\newtheorem{theorem}{Theorem}[section]
\newtheorem{lemma}[theorem]{Lemma}
\newtheorem{corollary}[theorem]{Corollary}
\newtheorem{proposition}[theorem]{Proposition}
\theoremstyle{definition}
\newtheorem{definition}[theorem]{Definition}
\newtheorem{example}[theorem]{Example}
\theoremstyle{remark}
\newtheorem{remark}[theorem]{Remark}
\numberwithin{equation}{section}
\newcommand{\mm}{\hspace{.5mm}}
\newcommand{\Hom}{\mathrm{Hom}}
\newcommand{\sA}{{\mathcal A}}
\newcommand{\sC}{{\mathcal C}}
\newcommand{\sD}{{\mathcal D}}
\newcommand{\sE}{{\mathcal E}}
\newcommand{\sF}{{\mathcal F}}
\newcommand{\sH}{{\mathcal H}}
\newcommand{\sI}{{\mathcal I}}
\newcommand{\sJ}{{\mathcal J}}
\newcommand{\sK}{{\mathcal K}}
\newcommand{\sM}{{\mathcal M}}
\newcommand{\sN}{{\mathcal N}}
\newcommand{\sP}{{\mathcal P}}
\newcommand{\sR}{{\mathcal R}}
\newcommand{\sS}{{\mathcal S}}
\newcommand{\A}{{\mathbb A}}
\newcommand{\G}{{\mathbb G}}
\newcommand{\bN}{{\mathbb N}}
\newcommand{\mS}{{\mathbb S}}
\newcommand{\Z}{{\mathbb Z}}
\renewcommand{\phi}{\varphi}
\renewcommand{\1}{{\mathbf{1}}}
\newcommand{\Div}{{\rm Div}}
\newcommand{\Spec}{\operatorname{Spec}}
\newcommand{\Char}{\operatorname{char}}
\newcommand{\id}{{\operatorname{id}}}
\newcommand{\Sch}{{\operatorname{\mathbf{Sch}}}}
\newcommand{\op}{{\text{\rm op}}}
\newcommand{\del}{\partial}
\newcommand{\Sm}{{\mathbf{Sm}}}
\newcommand{\hocolim}{\operatornamewithlimits{hocolim}}
\renewcommand{\lim}{\operatornamewithlimits{lim}}
\newcommand{\colim}{\operatornamewithlimits{colim}}
\newcommand{\Ho}{{\mathbf{Ho}\,}}
\newcommand{\sq}{\square}
 \newcommand{\Ab}{{\mathbf{Ab}}}
\newcommand{\SH}{{\operatorname{\sS\sH}}}
\newcommand{\eff}{{\mathop{eff}}}
\newcommand{\DM}{{DM}}
\newcommand{\ds}{{/\kern-3pt/}}
\newcommand{\hocofib}{{\mathop{\rm{hocofib}}}}
\newcommand{\EM}{{{EM}_{\A^1}}}
\newcommand{\Mot}{{\mathop{Mot}}}
\newcommand{\Mod}{{\operatorname{Mod}}}
\renewcommand{\:}{\kern-1.5pt:\kern-1.5pt}
\newcommand{\gr}{\mathbf{Gr}}
\newcommand{\MGL}{{\mathop{MGL}}}
\newcommand{\BP}{{\mathop{BP}}}
\renewcommand{\L}{\mathbb{L}}
\newcommand{\lci}{l.\,c.\,i.~}
\newcommand{\SP}{{\mathbf{SP}}}
\newcommand{\ph}{\text{ph}}
\newcommand{\N}{\mathbb{N}}
\begin{document}

\title[Quotients of $\MGL$, their slices and their geometric parts]{Quotients of $\mathbf{MGL}$, their slices and their geometric parts}

\author{Marc Levine}
\address{
Universit\"at Duisburg-Essen\\
Fakult\"at Mathematik\\
Thea-Leymann-Stra{\ss}e 9\\
45127 Essen\\
Germany}
\email{marc.levine@uni-due.de}
\author{Girja Shanker Tripathi}
\address{
Universit\"at Duisburg-Essen\\
Fakult\"at Mathematik\\
Thea-Leymann-Stra{\ss}e 9\\
45127 Essen\\
Germany}
\email{tripathigirja@gmail.com}

\thanks{Both authors wish to thank the Humboldt Foundation for financial support}

\subjclass{Primary 14C25, 19E15; Secondary 19E08 14F42, 55P42}

\begin{abstract}
Let $x_1, x_2,\ldots$ be a system of homogeneous polynomial generators for the Lazard ring $\L^*=MU^{2*}$ and let $\MGL_S$ denote Voevodsky's algebraic cobordism spectrum in the motivic stable homotopy category over a base-scheme $S$ \cite{VoevICM}. Take $S$ essentially smooth over a field $k$. Relying on the Hopkins-Morel-Hoyois isomorphism \cite{Hoyois}  of the  0th slice $s_0\MGL_S$ for Voevodsky's slice tower with  $\MGL_S/(x_1, x_2,\ldots)$   (after inverting the characteristic of $k$), Spitzweck \cite{Spitzweck10} computes the remaining slices of $\MGL_S$ as $s_n\MGL_S=\Sigma^n_TH\Z\otimes \L^{-n}$ (again, after inverting the characteristic of $k$). We apply Spitzweck's method to compute the slices of a quotient spectrum $\MGL_S/(\{x_i:i\in I\})$ for $I$ an arbitrary subset of $\N$, as well as the mod $p$ version $\MGL_S/(\{p, x_i:i\in I\})$ and localizations with respect to a system of homogeneous elements in $\Z[\{x_j:j\not\in I\}]$. In case $S=\Spec k$, $k$ a field of characteristic zero, we apply this to show that for $\sE$ a localization of a quotient of $\MGL$ as above, there is a natural isomorphism   for the theory with support
\[
\Omega_*(X)\otimes_{\L^{-*}}\sE^{-2*,-*}(k)\to \sE^{2m-2*, m-*}_X(M)
\]
for $X$ a closed subscheme of a smooth quasi-projective $k$-scheme $M$, $m=\dim_kM$.

\end{abstract}

\setcounter{tocdepth}{1}

\maketitle

\tableofcontents

\section*{Introduction} This paper has a two-fold purpose. We consider Voevodsky's slice tower on the motivic stable homotopy category $\SH(S)$ over a base-scheme $S$ \cite{VoevSlice}. For $\sE$ in $\SH(S)$, we have the $n$th layer $s_n\sE$ in the slice tower for $\sE$. Let  $\MGL$ denote Voevodsky's algebraic cobordism spectrum in $\SH(S)$  \cite{VoevICM} and let $x_1, x_2,\ldots$
be a system of homogeneous polynomial generators for the Lazard ring $\L_*$. Via the classifying map for the formal group law for $\MGL$, we may consider $x_i$ as an element of $\MGL^{2i,i}(S)$, and thereby as a map $x_i:\Sigma^{2i,i}\MGL\to \MGL$, giving the quotient $\MGL/(x_1, x_2,\ldots)$. Spitzweck \cite{Spitzweck10} shows how to build on the Hopkins-Morel-Hoyois
isomorphism \cite{Hoyois}
\[
\MGL/(x_1, x_2,\ldots)\cong   s_0\MGL
\]
to compute all the slices $s_n\MGL$ of $\MGL$. Our first goal here is to extend
Spitzweck's method to handle quotients of $\MGL$  by a subset of  $\{x_1, x_2,
\ldots\}$, as well as localizations with respect to a system of homogeneous elements in 
the ring generated by the remaining variables; we also consider quotients of such spectra by an integer. Some
of these spectra are Landweber  exact, and the slices are thus computable by the
results of Spitzweck on the slices of Landweber exact spectra
\cite{Spitzweck12}, but many of these, such as the truncated Brown-Peterson
spectra or Morava $K$-theory, are not. 

The second goal is to extend results of \cite{DaiLevine, LevineComp,
LevineExact}, which consider the ``geometric part'' $X\mapsto \sE^{2*,*}(X)$ of
the bi-graded cohomology defined by an oriented weak commutative ring
$T$-spectrum $\sE$ and raise the question: is the
classifying map
\[
\sE^*(k)\otimes_{\L^*}\Omega^*\to \sE^*
\]
 an isomorphism of oriented cohomology theories, that is, is the theory $\sE^*$
 a theory of  {\em rational type}  in the sense of Vishik \cite{Vishik}?
Starting with the case $\sE=\MGL$, discussed in \cite{LevineComp}, which
immediately yields the Landweber exact case,  we have answered this   affirmatively for  ``slice
effective'' algebraic $K$-theory in \cite{DaiLevine}, and extended to the case of
slice-effective covers of a Landweber exact theory in \cite{LevineExact}. In this paper,
we use our computation of the slices of a quotient of $\MGL$ to  show that the classifying map is an isomorphism for
the quotients and localizations of $\MGL$ described above. 

The paper is organized as follows: in \S\ref{general quotients} and \S\ref{sec:SlicesModuleSpectra}, we axiomatize
Spitzweck's method from \cite{Spitzweck10} to a more general setting. In \S\ref{general quotients} we give a description of quotients in a suitable symmetric monoidal model category in terms of a certain homotopy colimit. 
In \S\ref{sec:SlicesModuleSpectra}  we begin by recalling some basic facts and the slice tower and its construction. We then apply the results of \S\ref{general quotients} to the category of $\sR$-modules in a symmetric
monoidal model category (with some additional technical assumptions), developing
a method for computing the slices of an $\sR$-module $\sM$, assuming that $\sR$
and $\sM$ are effective and that the 0th slice $s_0\sM$ is of the form
$\sM/(\{x_i:i\in I\})$ for some collection  $\{ [x_i]\in \sR^{-2d_i, -d_i}(S),
d_i<0\}$ of elements in $\sR$-cohomology of the base-scheme $S$; see
theorem~\ref{thm:slices of effective spectra}. We also discuss localizations of
such $\sR$-modules and the mod $p$ case (corollary~\ref{cor:slices of localized
spectra} and corollary~\ref{cor:slices of quotient spectra}). We discuss the
associated slice spectral sequence for such $\sM$
and its convergence properties in \S\ref{sec:SliceSS}, and apply these
results to our examples of interest: truncated Brown-Peterson spectra, Morava
$K$-theory and connective Morava $K$-theory,  as well as the Landweber exact examples, the  Brown-Peterson spectra $\BP$ and the Johnson-Wilson spectra $E(n)$,  in
\S\ref{sec:SlicesTruncatedBPSpectra}.

The remainder of the paper discusses the classifying map from algebraic
cobordism $\Omega_*$ and proves our results on the rationality of certain
theories.  This is essentially taken from \cite{LevineExact}, but we need to
deal with a technical problem, namely,  that it is not at present clear if the
theories $[\MGL/(\{x_i:i\in I\})]^{2*,*}$ have a multiplicative structure. For
this reason, we extend the setting used in \cite{LevineExact} to theories that
are modules over ring-valued theories. This extension is taken up in
\S\ref{sec:modules} and we apply this theory to quotients and localizations of
$\MGL$ in \S\ref{sec:Apps}.

\section{Quotients and homotopy colimits  in a model category}
\label{general quotients}

In this section we consider certain quotients  in a model category  and give a description of these quotients as a homotopy colimit (see proposition~\ref{prop:zeroth slice}). This is an abstraction of the methods developed in \cite{Spitzweck10} for computing the slices of $\MGL$.

Let $(\sC, \otimes, 1)$ be a closed symmetric monoidal simplicial pointed model
category with cofibrant unit $1$. We assume that $1$ admits a fibrant
replacement $\alpha:1\to\1$ such that $\1$ is a $1$-algebra in $\sC$, that is, there is an
associative multiplication map $\mu_\1:\1\otimes \1\to \1$ such that
$\mu_1\circ(\alpha\otimes\id)$ and 
$\mu_1\circ(\id\otimes\alpha)$ are the respective multiplication isomorphisms $1\otimes
\1\to \1$, $\1\otimes1\to\1$.

For a cofibrant object $T$ in $\sC$, the  map $T\cong  T\otimes
1\xrightarrow{\id\otimes\alpha}  T\otimes\1$ is a cofibration and weak
equivalence. Indeed, the functor $T\otimes(-)$ preserves cofibrations, and also maps
that are both a cofibration and a weak equivalence, whence the assertion.

\begin{remark} \label{rem:Fibrant} We will be applying the results of this
section to the following situation: $\sM$ is a cofibrantly  generated symmetric
monoidal simplicial model category satisfying the monoid axiom \cite[definition
3.3]{SchwedeShipley}, $\sR$ is a commutative monoid in $\sM$, cofibrant in $\sM$
and $\sC$ is the category of $\sR$-modules in $\sC$, with model structure as in 
\cite[\S4]{SchwedeShipley}, that is, a map is a fibration or a weak equivalence
in $\sC$ if and only if it is so as a map in $\sM$, and cofibrations are determined
by the LLP with respect to acyclic fibrations. By  \cite[theorem
4.1(3)]{SchwedeShipley}, the category $\sR$-Alg of monoids in $\sC$ has
the structure of a cofibrantly generated model category, with fibrations and
weak equivalence those maps which become a fibration or weak equivalence in
$\sM$, and each cofibration in $\sR$-Alg is a cofibration in $\sC$. The unit 1
is $\sC$ is just $\sR$ and we may take $\alpha:1\to\1$ to be a fibrant
replacement in $\sR$-Alg.
\end{remark}

Let $\{x_i:T_i \to \1\ |\ i\in I\}$ 
be a set of maps with cofibrant sources $T_i$. We assign each $T_i$ an integer
 degree $d_i>0$.

Let $\1/(x_i)$ be the homotopy cofiber (i.e., mapping cone) of the map 
$x_i:\1\otimes T_i \to \1$ and let $p_i:\1\to \1/(x_i)$ be the canonical map.

Let $A=\{i_1,\ldots, i_k\}$ be a finite subset of $I$ and define $\1/(\{x_i :
i\in A\})$ as 
\[
\1/(\{x_i : i\in A\}):=\1/(x_{i_1})\otimes\ldots\otimes \1/(x_{i_k}).
\]
Of course, the object $\1/(\{x_i : i\in A\})$ depends on a choice of ordering of
the elements in $A$, but only up to a canonical symmetry isomorphism. We could
for example fix the particular choice by fixing a total order on $A$ and taking
the product in the proper order.The canonical maps $p_i$, $i\in I$ composed with
the map $1\to \1$ give rise to the canonical map
\[
p_I:1\to \1/(\{x_i : i\in A\})
\]
defined as the composition
\[
1\xrightarrow{\mu^{-1}}1^{\otimes k}\to \1^{\otimes
k}\xrightarrow{p_{i_1}\otimes\ldots\otimes p_{i_k}} \1/(\{x_i : i\in A\}).
\]

For finite subsets $A\subset B\subset I$, define the map
\[
\rho_{A\subset B}:\1/(\{x_i : i\in A\})\to \1/(\{x_i : i\in B\})
\]
as the composition
\begin{multline*}
\1/(\{x_i : i\in A\})\xrightarrow{\mu^{-1}}\1/(\{x_i : i\in A\})\otimes
1\\\xrightarrow{\id\otimes p_{B\setminus A}}
\1/(\{x_i : i\in A\})\otimes \1/(\{x_i : i\in B\setminus A\})\cong \1/(\{x_i :
i\in B\}).
\end{multline*}
where the last isomorphism is again the symmetry isomorphism.

Because $\sC$ is a symmetric monoidal category with unit $1$, we have a
well-defined functor from the category $\sP_\text{fin}(I)$ of finite subsets of
$I$ to $\sC$:
\[
\1/(-):\sP_\text{fin}(I)\to \sC
\]
sending $A\subset I$ to $\1/(\{x_i : i\in A\})$ and sending each inclusion
$A\subset B$ to $\rho_{A\subset B}$.

\begin{definition} The object $\1/(\{x_i : i\in I\})$ of $\sC$ is defined by
\[
\1/(\{x_i\})= \hocolim_{A\in \sP_\text{fin}(I)} \1/(\{x_i : i\in A\}).
\]
More generally, for $M\in \sC$, we define $M/(\{x_i : i\in I\})$ as
\[
M/(\{x_i : i\in I\}):=\1/(\{x_i : i\in I\})\otimes QM,
\]
where $QM\to M$ is a cofibrant replacement for $M$. In case the index set $I$ is
understood, we often write these simply as $\1/(\{x_i \})$ or $M/(\{x_i \})$.
\end{definition}

\begin{remark} 1. The object $\1/(x_i)$ is  
cofibrant and hence the objects  $\1/(\{x_i : i\in A\})$ are cofibrant for all
finite sets $A$. As a pointwise cofibrant diagram has cofibrant homotopy colimit
\cite[corollary 14.8.1, example 18.3.6, corollary 18.4.3]{Hirschhorn},
$\1/(\{x_i : i\in I\})$ is cofibrant. Thus $M/(\{x_i : i\in I\}):=\1/(\{x_i :
i\in I\})\otimes QM$ is also cofibrant.\\
2. We often select a single cofibrant object $T$ and take $T_i:=T^{\otimes d_i}$
for certain integers $d_i>0$. As $T$ is cofibrant, so is $T^{\otimes d_i}$. In
this case we set $\deg T=1$, $\deg T^{\otimes d_i}=d_i$.
\end{remark}

We let $[n]$ denote the set $\{0,\ldots, n\}$ with the standard order and $\Delta$ the category with objects $[n]$, $n=0, 1,\ldots,$ and morphisms the order-preserving maps of sets. For a small category $A$ and a functor $F:A\to \sC$, we let $\hocolim_AF_*$
denote the standard simplicial object of $\sC$ whose geometric realization is
$\hocolim_AF$, that is 
\[
\hocolim_AF_n=\coprod_{\sigma:[n]\to A}F(\sigma(0)).
\]

\begin{lemma} Let $\{x_i:T_i\to \1 : i\in I_1\}$, $\{x_i:T_i\to \1 : i\in I_2\}$
be two sets of maps in $\sC$, with cofibrant sources $T_i$. Then there is a
canonical isomorphism
\[
 \1/(\{x_i: i\in I_1\amalg I_2\})\cong \1/(\{x_i: i\in I_1\})\otimes \1/(\{x_i:
i\in I_2\}).
\]
\end{lemma}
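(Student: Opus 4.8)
The plan is to reduce the statement to two formal facts: that $\sP_\text{fin}(I_1\amalg I_2)$ splits as a product category, and that homotopy colimits in $\sC$ commute with the monoidal product and with each other. First I would observe that the assignment $(A_1, A_2)\mapsto A_1\amalg A_2\subset I_1\amalg I_2$ is an isomorphism of categories
\[
\sP_\text{fin}(I_1)\times\sP_\text{fin}(I_2)\iso\sP_\text{fin}(I_1\amalg I_2),
\]
since a finite subset of $I_1\amalg I_2$ is precisely a pair consisting of a finite subset of $I_1$ and one of $I_2$, and inclusions correspond. Under this isomorphism I would identify the functor $\1/(-)\colon\sP_\text{fin}(I_1\amalg I_2)\to\sC$ with the "external tensor product" $\1/(-)\boxtimes\1/(-)$, i.e. the functor $(A_1,A_2)\mapsto\1/(\{x_i:i\in A_1\})\otimes\1/(\{x_i:i\in A_2\})$. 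Indeed, for $A=A_1\amalg A_2$ one may compute $\1/(\{x_i:i\in A\})=\bigotimes_{i\in A}\1/(x_i)$ by grouping the tensor factors indexed by $A_1$ before those indexed by $A_2$; the resulting identifications are compatible with the transition maps $\rho_{A\subset B}$ up to the coherence isomorphisms of $(\sC,\otimes,1)$. By Mac Lane's coherence theorem for symmetric monoidal categories these identifications are canonical and natural in $(A_1,A_2)$, so they assemble into an honest natural isomorphism of diagrams $\sP_\text{fin}(I_1)\times\sP_\text{fin}(I_2)\to\sC$.

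Next I would invoke the two standard properties of homotopy colimits in the simplicial model category $\sC$. The first is the Fubini isomorphism $\hocolim_{A_1\times A_2}G\cong\hocolim_{A_1}\big(a_1\mapsto\hocolim_{A_2}(a_2\mapsto G(a_1,a_2))\big)$, valid for any functor $G$ from a product of small categories. The second is that for each cofibrant object $T$ the endofunctor $T\otimes(-)$ is left Quillen — it is a left adjoint preserving cofibrations and acyclic cofibrations, as already noted in this section — and hence preserves homotopy colimits of pointwise cofibrant diagrams. Applying these to $G=\1/(-)\boxtimes\1/(-)$, whose values $\1/(\{x_i:i\in A_1\})\otimes\1/(\{x_i:i\in A_2\})$ are cofibrant (tensor products of the cofibrant objects $\1/(\{x_i:i\in A\})$), one gets
\[
\hocolim_{\sP_\text{fin}(I_1)\times\sP_\text{fin}(I_2)}\big(\1/(-)\boxtimes\1/(-)\big)\cong\hocolim_{\sP_\text{fin}(I_1)}\1/(-)\ \otimes\ \hocolim_{\sP_\text{fin}(I_2)}\1/(-),
\]
which by the definition of $\1/(\{x_i\})$ and the identification of diagrams above is exactly the asserted isomorphism $\1/(\{x_i:i\in I_1\amalg I_2\})\cong\1/(\{x_i:i\in I_1\})\otimes\1/(\{x_i:i\in I_2\})$. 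Naturality with respect to the canonical maps from the unit — i.e. compatibility of this isomorphism with $p_{I_1\amalg I_2}$ and $p_{I_1}\otimes p_{I_2}$ — follows since all of these are built from the $p_i$ and the multiplication isomorphisms $\mu$ in a manner respecting the grouping of factors.

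The main obstacle I anticipate is not any single homotopy-theoretic step but the bookkeeping in the first paragraph: one must check that the chosen regroupings of tensor factors fit together into a genuine natural transformation of diagrams indexed by $\sP_\text{fin}(I_1)\times\sP_\text{fin}(I_2)$, not merely a family of pointwise isomorphisms, and this is exactly where coherence of the symmetric monoidal structure is used. The homotopy-colimit manipulations are formal; if one prefers to avoid citing a Fubini theorem, they can be carried out explicitly with the bar-type simplicial model $\hocolim_A F_n=\coprod_{\sigma\colon[n]\to A}F(\sigma(0))$ recalled above, via the isomorphism of bisimplicial objects $\big(\hocolim_{A_1}F_1\big)_\bullet\otimes\big(\hocolim_{A_2}F_2\big)_\bullet\cong\big(\hocolim_{A_1\times A_2}(F_1\boxtimes F_2)\big)_{\bullet,\bullet}$, followed by passage to the diagonal and the fact that realization of bisimplicial objects commutes with $\otimes$ on pointwise cofibrant objects.
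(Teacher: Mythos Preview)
Your proposal is correct and follows essentially the same strategy as the paper: both proofs rest on the identification $\sP_\text{fin}(I_1\amalg I_2)\cong\sP_\text{fin}(I_1)\times\sP_\text{fin}(I_2)$ together with the compatibility of $\hocolim$ with $\otimes$ over a product indexing category. The paper carries this out directly via the explicit bar model---precisely the bisimplicial argument you sketch in your final paragraph---obtaining an honest isomorphism in $\sC$ rather than passing through Fubini and left Quillen functors; your primary route via those abstract facts is a perfectly good repackaging of the same computation.
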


\begin{proof} The category $\sP_\text{fin}(I_1\amalg I_2)$ is clearly equal to
$\sP_\text{fin}(I_1)\times \sP_\text{fin}(I_2)$. 
For functors $F_i:\sA_i\to \sC$, $i=1,2$,  $[\hocolim_{\sA_1\times
\sA_2}F_1\otimes F_2]_*$ is the diagonal simplicial space associated to the
bisimplicial space $(n,m)\mapsto [\hocolim_{\sA_1}F_1]_n\otimes
 [\hocolim_{\sA_2}F_2]_m$. Thus 
 \[
\hocolim_{\sA_1\times \sA_2}F_1\otimes F_2\cong
\hocolim_{\sA_2}[\hocolim_{\sA_1}F_1]\otimes F_2.
\]
This gives us the isomorphism
\begin{align*}
\1/&(\{x_i: i\in I_1\amalg I_2\})\\&=\hocolim_{(A_1, A_2)\in
\sP_\text{fin}(I_1)\times \sP_\text{fin}(I_2)} 
\1/(\{x_i : i\in A_1\})\otimes \1/(\{x_i : i\in A_2\})\\
&\cong \hocolim_{A_1\in \sP_\text{fin}(I_1)} 
\1/(\{x_i : i\in A_1\})\otimes\hocolim_{A_2\in \sP_\text{fin}(I_2)}  \1/(\{x_i :
i\in A_2\})\\
&=\1/(\{x_i: i\in I_1\})\otimes \1/(\{x_i: i\in I_2\})
\end{align*}
\end{proof}

\begin{remark}\label{rem:DoubleQuot}  Via this lemma, we have the isomorphism
for all $M\in \sC$,
\[
M/(\{x_i: i\in I_1\amalg I_2\})\cong (M/(\{x_i: i\in I_1\})/(\{x_i: i\in I_2\}).
\]
\end{remark}

Let $\sI$ be the category of formal monomials in 
$\{x_i\}$, that is, the category of maps $N:I\to \bN$, $i\mapsto N_i$,  such that $N_i=0$ for all
but finitely many $i\in I$, and with a unique map $N\to M$ if $N_i\ge M_i$ for
all $i\in I$. As usual, the monomial in the $x_i$ corresponding to a given $N$ is
$\prod_{i\in I}x_i^{N_i}$, written $x^N$.  The index $N=0$, corresponding to $x^0=1$, is
the final object of $\mathcal{I}$. 

Take an $i\in I$. For $m> k\ge0$ integers, define the map
\[
\times x_i^{m-k}:\1\otimes T_i^{\otimes m}\to \1\otimes T_i^{\otimes k}
\]
as the composition 
\[
\1\otimes T_i^{\otimes m}=\1\otimes T_i^{\otimes m-k}\otimes  T_i^{\otimes k}
\xrightarrow{\id_\1\otimes x_i^{\otimes m-k}\otimes \id_{ T_i^{\otimes
k}}}\1^{\otimes m-k+1}\otimes  T_i^{\otimes k}
\xrightarrow{\mu\otimes\id}\1\otimes  T_i^{\otimes k}.
\]
In case $k=0$, we use $\1$ instead of $\1\otimes 1$ for the target; we define
$\times x^0$ to be the identity map. The associativity of the maps $\mu_\1$ shows
that $\times x_i^{m-k}\circ \times x_i^{n-m}=\times x_i^{n-k}$, hence the maps
$\times x_i^n$ all commute with each other. 

Now suppose we have a monomial in the $x_i$; to simplify the notation, we write
the indices occurring in the monomial as $\{1,\ldots, r\}$ rather than
$\{i_1,\ldots, i_r\}$. This gives us the monomial
$x^N:=x_1^{N_1}\cdot\ldots\cdot x_r^{N_r}$.  Define  
\[
T_*^N:=\1\otimes T_1^{\otimes N_1}\otimes\ldots\otimes \1\otimes T_r^{\otimes
N_r}\otimes\1;
\]
in case $N_i=0$,  we replace    $\ldots\otimes\1\otimes 1\otimes \1\otimes
T_{i+1}^{\otimes M_{i+1}}\otimes\ldots$ with  
 $\ldots \otimes\1\otimes T_{i+1}^{\otimes M_{i+1}}\otimes\ldots$, and we set
$T_*^0:=\1$.

Let $N\to M$ be a map in $\sI$, that is $N_i\ge M_i\ge0$ for all $i$. We again
write the relevant index set as $\{1,\ldots, r\}$. Define the map
\[
\times x^{N-M}:  T_*^N\to   T_*^M
\]
as the composition
\[
T_*^N\xrightarrow{  \bigotimes_{j=1}^r \times x_j^{N_j-M_j}}
 \1\otimes T_1^{\otimes M_1}\otimes\ldots\otimes  \1 \otimes T_r^{\otimes M_r\otimes\1}
\xrightarrow{\mu_M}   T_*^{\otimes M};
\]
the map $\mu_M$ is a composition of $\otimes$-product of multiplication maps $\mu_\1:\1\otimes
\1\to \1$, with these occurring in those spots with $M_j=0$.  In case
$N_i=M_i=0$, we simply delete the term $\times x_i^0$ from the expression.

The fact that the maps $\mu_\1$ satisfy associativity yields  the relation
\[
\times x^{M-K}\circ \times x^{N-M}=\times x^{N-K}
\]
and thus the  maps $\times x^{N-M}$ all commute with each other.

Defining $\sD_x(N):=T_*^N$ and $\sD_x(N\to M)=\times x^{N-M}$ gives us the
$\sI$-diagram 
\[
\sD_x:\sI\to \sC.
\]

We  consider the following full subcategories of $\sI$. 
For a monomial
$M$  let  $\sI_{\geq M}$ denote the subcategory of monomials which are divisible
by $M$,
and for a positive integer $n$, recalling that we have assigned each $T_i$ a
positive integral degree $d_i$,  let   $\sI_{\deg \geq n}$ denote the
subcategory
of monomials of degree at least $n$, where the degree of 
$N:=(N_1,\ldots, N_k)$ is $N_1d_1+\cdots+N_kd_k$.  One defines similarly
the 
full subcategories $\sI_{> M}$ and $\sI_{\deg > n}$.

Let $\sI^\circ$ be the full subcategory of 
$\sI$ of  monomials $N\neq0$
and $\sI^\circ_{\le1}\subset \sI^\circ$ be the full subcategory 
of monomials $N$ for which $N_i\le 1$ for all $i$. 
We have the corresponding subdiagrams $\sD_x:\sI^\circ\to\sC$ and
$\sD_x:\sI^\circ_{\le1}\to\sC$
of $\sD_x$. For $J\subset I$ a subset, we have the corresponding full
subcategories $\sJ\subset \sI$, $\sJ^\circ\subset \sI^\circ$ and $\sJ^\circ_{\le
1}\subset \sI^\circ_{\le 1}$ and corresponding subdiagrams $\sD_x$. If the
collection of maps $x_i$ is understood, we write simply $\sD$ for $\sD_x$.

Let $F:A\to \sC$ be a functor, $a$ an object in $\sC$, $c_a:A\to \sC$ the constant
functor with value $a$  and $\phi:F\to c_a$ a natural transformation. Then $\phi$
induces a canonical map $\tilde\phi:\hocolim_AF\to a$ in $\sC$. As in the proof
of \cite[Proposition 4.3]{Spitzweck10}, let $C(A)$ be the category $A$ with a
final object $*$ adjoined and $C(F,\phi):C(A)\to \sC$ the functor with value $a$
on $*$, with restriction to $A$ being $F$ and which sends the unique map $y\to
*$ in $C(A)$, $y\in A$, to $\phi(y)$. Let  $[0,1]$ be the category with objects
$0, 1$ and a unique non-identity morphism, $0\to 1$ and let $C(A)^\Gamma$ be the
full subcategory of $C(A)\times[0,1]$ formed by removing the object $*\times 1$.
We extend $C(F,\phi)$ to a functor $C(F,\phi)^\Gamma:C(A)^\Gamma\to \sC$ by
$C(F,\phi)^\Gamma(y\times 1)=pt$, where $pt$ is the initial/final object in
$\sC$.

\begin{lemma}\label{lem:Cofib} There is a natural isomorphism in $\sC$
\[
\hocolim_{C(A)^\Gamma}C(F,\phi)^\Gamma\cong \hocofib(\tilde\phi:\hocolim_AF\to
a).
\]
\end{lemma}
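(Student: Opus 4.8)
I would prove Lemma~\ref{lem:Cofib} by identifying both sides as the realization of explicitly described simplicial objects and matching them degreewise. The key observation is that the homotopy cofiber of $\tilde\phi\colon\hocolim_A F\to a$ is computed as the realization of the simplicial object obtained by collapsing, in each simplicial degree, the ``$*$-part'' of the bar construction for $C(F,\phi)$. More precisely, I would first recall that $\hocolim_{C(A)}C(F,\phi)$ has $n$-simplices $\coprod_{\sigma\colon[n]\to C(A)}C(F,\phi)(\sigma(0))$, and that the strings $\sigma$ factoring through $*$ contribute exactly the ``cone'' on $\hocolim_A F$; quotienting out the copies of $a$ sitting over the string that is constant at $*$ (equivalently, over the terminal vertex) produces $\hocofib(\tilde\phi)$. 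The device of $C(A)^\Gamma$ and the assignment $C(F,\phi)^\Gamma(y\times 1)=pt$ is precisely what effects this quotient homotopy-coherently: adjoining the $0\to1$ morphism in the $[0,1]$-direction and sending everything over $1$ to the initial/final object $pt$ glues a (contractible) cone onto the offending part.

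Concretely, the main step is to write down the simplicial object $[\hocolim_{C(A)^\Gamma}C(F,\phi)^\Gamma]_\bullet$ and perform a bookkeeping of the strings $[n]\to C(A)^\Gamma$. A string either lands entirely in $C(A)\times\{1\}\setminus(*\times1)=A\times\{1\}$, or entirely in $C(A)\times\{0\}\cong C(A)$, or crosses from the $0$-level to the $1$-level at some unique position. Strings of the first and third type have source vertex $\sigma(0)$ either in $A\times\{1\}$ (contributing $pt$, hence nothing) or at a vertex where $C(F,\phi)^\Gamma$ still takes its ``$F$ or $a$'' value but where the presence of the crossing forces identification along $pt$. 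Carrying this out, one finds that in each degree the coproduct over $C(A)^\Gamma$-strings reassembles, after discarding the $pt$-summands and using that $pt$ is initial, into exactly $[\hocolim_A F]_\bullet$ in the ``interior'' together with a single copy of $a$ and a single copy of $pt$ attached in the pattern of a mapping cone on $\tilde\phi$. Realizing, and using that geometric realization commutes with the relevant (finite, in each degree) colimits and preserves the homotopy cofiber sequence, gives the asserted natural isomorphism.

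I would organize the argument so as to avoid re-deriving the bar-construction formula for homotopy cofibers from scratch: instead, recall that for a map $f\colon Y\to a$ in a pointed simplicial model category, $\hocofib(f)\cong\hocolim(a\leftarrow Y\to pt)$, i.e. the realization of the evident $[0,1]$-indexed (two-term) diagram, and that this is natural in $(f\colon Y\to a)$. Applying this with $Y=\hocolim_A F$ and $f=\tilde\phi$, and then commuting the outer $[0,1]$-homotopy colimit past the inner $A$-homotopy colimit (using that $\hocolim$ over a product is the diagonal of the bisimplicial object, exactly as in the proof of the previous lemma), one rewrites $\hocofib(\tilde\phi)$ as a homotopy colimit over a category built from $A$, $*$, and $[0,1]$ — which is visibly $C(A)^\Gamma$ once one checks that the object $*\times1$ is correctly absent (it would correspond to the image of the terminal vertex under the $1\to$ leg, which is $pt$, so removing it and instead recording $pt$ on the whole $1$-level is harmless). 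Naturality of all the identifications in $(F,\phi)$ is then automatic from the naturality of each step.

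**Main obstacle.** The conceptual content is routine, but the genuine difficulty is the combinatorial bookkeeping: one must match the simplicial bar constructions on the two sides \emph{on the nose} (or at least up to a transparent natural isomorphism), keeping careful track of which strings $[n]\to C(A)^\Gamma$ produce $pt$-summands, which produce $a$-summands, and which produce the $F$-summands of $\hocolim_A F$, and verifying that the simplicial structure maps correspond. A subtler point is to be sure that the face maps hitting the ``crossing'' position in a string interact correctly with the collapse to $pt$ — this is exactly why the auxiliary category $C(A)^\Gamma$ is used rather than a naive quotient — and that the resulting identification is natural in $\phi$, not merely an abstract isomorphism. Once the degreewise matching is set up correctly, taking geometric realizations and invoking its compatibility with homotopy cofiber sequences closes the argument.
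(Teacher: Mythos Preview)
Your plan is correct in outline, but it is considerably more laborious than the paper's proof, and your second approach has a small gap.

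The paper avoids the degreewise string-counting entirely by working at the level of nerves. The observation is that $\sN(C(A))\cong \text{Cone}(\sN(A),*)$, while the full subcategory $A\times[0,1]\subset C(A)^\Gamma$ has nerve $\sN(A)\times\Delta[1]$; gluing these along $\sN(A)$ gives $\sN(C(A)^\Gamma)$ as a pushout of simplicial sets. Since the bar-construction simplicial object $[\hocolim_B G]_\bullet$ is functorial in the pair $(\sN(B),G)$ in the evident way, this nerve pushout immediately transports to a pushout of simplicial objects
\[
C(\hocolim_AF,\,pt)\ \longleftarrow\ \hocolim_AF\ \longrightarrow\ C(\hocolim_AF,\,a),
\]
whose realization is on the nose the mapping cone of $\tilde\phi$. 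No case analysis on strings, no tracking of face maps across the ``crossing position'', is needed.

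Your first approach (degreewise matching) would get there, but the bookkeeping you flag as the main obstacle is exactly what the nerve-pushout argument circumvents. Your second approach (write $\hocofib(\tilde\phi)$ as a span hocolim and commute with $\hocolim_A$) is conceptually close to the paper's, but the appeal to ``$\hocolim$ over a product is the diagonal of the bisimplicial object'' does not literally apply: $C(A)^\Gamma$ is not a product category, and the relevant diagram is not a product diagram (the values $a$ and $pt$ are constant in the $A$-direction while $F$ varies). What actually makes the commutation work is precisely the nerve decomposition the paper writes down; your parenthetical ``once one checks that $*\times 1$ is correctly absent\ldots so removing it is harmless'' is where this content is hiding, and it deserves to be the argument rather than an afterthought.
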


\begin{proof} For a category $\sA$ we let $\sN(\sA)$ denote the simplicial nerve of $\sA$. We have an isomorphism of simplicial sets $\sN(C(A))\cong \text{Cone}(\sN(A), *)$, where $\text{Cone}(\sN(A), *)$ is the cone over $\sN(A)$ with vertex $*$.
Similarly, the full subcategory $A\times[0,1]$ of $C(A)^\Gamma$ has nerve
isomorphic to $\sN(A)\times\Delta[1]$. This gives an isomorphism of
$\sN(C(A)^\Gamma)$ with the push-out in the diagram
\[
\xymatrix{
\sN(A)\ar@{^(->}[r]\ar@{^(->}[d]_{\id\times\delta_0}&\text{Cone}(\sN(A), *)\\
\sN(A)\times\Delta[1]}
\]
This in turn gives an isomorphism of the simplicial object
$\hocolim_{C(A)^\Gamma}C(F,\phi)^\Gamma_*$  with the pushout in the diagram
\[
\xymatrix{
\hocolim_AF\ar@{^(->}[r]\ar@{^(->}[d]&C(\hocolim_AF,a)\\
C(\hocolim_AF, pt).}
\]
This gives the desired isomorphism.
\end{proof}

\begin{lemma}\label{lem:cofibration} Let $J\subset K\subset I$ be finite subsets
of $I$. Then the map 
\[
\hocolim_{\sJ_{\le1}^\circ}\sD_x\to \hocolim_{\sK_{\le 1}^\circ}\sD_x
\]
induced by the inclusion $J\subset K$ is a cofibration  in $\sC$.
\end{lemma}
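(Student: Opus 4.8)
The plan is to recognize the map as the geometric realization of a Reedy cofibration of simplicial objects of $\sC$. Write $X$ (resp.\ $X'$) for the standard simplicial object with $X_n=\coprod_{\sigma\colon[n]\to\sK^\circ_{\le1}}\sD_x(\sigma(0))$ (resp.\ the analogous object for $\sJ^\circ_{\le1}$), so that $|X|=\hocolim_{\sK^\circ_{\le1}}\sD_x$, $|X'|=\hocolim_{\sJ^\circ_{\le1}}\sD_x$, and the map in question is $|X'|\to|X|$. Since geometric realization is a left Quillen functor for the Reedy model structure on simplicial objects of $\sC$ \cite{Hirschhorn}, it suffices to show that $X'\to X$ is a Reedy cofibration.

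First I would reorganize the combinatorics. Identifying each object of $\sI^\circ_{\le1}$ with the nonempty finite subset of $I$ that is the support of the corresponding monomial, $\sJ^\circ_{\le1}$ becomes the full subcategory of $\sK^\circ_{\le1}$ on the subsets contained in $J$, and I claim it is a \emph{cosieve}: a morphism $N\to M$ in $\sI$ means $N_i\ge M_i$ for all $i$, so $\supp(M)\subseteq\supp(N)$, whence $\supp(N)\subseteq J$ forces $\supp(M)\subseteq J$. Consequently, for any functor $\sigma\colon[n]\to\sK^\circ_{\le1}$, that is, a chain $N^0\to\cdots\to N^n$ (necessarily with $\supp(N^0)\supseteq\cdots\supseteq\supp(N^n)$), the whole chain lies in $\sJ^\circ_{\le1}$ precisely when its initial vertex $N^0$ does.

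Next I would observe that all the simplicial operators on $X$ act through the nerve $\sN(\sK^\circ_{\le1})$ and fix the initial vertex of a simplex, except for $d_0$, which replaces $N^0\to N^1\to\cdots$ by $N^1\to\cdots$; but $\supp(N^1)\subseteq\supp(N^0)$, so by the cosieve property the new initial vertex still lies in $\sJ^\circ_{\le1}$. Hence $X'_n$ is exactly the sub-coproduct of $X_n$ indexed by $\{\sigma : \sigma(0)\in\sJ^\circ_{\le1}\}$, and it is stable under all simplicial operators. Likewise, since degeneracies fix initial vertices, the $n$-th latching object $L_nX$ is the sub-coproduct indexed by the degenerate $n$-simplices of $\sN(\sK^\circ_{\le1})$, and similarly for $L_nX'$. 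Therefore the relative latching map $X'_n\amalg_{L_nX'}L_nX\to X_n$ is again a sub-coproduct inclusion, its complementary summands being indexed by the nondegenerate $n$-simplices of $\sN(\sK^\circ_{\le1})$ with initial vertex outside $\sJ^\circ_{\le1}$. An inclusion $\coprod_B a_\sigma\to\coprod_{B\amalg C}a_\sigma$ of a sub-coproduct is a cofibration whenever $\coprod_C a_\sigma$ is cofibrant, so it remains to note that each summand $\sD_x(\sigma(0))=T_*^{N^0}$ is a $\otimes$-product of the cofibrant objects $T_i$ with copies of $\1$; here $\1$ is cofibrant since $1$ is cofibrant and $\alpha\colon1\to\1$ is a trivial cofibration, and $\otimes$-products of cofibrant objects are cofibrant by the pushout-product axiom. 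This shows $X'\to X$ is a Reedy cofibration, and realizing it gives the assertion.

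The argument is formal once assembled; the part demanding care is the bookkeeping of the last two steps --- checking that the distinguished set of simplices is closed under every face and degeneracy operator (the cosieve condition is precisely what handles $d_0$) and identifying the relative latching maps as sub-coproduct inclusions with cofibrant complementary summands. Everything else is the stability of cofibrations under coproducts together with the left Quillen property of geometric realization.
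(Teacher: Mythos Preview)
Your argument is correct and follows essentially the same route as the paper: both reduce to checking that the induced map of simplicial objects is a Reedy cofibration by identifying the relative latching map in each degree as a sub-coproduct inclusion with cofibrant complementary summands, then invoke that geometric realization preserves Reedy cofibrations. Your explicit use of the cosieve property of $\sJ^\circ_{\le1}\subset\sK^\circ_{\le1}$ is a nice way to organize the bookkeeping that the paper handles more tersely via the identity $\sN(\sK_{\le1}^\circ)_n^{\deg}\cap \sN(\sJ_{\le1}^\circ)_n=\sN(\sJ_{\le1}^\circ)_n^{\deg}$.
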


\begin{proof} We give the category of simplicial objects in $\sC$,
$\sC^{\Delta^\op}$, the Reedy model structure, using the standard structure of a
Reedy category on $\Delta^\op$. By \cite[theorem 19.7.2(1), definition
19.8.1(1)]{Hirschhorn}, it suffices to show that 
\[
\hocolim_{\sJ_{\le1}^\circ}\sD_*\to \hocolim_{\sK_{\le 1}^\circ}\sD_*
\]
is a cofibration in $\sC^{\Delta^\op}$, that is, for each $n$, the map
\[
\phi_n:\hocolim_{\sJ_{\le1}^\circ}\sD_n\amalg_{L^n\hocolim_{\sJ_{\le1}^\circ}
\sD_*}L^n\hocolim_{\sK_{\le1}^\circ}\sD_*\to \hocolim_{\sK_{\le1}^\circ}\sD_n
\]
is a cofibration in $\sC$, where $L^n$ is the $n$th latching space. 

We note that 
\[
\hocolim_{\sJ_{\le1}^\circ}\sD_n=\bigvee_{\sigma\in
\sN(\sJ_{\le1}^\circ)_n}D(\sigma(0))
\]
where we view $\sigma\in \sN(\sJ_{\le1}^\circ)_n$ as a functor $\sigma:[n]\to
\sJ_{\le1}^\circ$; we have  a similar description of
$\hocolim_{\sK_{\le1}^\circ}\sD_n$. The latching space is
\[
L^n\hocolim_{\sJ_{\le1}^\circ}\sD_*=\bigvee_{\sigma\in
\sN(\sJ_{\le1}^\circ)_n^{deg}}D(\sigma(0)),
\]
where $\sN(\sJ_{\le1}^\circ)_n^{deg}$ is the subset of $\sN(\sJ_{\le1}^\circ)_n$
consisting of those $\sigma$ which contain an identity morphism;
$L^n\hocolim_{\sK_{\le1}^\circ}\sD_*$ has a similar description. The maps
\begin{gather*}
L^n\hocolim_{\sJ_{\le1}^\circ}\sD_*\to \hocolim_{\sJ_{\le1}^\circ}\sD_n,
L^n\hocolim_{\sJ_{\le1}^\circ}\sD_*\to L^n\hocolim_{\sK_{\le1}^\circ}\sD*, \\
L^n\hocolim_{\sK_{\le1}^\circ}\sD_*\to \hocolim_{\sK_{\le1}^\circ}\sD_n, 
\hocolim_{\sJ_{\le1}^\circ}\sD_n\to \hocolim_{\sK_{\le1}^\circ}\sD_n
\end{gather*}
 are
the  unions of identity maps on $D(\sigma(0))$ over the  respective inclusions 
 of the index sets. As $\sN(\sK_{\le1}^\circ)_n^{deg}\cap \sN(\sJ_{\le1}^\circ)_n=\sN(\sJ_{\le1}^\circ)_n^{deg}$, we have
\[
\hocolim_{\sJ_{\le1}^\circ}\sD_n\amalg_{L^n\hocolim_{\sJ_{\le1}^\circ}\sD_*}
L^n\hocolim_{\sK_{\le1}^\circ}\sD_*\cong \hocolim_{\sJ_{\le1}^\circ}\sD_n\bigvee
C,
\]
where 
\[
C=\bigvee_{\sigma\in \sN(\sK_{\le1}^\circ)_n^{deg}\setminus
\sN(\sJ_{\le1}^\circ)_n^{deg}}D(\sigma(0)),
\]
and the map to $ \hocolim_{\sK_{\le1}^\circ}\sD_n$ is the evident inclusion. As $D(N)$ is cofibrant for all $N$, 
this map is clearly a cofibration,  completing the proof.
\end{proof}

We have the $n$-cube $\sq^n$, the category associated to the partially ordered
set of subsets of $\{1,\ldots, n\}$, ordered under inclusion, and the punctured $n$-cube
$\sq^n_0$ of proper subsets.   We have the two inclusion functors $i_n^+,
i_n^-:\sq^{n-1}\to \sq^n$, $i_n^+(I):=I\cup\{n\}$, $i_n^-(I)=I$ and  the natural transformation $\psi_n:i_n^-\to i_n^+$ given as the collection of inclusions $I\subset
I\cup\{n\}$. The functor $i_n^-$ induces the functor
$i^-_{n0}:\sq^{n-1}\to \sq^n_0$.

For a functor $F:\sq^n\to \sC$, we have the iterated homotopy cofiber,
$\hocofib_nF$, defined inductively as the homotopy cofiber of
$\hocofib_{n-1}(F(\psi_n)):\hocofib(F\circ i_n^-)\to \hocofib(F\circ i_n^+)$.
Using this inductive construction, it is easy to define a natural isomorphism
$\hocofib_nF\cong \hocolim_{\sq^{n+1}_0}\hat{F}$, where $\hat{F}\circ
i^-_{n+10}=F$ and $\hat{F}(I)=pt$ if $n\in I$.

 The following result is proved in 
\cite[Lemma 4.2 and Proposition 4.3]{Spitzweck10}.

\begin{lemma}\label{lem:zeroth slice} Assume that $I$ is countable. Then
there is a canonical isomorphism in $\Ho\sC$
\[
\1/(\{x_i \ |\ i\in I\}) \cong \hocofib[\hocolim_{\sI^\circ} \sD_x \to
\hocolim_\sI\sD_x]
\]
\end{lemma}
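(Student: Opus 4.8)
The plan is to prove the isomorphism first for a finite index set and then to pass to the homotopy colimit over all finite subsets of $I$; this is the strategy of \cite[Lemma 4.2, Proposition 4.3]{Spitzweck10}, which I outline.

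First I would treat a fixed finite $A\subset I$, say $|A|=n$. Since $\1/(\{x_i:i\in A\})=\bigotimes_{i\in A}\1/(x_i)$ is a tensor product of mapping cones, it is the total (iterated) homotopy cofiber $\hocofib_n F_A$ of the $n$-cube $F_A\colon\sq^A\to\sC$ obtained by tensoring, over $i\in A$, the one-cubes $(\1\otimes T_i\xrightarrow{x_i}\1)$, so that $F_A(S)=\bigotimes_{i\in S}\1\otimes\bigotimes_{i\in A\setminus S}(\1\otimes T_i)$ with face maps assembled from the $x_i$ and the symmetry isomorphisms. The key claim is that, naturally in $A$ and up to coherent weak equivalences built from $\alpha\colon1\to\1$ and $\mu_\1$, the restriction of $\sD_x$ to the full subcategory $\sA_{\le1}\subset\sA$ is identified with $F_A$ precomposed with the complementation isomorphism $S\mapsto A\setminus S\colon\sq^A\xrightarrow{\sim}\sA_{\le1}$; under this identification the punctured cube $\sq^A_0$ goes to $\sA^\circ_{\le1}$, and the top vertex $A=\{1,\dots,n\}$ (which is terminal in $\sq^A$, so $\hocolim_{\sq^A}F_A\simeq F_A(A)\simeq\1$) goes to the terminal object $0$ of $\sA_{\le1}$ (where $\hocolim_{\sA_{\le1}}\sD_x\simeq\sD_x(0)\simeq\1$). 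Granting this, by the description of the iterated homotopy cofiber recalled before the statement together with Lemma~\ref{lem:Cofib} (applied with the punctured cube as indexing category) one has $\hocofib_n F_A\cong\hocofib\bigl(\hocolim_{\sq^A_0}F_A\to F_A(A)\bigr)$, and hence an isomorphism in $\Ho\sC$, natural in $A$ and compatible with the map induced by $\sA^\circ_{\le1}\hookrightarrow\sA_{\le1}$,
\[
\1/(\{x_i:i\in A\})\cong\hocofib\bigl(\hocolim_{\sA^\circ_{\le1}}\sD_x\to\hocolim_{\sA_{\le1}}\sD_x\bigr).
\]

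Next I would remove the restriction ``$\le1$'' by a cofinality argument: the inclusions $\sA_{\le1}\hookrightarrow\sA$, $\sA^\circ_{\le1}\hookrightarrow\sA^\circ$, and likewise $\sI_{\le1}\hookrightarrow\sI$ and $\sI^\circ_{\le1}\hookrightarrow\sI^\circ$, are homotopy cofinal, because for each monomial $N$ the comma category over $N$ is the poset of monomials $M$ (nonzero, in the $\circ$-cases) with $M_i\le\min(N_i,1)$ for all $i$, which has the monomial $\overline N$ defined by $\overline N_i=\min(N_i,1)$ as an initial object and so is contractible. Hence the homotopy colimit of $\sD_x$ over $\sA$ (resp.\ $\sA^\circ$, $\sI$, $\sI^\circ$) agrees with that over $\sA_{\le1}$ (resp.\ $\sA^\circ_{\le1}$, $\sI_{\le1}$, $\sI^\circ_{\le1}$), compatibly with the evident inclusion-induced maps. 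Finally, since $I$ is countable I would fix a chain $A_1\subset A_2\subset\cdots$ of finite subsets with $\bigcup_n A_n=I$; it is cofinal in $\sP_\text{fin}(I)$, so $\1/(\{x_i:i\in I\})=\hocolim_n\1/(\{x_i:i\in A_n\})$, and since homotopy colimits commute with homotopy cofibers the finite case exhibits $\1/(\{x_i:i\in I\})$ as the homotopy cofiber of $\hocolim_n\hocolim_{\sA^\circ_{n,\le1}}\sD_x\to\hocolim_n\hocolim_{\sA_{n,\le1}}\sD_x$, where $\sA_n\subset\sI$ is the subcategory attached to $A_n$. By Lemma~\ref{lem:cofibration} the transition maps in $n\mapsto\hocolim_{\sA^\circ_{n,\le1}}\sD_x$ are cofibrations of cofibrant objects, so this sequential homotopy colimit is the ordinary colimit, namely $\hocolim_{\sI^\circ_{\le1}}\sD_x$ (since $\sI^\circ_{\le1}=\bigcup_n\sA^\circ_{n,\le1}$ and $\hocolim\sD_x$ commutes with this filtered colimit of indexing categories); similarly $\hocolim_n\hocolim_{\sA_{n,\le1}}\sD_x\cong\hocolim_{\sI_{\le1}}\sD_x$. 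Combining with the cofinality step gives $\1/(\{x_i:i\in I\})\cong\hocofib\bigl(\hocolim_{\sI^\circ}\sD_x\to\hocolim_{\sI}\sD_x\bigr)$, compatibly with the map induced by $\sI^\circ\hookrightarrow\sI$.

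The main obstacle is the coherence in the finite case --- that $\sD_x|_{\sA_{\le1}}$ really is identified with the iterated-cofiber cube $F_A$ \emph{as a diagram}: one must check that each face map $\times x^{N-M}$ of $\sD_x$ corresponds, up to a compatible homotopy, to the appropriate face of $F_A$, all the while bookkeeping the fibrant unit $\1$, the multiplication maps $\mu_\1$ (which are associative but not strictly unital against $1$ on the nose), and the reindexing by complementation of subsets of $A$. Once this naturality statement is in place, the cofinality computation and the passage from a sequential homotopy colimit to an ordinary colimit via Lemma~\ref{lem:cofibration} are formal.
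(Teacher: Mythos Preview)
Your proposal is correct and follows essentially the same route as the paper: cofinality of $\sI^\circ_{\le1}\subset\sI^\circ$, reduction to a chain of finite index sets via Lemma~\ref{lem:cofibration}, and for each finite $A$ the identification of $\1/(\{x_i:i\in A\})$ with the iterated cofiber of the cube $\sq^A\cong\sA_{\le1}$ (under $N\mapsto\{i:N_i=0\}$), invoking Lemma~\ref{lem:Cofib} to rewrite the iterated cofiber as $\hocofib(\hocolim_{\sA^\circ_{\le1}}\sD_x\to\1)$.

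The only substantive difference is organizational. The paper immediately replaces $\hocolim_\sI\sD_x$ by $\1$ (terminal object), so it never needs your cofinality claim for $\sA_{\le1}\subset\sA$ or the ``similarly'' for the non-punctured hocolim; correspondingly the paper only needs Lemma~\ref{lem:cofibration} as stated (for $\sJ^\circ_{\le1}$). More importantly, the paper does not attempt a direct pointwise identification of $\sD_x|_{\sA_{\le1}}$ with your cube $F_A$; instead it resolves exactly the coherence obstacle you flag by building the isomorphisms $\rho_n:\hocofib_nC(\sD_x,\pi)_n\to\1/(x_1)\otimes\cdots\otimes\1/(x_n)$ \emph{inductively} on $n$, introducing an auxiliary cube $C(\sD_x,\pi)'_n$ (obtained by inserting extra copies of $\1$) together with an explicit weak equivalence $C(\sD_x,\pi)'_n\to C(\sD_x,\pi)_n$ given by multiplication maps, and then checking the resulting square in $n-1\to n$ commutes in $\Ho\sC$. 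This inductive construction is precisely the missing bookkeeping you anticipated; it avoids having to produce a zig-zag of natural transformations between the two $\sq^A$-diagrams.
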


\begin{proof}  As  $1$ is the final object in $\sI$, the collection of maps
$\times x^N: T_*^N\to\1$ defines a weak equivalence  $\pi:\hocolim_\sI\sD_x\to
\1$. In addition, for each $N\in \sI^\circ$, the comma category
$N/\sI^\circ_1$ has initial object the map $N\to \bar{N}$, where $\bar{N}_i=1$
if $N_i>0$, and $\bar{N}_i=0$ otherwise. Thus   $\sI^\circ_1$ is homotopy right
cofinal in  $\sI^\circ$  (see e.g. \cite[definition 19.6.1]{Hirschhorn}). Since
$\sD_x$ is a diagram of cofibrant objects in $\sC$, 
it follows from \cite[theorem 19.6.7]{Hirschhorn} that the map
$\hocolim_{\sI_1^\circ}\sD_x\to \hocolim_{\sI^\circ}\sD_x$
is a weak equivalence. This reduces us to identifying $\1/(\{x_i\})$ with the
homotopy cofiber of 
$\pi^\circ_{\le 1}:\hocolim_{\sI_1^\circ}\sD_x\to \1$, where $\pi^\circ_{\le 1}$
is the composition of $\pi$ with the natural map
$:\hocolim_{\sI_1^\circ}\sD_x\to \hocolim_{\sI}\sD_x$.

Next, we reduce to the case of a finite set $I$.   Take $I=\bN$. Let
$\sP_{fin}(I)$ be the category of finite subsets of $I$, ordered by inclusion,
 consider the full   subcategory $\sP^O_{fin}(I)$   of $\sP_{fin}(I)$
consisting of the subsets $I_n:=\{1,\ldots, n\}$ and let $\sI_{n,
\le1}^\circ\subset \sI_{ \le1}^\circ$ be the full subcategory with all indices
in $I_n$.  As $\sP^O_{fin}(I)$ is cofinal in $\sP_{fin}(I)$, we have
\[
\colim_n\hocolim_{\sI_{n, \le1}^\circ}\sD_x\cong \hocolim_{\sI_{\le 1}^\circ}
\sD_x.
\]

Take $n\le m$. By lemma~\ref{lem:cofibration} the the map 
$\hocolim_{\sI_{n, \le 1}^\circ}\sD_x\to \hocolim_{\sI_{m, \le 1}^\circ}\sD_x$
is a cofibration  in $\sC$. Thus, using the Reedy model structure on $\sC^\bN$
with $\bN$ considered as a direct category, the $\bN$-diagram in $\sC$,
$n\mapsto \hocolim_{\sI_{n, \le1}^\circ}\sD_x$, is a cofibrant object in
$\sC^\bN$. As $\bN$ is a direct category,  the fibrations in  $\sC^\bN$ are the
pointwise ones, hence $\bN$ has pointwise constants \cite[definition
15.10.1]{Hirschhorn} and therefore \cite[theorem 19.9.1]{Hirschhorn} the
canonical map
\[
\hocolim_{n\in\bN}\hocolim_{\sI_{n, \le 1}^\circ}\sD_x\to
\colim_{n\in\bN}\hocolim_{\sI_{n, \le 1}^\circ}\sD_x
\]
is a weak equivalence in $\sC$. This gives us the weak equivalence in $\sC$
\[
\hocolim_n\hocolim_{\sI_{n,\le 1}^\circ}\sD_x\to \hocolim_{\sI_{\le
1}^\circ}\sD_x.
\]
Since $\bN$ is contractible, the canonical map $\hocolim_\bN\1\to \1$ is a weak
equivalence in $\sC$, giving us the weak equivalences
\begin{multline*}
\hocofib[ \hocolim_{\sI_{\le 1}^\circ}\sD_x\to\1]\\\sim
\hocofib[\hocolim_{n\in\bN}\hocolim_{\sI_{n, \le 1}^\circ}\sD_x\to
\hocolim_{n\in\bN}\1]\\\sim
\hocolim_{n\in\bN}[\hocofib[\hocolim_{\sI_{n, \le 1}^\circ}\sD_x\to\1]].
\end{multline*}
Thus, we need only exhibit isomorphisms in $\Ho\sC$
\[
\rho_n:\hocofib[\hocolim_{\sI_{n,\le 1}^\circ}\sD_x\to\1]\to  \1/(x_1,\ldots,
x_n):=\1/(x_1)\otimes\ldots\otimes\1/(x_n),
\]
which are natural in $n\in\bN$.

By lemma~\ref{lem:Cofib} we have a natural isomorphism in $\sC$, 
\[
\hocofib[\hocolim_{\sI_{n,\le 1}^\circ}\sD_x\to\1]\cong
\hocolim_{C(\sI_{n,\le 1}^\circ)^\Gamma}C(\sD_x, \pi)^\Gamma. 
\]
However, $\sI_{n,\le 1}^\circ$ is isomorphic to $\sq^n_0$ by sending
$N=(N_1,\ldots, N_n)$ to $I(N):=\{i \ |\ N_i=0\}$. Similarly,  $C(\sI_{n,\le
1}^\circ)$ is isomorphic to $\sq^n$, and $C(\sI_{n,\le 1}^\circ)^\Gamma$ is thus
isomorphic to $\sq^{n+1}_0$. From our discussion above, we see that 
$\hocolim_{C(\sI_{n,\le 1}^\circ)^\Gamma}C(\sD_x, \pi)^\Gamma$ is isomorphic to 
$\hocofib_nC(\sD_x, \pi)$, so we need only exhibit  isomorphisms in $\Ho\sC$
\[
\rho_n:\hocofib_nC(\sD_x, \pi)\to  \1/(x_1)\otimes\ldots\otimes\1/(x_n)
\]
which are natural in $n\in\bN$.

We do this inductively as follows. To include the index $n$ in the notation, we
write $C(\sD_x, \pi)_n$ for the functor $C(\sD_x, \pi):\sq^n\to \sC$. For $n=1$,
$\hocofib_1C(\sD_x, \pi)_1$ is the mapping cone of $\mu_1\circ (\times x_1\otimes\id):\1\otimes T_1\otimes\1\to \1$, which is isomorphic in $\Ho\sC$ to the homotopy cofiber of $\times
x_1:\1\otimes T_1\to \1$. As this latter is equal to $\1/(x_1)$, so we take
$\rho_1:\hocofib_1C(\sD_x, \pi)_1\to \1/(x_1)$ to be this isomorphism. We note that
$C(\sD_x, \pi)_n\circ i_n^+=C(\sD_x, \pi)_{n-1}$ and  
$C(\sD_x, \pi)_n\circ i_n^-=C(\sD_x, \pi)_{n-1}\otimes T_n\otimes\1$.

Define  $C(\sD_x, \pi)_n'$ by $C(\sD_x, \pi)'_n\circ i_n^-=C(\sD_x, \pi)_{n-1}\otimes\1\otimes T_n\otimes\1$,
$C(\sD_x, \pi)'_n\circ i_n^+=C(\sD_x, \pi)_{n-1}\otimes \1$, with the natural transformation $C(\sD_x,
\pi)_n'\circ\psi_n$  given as
\[
C(\sD_x, \pi)_{n-1}\otimes\1\otimes T_n\otimes\1\xrightarrow{(\id\otimes\mu)\circ(\id\otimes \times
x_n\otimes\id_\1)}C(\sD_x, \pi)_{n-1}\otimes\1.
\]
The evident multiplication maps give a weak equivalence 
$C(\sD_x, \pi)_n'\to C(\sD_x, \pi)_n$, giving us the isomorphism in $\Ho\sC$
\[
\rho_n:\hocofib_nC(\sD_x, \pi)_n\to\1/(x_1)\otimes\ldots \otimes  \1/(x_n) 
\]
defined as the composition
\begin{align*}
\hocofib_nC&(\sD_x, \pi)_n\cong \hocofib_nC(\sD_x, \pi)_n' \\
&\cong \hocofib(\hocofib_{n-1}(C(\sD_x, \pi)_{n-1}\otimes\1\otimes T_n)\\
&\hskip30pt\xrightarrow{\hocofib_{n-1}(\id\otimes \times x_n)}
\hocofib_{n-1}(C(\sD_x, \pi)_{n-1}\otimes \1))\\
&\cong  \hocofib(\hocofib_{n-1}(C(\sD_x, \pi)_{n-1})\otimes\1\otimes  T_n\\
&\hskip30pt\xrightarrow{\id\otimes\times x_n}
\hocofib_{n-1}(C(\sD_x, \pi)_{n-1})\otimes \1)\\
&\cong  \hocofib_{n-1}(C(\sD_x, \pi)_{n-1})\otimes \hocofib( \times
x_n:\1\otimes T_n\to  \1)\\
&=\hocofib_{n-1}(C(\sD_x, \pi)_{n-1})\otimes \1/(x_n)\\
&\xrightarrow{\rho_{n-1}\otimes\id}\1/(x_1)\otimes\ldots\otimes\1/(x_{n-1}
)\otimes  \1/(x_n).
\end{align*}

Via the definition of $\hocofib_n$, 
\begin{multline*}
\hocofib_nC(\sD_x, \pi)_n=\hocofib[\hocofib_{n-1}(C(\sD_x, \pi)_n\circ
i_n^-)\\\xrightarrow{\hocofib_{n-1}(C(\sD_x, \pi)_{n-1}(\psi_n))} 
\hocofib_{n}(C(\sD_x, \pi)_n\circ i_n^+]
\end{multline*}
and the identification $C(\sD_x, \pi)_n\circ i_n^+=C(\sD_x, \pi)_{n-1}$, we have
the canonical map $\hocofib_{n-1}(C(\sD_x, \pi)_{n-1})\to \hocofib_n(C(\sD_x,
\pi)_n$. One easily sees that the diagram
\[
\xymatrixcolsep{50pt}
\xymatrix{
\hocofib[\hocolim_{\sI_{n-1,\le
1}^\circ}\sD_x\to\1]\ar[d]_\sim\ar[r]&\hocofib[\hocolim_{\sI_{n,\le
1}^\circ}\sD_x\to\1]\ar[d]^\sim\\
\hocofib_{n-1}(C(\sD_x,
\pi)_{n-1})\ar[r]\ar[d]_-{\rho_{n-1}}&\hocofib_n(C(\sD_x,
\pi)_n)\ar[d]^-{\rho_n}\\
\1/(x_1)\otimes\ldots\otimes\1/(x_{n-1})\ar[r]_-{\rho_{\{1,\ldots,n-1\}\subset
\{1,\ldots, n\}}}
&\1/(x_1)\otimes\ldots\otimes  \1/(x_n)
}
\]
commutes in $\Ho\sC$, giving the desired naturality in $n$.
\end{proof}

Now let $M$ be an object in $\sC$, let $QM\to M$ be a cofibrant replacement and
form the $\sI$-diagram $\sD_x\otimes QM:\sI\to \sC$, $(\sD_x\otimes
QM)(N)=\sD_x(N)\otimes QM$. 

\begin{proposition}\label{prop:zeroth slice}  Assume that  $I$ is countable. Let
$M$ be an object in $\sC$. Then there is a canonical isomorphism in $\Ho\sC$
\[
M/(\{x_i \ |\ i\in I\}) \cong \hocofib[\hocolim_{\sI^\circ} \sD_x\otimes QM \to
\hocolim_\sI\sD_x\otimes QM]
\]
\end{proposition}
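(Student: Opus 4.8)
The plan is to reduce Proposition~\ref{prop:zeroth slice} to Lemma~\ref{lem:zeroth slice} by tensoring the entire argument with the cofibrant replacement $QM$. Concretely, recall that by definition $M/(\{x_i : i\in I\}) = \1/(\{x_i : i\in I\})\otimes QM$, where $\1/(\{x_i : i\in I\})$ is the homotopy colimit over $\sP_{\text{fin}}(I)$ of the cofibrant objects $\1/(\{x_i : i\in A\})$. By Lemma~\ref{lem:zeroth slice} there is a canonical isomorphism in $\Ho\sC$
\[
\1/(\{x_i : i\in I\}) \cong \hocofib[\hocolim_{\sI^\circ}\sD_x \to \hocolim_\sI\sD_x].
\]
The first step is to observe that $\1/(\{x_i : i\in I\})$ is cofibrant (Remark after the Definition), so tensoring with $QM$ computes the derived tensor product, and hence it suffices to apply the functor $(-)\otimes QM$ to the isomorphism above and commute it past the relevant homotopy colimits and homotopy cofiber.

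The key point is that $(-)\otimes QM$ is a left Quillen functor in the variable $M$ fixed, i.e.\ $T\otimes(-)$ for $T$ cofibrant preserves cofibrations and trivial cofibrations, hence preserves all homotopy colimits up to canonical weak equivalence; in particular it preserves the homotopy cofiber appearing in Lemma~\ref{lem:zeroth slice} and the homotopy colimits $\hocolim_{\sI^\circ}$ and $\hocolim_\sI$. Since $QM$ is cofibrant, $(-)\otimes QM$ likewise preserves these. Thus one gets a chain of canonical isomorphisms in $\Ho\sC$:
\begin{align*}
M/(\{x_i : i\in I\}) &= \1/(\{x_i : i\in I\})\otimes QM\\
&\cong \hocofib[\hocolim_{\sI^\circ}\sD_x \to \hocolim_\sI\sD_x]\otimes QM\\
&\cong \hocofib[(\hocolim_{\sI^\circ}\sD_x)\otimes QM \to (\hocolim_\sI\sD_x)\otimes QM]\\
&\cong \hocofib[\hocolim_{\sI^\circ}(\sD_x\otimes QM) \to \hocolim_\sI(\sD_x\otimes QM)],
\end{align*}
where the first isomorphism is Lemma~\ref{lem:zeroth slice} tensored with $QM$, the second uses that $(-)\otimes QM$ preserves homotopy cofibers (being a homotopy colimit), and the third uses that $(-)\otimes QM$ commutes with the homotopy colimits over $\sI^\circ$ and $\sI$, together with the identification $(\sD_x\otimes QM)(N) = \sD_x(N)\otimes QM$.

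The main thing to be careful about — and what I would regard as the only real obstacle — is the bookkeeping needed to ensure all these comparison maps are genuinely canonical and compatible: one must check that the weak equivalences produced in the proof of Lemma~\ref{lem:zeroth slice} (the cofinality reduction to $\sI^\circ_{\le 1}$, the passage to finite $I$ via Lemma~\ref{lem:cofibration} and the Reedy argument, and the inductive identification of $\hocofib_n C(\sD_x,\pi)_n$ with $\1/(x_1)\otimes\cdots\otimes\1/(x_n)$) all remain weak equivalences after applying $(-)\otimes QM$. This holds because each such equivalence is between diagrams of cofibrant objects, and $T\otimes(-)$ with $T$ cofibrant sends weak equivalences between cofibrant objects to weak equivalences (Ken Brown's lemma), and sends cofibrations of cofibrant diagrams to cofibrations; the Reedy-cofibrancy arguments in Lemma~\ref{lem:cofibration} and the contractibility of $\bN$ go through verbatim with $\sD_x$ replaced by $\sD_x\otimes QM$. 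Once this is noted, no new argument is required: the proof is simply the proof of Lemma~\ref{lem:zeroth slice} run with $\sD_x\otimes QM$ in place of $\sD_x$ throughout, and the statement follows. Alternatively, and perhaps more cleanly, one can simply invoke that $(-)\otimes QM$ is a left derived functor and hence commutes with all the homotopy colimits and the homotopy cofiber on the nose in $\Ho\sC$, which is the phrasing I would adopt in the write-up.
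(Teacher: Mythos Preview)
Your proposal is correct and follows essentially the same approach as the paper: the paper's proof simply invokes Lemma~\ref{lem:zeroth slice}, the definition of $M/(\{x_i\})$ as $\1/(\{x_i\})\otimes QM$, and the canonical isomorphism $\hocofib[\hocolim_{\sI^\circ}\sD_x\otimes QM\to\hocolim_\sI\sD_x\otimes QM]\cong\hocofib[\hocolim_{\sI^\circ}\sD_x\to\hocolim_\sI\sD_x]\otimes QM$. Your write-up supplies considerably more justification for this last isomorphism than the paper does, but the argument is the same.
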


\begin{proof} This follows directly from lemma~\ref{lem:zeroth slice}, noting
the definition of $M/(\{x_i \ |\ i\in I\})$ as $[\1/(\{x_i \ |\ i\in
I\})]\otimes QM$ and the canonical isomorphism
\begin{multline*}
\hocofib[\hocolim_{\sI^\circ} \sD_x\otimes QM \to \hocolim_\sI\sD_x\otimes
QM]\\\cong
\hocofib[\hocolim_{\sI^\circ} \sD_x \to \hocolim_\sI\sD_x]\otimes QM
\end{multline*}
\end{proof}

\begin{proposition}
\label{prop:induction step for higher slices}
Let $\sF:\sI_{\deg  \geq n}\to \sC$ 
be a diagram in a cofibrantly generated model category
$\sC$. Suppose for every monomial $M$ of degree $n$ the natural map
$\hocolim \sF|_{\sI_{> M}}\to \sF(M)$
is a weak equivalence. Then the natural map 
\[
\hocolim \sF|_{\deg \geq n+1}
\to \hocolim \sF
\]
is a weak equivalence.
\end{proposition}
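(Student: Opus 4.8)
The plan is to decompose the indexing category $\sI_{\deg\ge n}$ according to the "lowest-degree part" of each monomial and to reduce the global statement to the local hypothesis via a cofinality/Fubini argument for homotopy colimits. First I would observe that $\sI_{\deg\ge n}$ is covered by the subcategories $\sI_{\ge M}$ as $M$ ranges over the (finitely many, if $I$ is finite; in general an arbitrary set of) monomials of degree exactly $n$, together with the subcategory $\sI_{\deg\ge n+1}$; moreover an intersection $\sI_{\ge M}\cap\sI_{\ge M'}$ is $\sI_{\ge\mathrm{lcm}(M,M')}$, which lies in $\sI_{\deg\ge n+1}$ whenever $M\ne M'$. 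So the poset $\sI_{\deg\ge n}$ is, up to the degree-$n$ stratum, glued from the pieces $\sI_{\ge M}$ along $\sI_{\deg\ge n+1}$.

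The key step is then a Mayer--Vietoris / gluing statement for homotopy colimits: since $\hocolim$ sends the relevant (co)Cartesian arrangement of subcategories to a homotopy pushout-type diagram, $\hocolim\sF$ is built from $\hocolim\sF|_{\sI_{\deg\ge n+1}}$ by attaching, for each degree-$n$ monomial $M$, the homotopy cofiber of $\hocolim\sF|_{\sI_{>M}}\to\hocolim\sF|_{\sI_{\ge M}}$. But $M$ is the initial object of $\sI_{\ge M}$, so $\hocolim\sF|_{\sI_{\ge M}}\simeq\sF(M)$, and the hypothesis says precisely that $\hocolim\sF|_{\sI_{>M}}\to\sF(M)$ is a weak equivalence; hence each attaching cofiber is contractible, and the map $\hocolim\sF|_{\sI_{\deg\ge n+1}}\to\hocolim\sF$ is a weak equivalence. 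Concretely I would realize this by writing the simplicial-set–level statement: the nerve $\sN(\sI_{\deg\ge n})$ is the union of $\sN(\sI_{\deg\ge n+1})$ with the $\sN(\sI_{\ge M})$, these meeting pairwise inside $\sN(\sI_{\deg\ge n+1})$, and then invoke that a pointwise-cofibrant diagram's homotopy colimit over such a union is the corresponding homotopy pushout (as in the realization/latching arguments already used in Lemma~\ref{lem:cofibration} and Lemma~\ref{lem:Cofib}), after replacing $\sF$ by a pointwise-cofibrant, e.g. Reedy-cofibrant, model so the cofibers compute correctly.

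A clean alternative I would keep in reserve is a direct cofinality argument: show that the inclusion $\sI_{\deg\ge n+1}\hookrightarrow\sI_{\deg\ge n}$ becomes, after the hypothesis is imposed, a map inducing an equivalence on homotopy colimits by checking it on comma categories — for $N\in\sI_{\deg\ge n}$ with $\deg N=n$, the comma category $N/\sI_{\deg\ge n+1}$ is $\sI_{>N}$, whose homotopy type is trivialized by hypothesis — and then quote Hirschhorn's cofinality theorem \cite[theorem 19.6.7]{Hirschhorn}; one must still feed in the degree-$n$ case separately, which is exactly the hypothesis, and handle the countability/filtered-colimit reduction to finite $I$ as in Lemma~\ref{lem:zeroth slice}. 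The main obstacle is bookkeeping: making the gluing of the subcategories $\sI_{\ge M}$ precise when there are several degree-$n$ monomials (so the arrangement is an $n$-fold, not just binary, pushout) and ensuring all homotopy colimits are computed on cofibrant diagrams so that the "contractible cofiber $\Rightarrow$ equivalence" step is legitimate; everything else is a routine unwinding of the definition of $\hocolim$ as a realization of nerves.
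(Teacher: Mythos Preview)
Your main argument is correct and is essentially the argument the paper cites from \cite[lemma 4.4]{Spitzweck10}: one passes to a projectively cofibrant replacement $Q\to\sF$, so that all the relevant homotopy colimits are computed by ordinary colimits, and then uses that the degree-$n$ monomials are precisely the objects of $\sI_{\deg\ge n}$ with no non-identity outgoing maps. This yields the pushout description
\[
\colim Q\ \cong\ \colim Q|_{\deg\ge n+1}\ \amalg_{\coprod_M \colim Q|_{\sI_{>M}}}\ \coprod_M Q(M),
\]
and the hypothesis makes each leg $\colim Q|_{\sI_{>M}}\to Q(M)$ a weak equivalence, so the other leg is one as well. The paper itself gives no further detail beyond the citation (and a correction to Spitzweck's statement, replacing $\sI_{\ge M}$ by $\sI_{>M}$), so your write-up is in fact more explicit than the paper's.

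Two errors to fix. First, $M$ is the \emph{terminal} object of $\sI_{\ge M}$, not the initial one: every $N\in\sI_{\ge M}$ has $N_i\ge M_i$ and hence a map $N\to M$. It is terminality that gives $\hocolim\sF|_{\sI_{\ge M}}\simeq\sF(M)$; an initial object would simplify the homotopy \emph{limit} instead.

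Second, your reserve cofinality argument does not work as written and cannot be salvaged directly. For $\deg N=n$ the under-category $N/\sI_{\deg\ge n+1}$ is \emph{empty}: a map $N\to K$ forces $\deg K\le\deg N=n$, contradicting $K\in\sI_{\deg\ge n+1}$. What you have identified with $\sI_{>N}$ is the \emph{over}-category $\sI_{\deg\ge n+1}/N$, which is irrelevant for homotopy cofinality with respect to colimits. Moreover, the hypothesis says $\hocolim\sF|_{\sI_{>N}}\to\sF(N)$ is a weak equivalence, which is a statement about the diagram $\sF$, not about the homotopy type of the nerve of $\sI_{>N}$; so even the over-category version would not feed into Hirschhorn's cofinality theorem. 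Drop this alternative and keep the pushout argument.
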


\begin{proof} 
This is just \cite[lemma 4.4]{Spitzweck10}, with the following corrections: the
statement of the lemma in {\it loc.  cit.} has 
``$\hocolim\sF|_{\sI_{\ge M}}\to \sF(M)$ is a weak equivalence'' rather than the
correct assumption
``$\hocolim\sF|_{\sI_{> M}}\to \sF(M)$ is a weak equivalence'' and in the proof,
one should replace the object $Q(M)$ with $\mathrm{colim\,} Q|_{I>M}$ rather
than with $\mathrm{colim\,} Q|_{I\ge M}$. \end{proof}

\section{Slices of effective motivic module
spectra}\label{sec:SlicesModuleSpectra}

In this section we will describe the slices for  modules for a 
commutative and effective ring $T$-spectrum $\sR$ that satisfies certain additional conditions. We adapt  the constructions used in describing slices of 
$\MGL$ in \cite{Spitzweck10}, which go through without significant change in
this more general  setting. 

Let us first recall the definition of the slice tower in  $\SH(S)$. We will use
the standard model category $\Mot:=\Mot(S)$ of  symmetric $T$-spectra over $S$, 
$T:=\A^1/\A^1\setminus\{0\}$, with the motivic model structure as in
\cite{Jardine}, for defining the triangulated tensor category
$\SH(S):=\Ho\Mot(S)$.

For an integer $q$, let $\Sigma_T^q\SH^\eff(S)$ denote the localizing
subcategory of $\SH(S)$ generated by $\sS_q:=\{\Sigma_T^q\Sigma_T^\infty X_+\ |\
p\ge q, X\in \Sm/S\}$, that is $\Sigma_T^q\SH^\eff(S)$ is the smallest
triangulated  subcategory of $\SH(S)$ which contains  $\sS_q$ and is closed
under direct sums and isomorphisms in $\SH(S)$. This gives a filtration on 
$\SH(S)$ by full localizing subcategories
\[
\cdots  \subset \Sigma^{q+1}_T \SH^\eff(S) \subset
\Sigma^{q}_T \SH^\eff(S)   \subset
\Sigma^{q-1}_T  \SH^\eff(S) \subset   \cdots\subset \SH(S).
\]

The set $\sS_q$  is a set of compact generators of $ \Sigma^{q}_T \SH(S)$ and the set $\cup_q\sS_q$ is similarly a set of compact generators for $\SH(S)$. By Neeman's triangulated version of Brown representability theorem \cite{Neeman}, the 
inclusion $i_q: \Sigma^{q}_T \SH^\eff(S)  
\to \SH(S)$ has a right adjoint $r_q: \SH(S)  \to \Sigma^{q}_T
\SH^\eff(S)$. We let $f_q:=i_q\circ r_q$. The inclusion  $ \Sigma^{q+1}_T
\SH^\eff(S)\to  \Sigma^{q}_T
\SH^\eff(S)$ induces a canonical natural transformation $f_{q+1}   \to f_{q}$. 
Putting these together forms the slice tower
\begin{equation}
\label{slice tower}
\cdots\to f_{q+1}\to f_q\to\cdots\to \id.
\end{equation}
For each $q$ there exist triangulated functor 
$s_q:\SH(S) \to \SH(S)$ 
such that for every $\sE\in \SH(S),\ s_q (\sE) \in
\Sigma^{q}_T \SH^\eff(S)$ and there is a  canonical and natural distinguished
triangle 
\[
f_{q+1}(\sE)  \to f_q(\sE) \to s_q(\sE)  \to 
\Sigma f_{q+1}(\sE)
\]
in $\SH(S)$.

Pelaez has given a lifting of the construction of the functors $f_q$ to the
model category level. For this, he starts with the model category $\Mot$ and
forms for each $n$ the right Bousfield localization of $\Mot$ with respect to
the objects $\Sigma^m_T  F_n X_+$ with $m-n\ge q$ and $X\in \Sm/S$. Here $F_n
X_+$ is the shifted $T$-suspension spectrum, that is, $\Sigma_T^{m-n}X_+$ in
degree $m\ge n$, $pt$ in degree $m<n$, and with identity bonding maps. Calling
this Bousfield localization $\Mot_q$, the functor $r_q$ is given by taking a functorial
cofibrant replacement in $\Mot_q$. As the underlying categories are all the
same, this gives liftings $\tilde f_q$ of $f_q$ to endofunctors on $\Mot$. The
technical condition on $\Mot$ invoked by Pelaez is that of {\em cellularity} and
right properness, which ensures that the right Bousfield localization exists;
this follows from the work of Hirschhorn \cite{Hirschhorn}. Alternatively, one can use the fact
that $\Mot$ is a {\em combinatorial} right proper model category, following
work 
of J. Smith, detailed for example in \cite{Barwick}. 

The combinatorial property passes to module categories, and so this approach
will be useful here. The category $\Mot$ is a closed symmetric monoidal
simplicial model category, with cofibrant unit the sphere (symmetric) spectrum
$\mS_S$ and product $\wedge$. Let  $\sR$ be a commutative monoid in $\Mot$. We
have the model category $\sC:=\sR$-$\Mod$ of $\sR$-modules, as constructed in
\cite{SchwedeShipley}.  The fibrations and weak equivalences are the morphisms
which are  fibrations, resp. weak equivalences, after applying the forgetful
functor to $\Mot$; cofibrations are those maps having the left lifting property
with respect to trivial fibrations. This makes  $\sC$ into a pointed closed symmetric
monoidal simplicial model category; $\sC$ is in addition cofibrantly generated
and combinatorial. Assuming that $\sR$ is a cofibrant object in $\Mot$, the free
$\sR$-module functor, $\sE\mapsto \sR\wedge\sE$, gives a left adjoint to the
forgetful functor and gives rise to a Quillen adjunction.  For details as to
these  facts and a general construction of this model category structure on module
categories, we refer the reader to \cite{SchwedeShipley}; another source is 
\cite{Hovey}, especially theorem 1.3, proposition 1.9 and proposition 1.10.

The model category $\sR$-$\Mod$ inherits right properness from $\Mot$. We may therefore form the right Bousfield localization $\sC_q$ with respect to
the free $\sR$-modules $\sR\wedge \Sigma^m_T  F_n X_+$ with $m-n\ge q$ and $X\in
\Sm/S$, and define the endofunctor $\tilde{f}_q^\sR$ on $\sC$ by taking a
functorial cofibrant replacement in $\sC_q$. By the adjunction, one sees that
$\Ho\sC_q$  is equivalent to the localizing subcategory of $\Ho\sC$ (compactly)
generated by $\{\sR\wedge\Sigma^m_T  F_n X_+\ |\ m-n\ge q, X\in \Sm/S\}$. We
denote this localizing subcategory by $\Sigma^q_T\Ho\sC^\eff$, or $\Ho\sC^\eff$
for $q=0$. We call an object $\sM$ of $\sC$ {\em effective} if the image of
$\sM$ in $\Ho\sC$ is in $\Ho\sC^\eff$, and denote the full subcategory of effective objects of $\sC$ by $\sC^\eff$.

 Just as above, Neeman's results give a right adjoint $r^\sR_q$ to the inclusion
$i^\sR_q:\Sigma^q_T\sC^\eff\to \sC$ and the composition $f_q^\sR:=i_q^\sR\circ
r_q^\sR$ is represented by $\tilde{f}_q^\sR$. One recovers the functors $f_q$
and $\tilde{f}_q$ by taking $\sR=\mS_S$.

\begin{lemma}
\label{lem:properties of slice functor} Let $\sR$ be a cofibrant commutative
monoid in $\Mot$.
The functors $f^\sR_q:\Ho\sC\to \Ho\sC$ and their liftings $\tilde{f}^\sR_q$ 
have the following properties.
\begin{enumerate}
 \item Each $f^\sR_n$ is idempotent, i.e., $(f^\sR_n)^2=f^\sR_n$.  
 \item $f^\sR_n \Sigma^1_T= \Sigma^1_T f^\sR_{n-1}$ for $n\in\Z$.
  \item Each $\tilde{f}^\sR_n$ commutes with homotopy colimits.
\item Suppose that $\sR$ is in $\SH^\eff(S)$. Then the forgetful functor
$U:\Ho\sR$-$\Mod \to\SH(S)$ induces an isomorphism $U\circ f_q^\sR\cong f_q\circ
U$ as well as an isomorphism $U\circ s_q^\sR\cong s_q\circ U$, for all $q\in\Z$.
\end{enumerate}
\end{lemma}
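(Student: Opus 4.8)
\textbf{Proof plan for Lemma~\ref{lem:properties of slice functor}.}

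The strategy is to reduce each assertion to the corresponding known property of the slice functors on $\SH(S)$ (the case $\sR=\mS_S$) together with formal properties of right Bousfield localization in a combinatorial right proper model category. For item (1): since $\tilde f^\sR_q$ is a functorial cofibrant replacement in the right Bousfield localization $\sC_q$, its image in $\Ho\sC$ factors the identity through the localizing subcategory $\Sigma^q_T\Ho\sC^\eff$, and $r^\sR_q$ is right adjoint to the inclusion $i^\sR_q$; the counit of an adjunction between a full subcategory inclusion and its right adjoint is an isomorphism on objects already in the subcategory, so $f^\sR_n\circ f^\sR_n\cong f^\sR_n$. For item (2): one checks that $\Sigma^1_T$ carries the set of generators $\{\sR\wedge\Sigma^m_TF_nX_+ : m-n\ge q\}$ of $\Sigma^q_T\Ho\sC^\eff$ bijectively (up to isomorphism) onto the generators of $\Sigma^{q+1}_T\Ho\sC^\eff$ — indeed $\Sigma^1_T\sR\wedge\Sigma^m_TF_nX_+\cong\sR\wedge\Sigma^{m+1}_TF_nX_+$ and $(m+1)-n\ge q+1\iff m-n\ge q$ — and $\Sigma^1_T$ is an auto-equivalence of $\Ho\sC$. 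Hence $\Sigma^1_T$ identifies $\Sigma^{q}_T\Ho\sC^\eff$ with $\Sigma^{q+1}_T\Ho\sC^\eff$, and the adjoints match up: $f^\sR_{q+1}\Sigma^1_T\cong\Sigma^1_Tf^\sR_q$, which rewritten with $n=q+1$ is the claim. (One must be slightly careful that $\Sigma^1_T$ is exact and preserves arbitrary direct sums so that it genuinely carries one localizing subcategory onto the other; this is standard.)

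For item (3): the functors $\tilde f^\sR_q$ are built as cofibrant replacements in a right Bousfield localization of a combinatorial (hence cellular, after \cite{Barwick}) right proper simplicial model category. By \cite[Theorem~19.7.2 and the colocalization results]{Hirschhorn}, or by the combinatorial formulation, cofibrant replacement in the colocal model structure can be taken to commute with homotopy colimits — more precisely, the colocalization functor is a homotopy colimit-preserving functor because it is given by a cofibrant-replacement cellular construction using the generating colocal cells. Alternatively, and perhaps more cleanly, one appeals to the fact that $\Sigma^q_T\Ho\sC^\eff$ is compactly generated (the generators $\sR\wedge\Sigma^m_TF_nX_+$ are compact in $\Ho\sC$, and remain so in the localizing subcategory), so by Neeman's theory the right adjoint $r^\sR_q$ preserves arbitrary direct sums, and then a standard argument (right adjoint preserving coproducts plus exactness) shows $f^\sR_q=i^\sR_q r^\sR_q$ preserves homotopy colimits; lifting this to the model-category level gives the statement for $\tilde f^\sR_q$.

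For item (4), which I expect to be the main obstacle: one must show the forgetful functor $U:\Ho\sR\text{-}\Mod\to\SH(S)$ intertwines the two slice truncations when $\sR\in\SH^\eff(S)$. The key point is that the free-forget Quillen adjunction $(\sR\wedge(-),U)$ is compatible with the effective subcategories: $\sR\wedge\Sigma^m_TF_nX_+$ generates $\Sigma^q_T\Ho\sC^\eff$ while $\Sigma^m_TF_nX_+$ generates $\Sigma^q_T\SH^\eff(S)$, and because $\sR$ is effective, $U$ sends $\Sigma^q_T\Ho\sC^\eff$ into $\Sigma^q_T\SH^\eff(S)$ (the image of a generator $\sR\wedge\Sigma^m_TF_nX_+$ is $\sR\wedge\Sigma^m_TF_nX_+$, a smash of two effective objects, hence effective and in the right slant of the filtration by the product structure on the slice filtration established by Pelaez/Levine). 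Then for $\sM\in\sC$ one has the triangle $f^\sR_{q+1}\sM\to f^\sR_q\sM\to\sM$; applying $U$ gives a triangle with $Uf^\sR_q\sM\in\Sigma^q_T\SH^\eff(S)$ and $U(\text{cofiber})$ supported in slices $<q$ — the latter because the cofiber $\sM/f^\sR_q\sM$ is right-orthogonal to $\Sigma^q_T\Ho\sC^\eff$, and $U$ being right adjoint to $\sR\wedge(-)$ translates right-orthogonality in $\Ho\sC$ to right-orthogonality in $\SH(S)$ against the generators $\Sigma^m_TF_nX_+$ (using $\Hom_{\SH(S)}(\Sigma^m_TF_nX_+, U(-))\cong\Hom_{\Ho\sC}(\sR\wedge\Sigma^m_TF_nX_+, -)$). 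By uniqueness of the slice truncation triangle this forces $Uf^\sR_q\sM\cong f_qU\sM$ naturally, and passing to cofibers of $f^\sR_{q+1}\to f^\sR_q$ gives $Us^\sR_q\cong s_qU$. The delicate part is verifying the orthogonality translation and that $U$ indeed lands in the effective subcategory — i.e. that smashing with an effective $\sR$ does not lower the slice — which is where the hypothesis $\sR\in\SH^\eff(S)$ is essential and where one invokes the multiplicativity of the slice filtration.
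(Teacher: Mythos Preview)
Your proposal is correct and follows essentially the same approach as the paper. For (1) and (2) the paper simply says ``follow from universal property of triangulated functors $f^\sR_n$,'' which is exactly what you unpack; for (3) the paper cites \cite[Cor.~4.5]{Spitzweck10} and says the proof carries over, while you give the argument (compact generation $\Rightarrow$ the right adjoint preserves coproducts); for (4) the paper verifies the universal property of $f_qU\sM$ by showing $Uf^\sR_q\sM\in\Sigma^q_T\SH^\eff(S)$ and checking $\Hom_{\SH(S)}(\Sigma^p_T\Sigma^\infty_TX_+,Uf^\sR_q\sM)\cong\Hom_{\SH(S)}(\Sigma^p_T\Sigma^\infty_TX_+,U\sM)$ directly via the adjunction, whereas you phrase the same computation as orthogonality of the cofiber $U(\sM/f^\sR_q\sM)$---these are equivalent formulations of the same argument.

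One small wording issue: in (4) you refer to ``the triangle $f^\sR_{q+1}\sM\to f^\sR_q\sM\to\sM$,'' which is not a distinguished triangle but a composable pair of maps; the triangle you actually use (and correctly analyze) is $f^\sR_q\sM\to\sM\to\sM/f^\sR_q\sM$. Also, your appeal to ``multiplicativity of the slice filtration'' is slightly heavier than needed: the paper just observes that $(-)\wedge\sR$ preserves cofiber sequences and direct sums, hence sends $\Sigma^q_T\SH^\eff(S)$ into itself when $\sR$ is effective, which immediately gives $U(\Sigma^q_T\Ho\sC^\eff)\subset\Sigma^q_T\SH^\eff(S)$ since $U$ on generators is $\sR\wedge\Sigma^m_TF_nX_+$.
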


\begin{proof}
(1) and (2) follow from universal property of triangulated functors $f^\sR_n$.
In case $\sR=\mS_S$,  (3) is 
proved in \cite[Cor 4.5]{Spitzweck10}; the proof for general $\sR$ is the same.
For (4), it suffices to prove the result for $f_q$ and $f^\sR_q$.  Take $\sM\in
\sC$. We check the universal property of $Uf^\sR_q\sM\to \sM$: Since $\sR$ is in
$\SH^\eff(S)$ and the functor $-\wedge\sR$ is compatible with homotopy cofiber
sequences and direct sums,  $-\wedge\sR$ maps $\Sigma^q_T\SH^\eff(S)$ into
itself for each $q\in \Z$. As $U(\sR\wedge \sE)=\sR\wedge \sE$, it follows that
$U(\Sigma^q_T\Ho\sR$-$\Mod^\eff)\subset \Sigma^q_T\SH^\eff(S)$ for each $q$. In
particular, $U(f^\sR_q(\sM))$ is in $\Sigma^q_T\SH^\eff(S)$.  For $p\ge q$,
$X\in \Sm/S$,  we have
\begin{align*}
\Hom_{\SH(S)}(\Sigma^p_T\Sigma_T^\infty X_+, U(f^\sR_q(\sM)))&\cong 
\Hom_{\Ho\sC}(\sR\wedge\Sigma^p_T\Sigma_T^\infty X_+, f^\sR_q(\sM))\\
&\cong \Hom_{\Ho\sC}(\sR\wedge\Sigma^p_T\Sigma_T^\infty X_+, \sM)\\
&\cong \Hom_{\SH(S)}(\Sigma^p_T\Sigma_T^\infty X_+, U(\sM))
\end{align*}
so the canonical map $U(f^\sR_q(\sM))\to f_q(U(\sM))$ is therefore an
isomorphism.
\end{proof}

From the adjunction $\Hom_\sC(\sR, \sM)\cong \Hom_{\Mot}(\mS_S, \sM)$
and the fact that $\mS_S$ is a cofibrant object of $\Mot$, we see that $\sR$ is
a cofibrant object of $\sC$. Thus $\sC$ is a closed symmetric monoidal
simplicial model category with cofibrant unit $1:=\sR$ and monoidal product
$\otimes=\wedge_\sR$.  Similarly, $T_\sR:=\sR\wedge T$ is a cofibrant object of
$\sC$. Abusing notation, we write $\Sigma_T(-)$ for the endofunctor $A\mapsto
A\otimes T_\sR$ of $\sC$.

We recall that the category $\Mot$ satisfies the monoid axiom of Schwede-Shipley
\cite[definition 3.3]{SchwedeShipley}; the reader can see for example    the
proof of \cite[lemma 4.2]{Hoyois}. Following remark~\ref{rem:Fibrant}, there is
a fibrant replacement $\sR\to \1$ in $\sC$ such that $\1$ is an $\sR$-algebra;
in particular, $\sR\to \1$ is a cofibration and a weak equivalence in both $\sC$
and in $\Mot$, and $\1$ is fibrant in in both $\sC$ and in $\Mot$.

For each $\bar{x}\in \sR^{-2d,-d}(S)$, we have the corresponding element
$\bar{x}:T_\sR^{\otimes d}\to \sR$ in $\Ho\sC$, which we may lift to a morphism
$x:T_\sR^{\otimes d}\to \1$ in $\sC$. Thus, for a collection of elements 
$\{\bar{x}_i\in \sR^{-2d_i,-d_i}(S)\ |\ i\in I\}$, we have the associated
collection of maps in $\sC$, $\{x_i:T_\sR^{\otimes d_i}\to \1\ |\ i\in I\}$ and
thereby the quotient object $\1/(\{x_i\})$ in $\sC$. Similarly, for $\sM$ an
$\sR$-module, we have the $\sR$-module $\sM/(\{x_i\})$, which is a cofibrant
object in $\sC$. We often write $\sR/(\{x_i\})$ for $\1/(\{x_i\})$.

\begin{lemma}
\label{effectivity of quotients of effective spectra}
Suppose that $\sR$ is in $\SH^\eff(S)$.
Then for any set 
\[
\{\bar{x}_i\in \sR^{-2d_i,-d_i}(S)\ |\ i\in I, d_i>0\} 
\]
of elements of $\sR$-cohomology,  the  object $\sR/(\{x_i\})$ is effective. If
in addition $\sM$ is an $\sR$-module and is effective, then $\sM/(\{x_i\})$ is
effective.
\end{lemma}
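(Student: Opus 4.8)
The plan is to reduce everything to the structural description of quotients from \S\ref{general quotients}, specifically proposition~\ref{prop:zeroth slice}, together with the stability of the effective subcategory under homotopy colimits and the shift behavior $f^\sR_n\Sigma^1_T = \Sigma^1_T f^\sR_{n-1}$ from lemma~\ref{lem:properties of slice functor}. First I would recall that $\sC^\eff$, being generated by $\{\sR\wedge\Sigma^m_TF_nX_+ : m-n\ge 0\}$ as a localizing subcategory, is closed under arbitrary direct sums, homotopy cofibers, and hence under all homotopy colimits in $\Ho\sC$ (these are built from coproducts and cofibers). So it suffices to produce a presentation of $\sR/(\{x_i\})$ as a homotopy colimit of objects already known to be effective.

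Next I would set up the key input on effectivity of the pieces. By proposition~\ref{prop:zeroth slice}, $\sR/(\{x_i\})$ is the homotopy cofiber of a map $\hocolim_{\sI^\circ}\sD_x \to \hocolim_\sI\sD_x$, where $\sD_x(N) = T^N_*$ is (up to the weak equivalences $\sR\to\1$) a smash product of copies of $\sR$ with copies of $T_\sR^{\otimes d_i}$. Since $T_\sR = \sR\wedge T$ and $T \cong \Sigma^1_T\mathbf 1_{\Mot}$, each $T_\sR^{\otimes d_i}$ is $\Sigma^{d_i}_T\sR$; as $\sR$ is assumed effective and $d_i > 0$, each such suspension lies in $\Sigma^{d_i}_T\Ho\sC^\eff \subset \Ho\sC^\eff$. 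A smash product over $\sR$ of effective $\sR$-modules is again effective — one checks this on generators using that $\Sigma^p_TF_nX_+ \wedge \Sigma^{p'}_TF_{n'}X'_+$ is again a shifted suspension spectrum of a smooth scheme with nonnegative shift, so $-\wedge_\sR-$ carries $\Ho\sC^\eff\times\Ho\sC^\eff$ into $\Ho\sC^\eff$. Hence every $\sD_x(N) = T^N_*$ is effective, so both $\hocolim_{\sI^\circ}\sD_x$ and $\hocolim_\sI\sD_x$ are effective, and therefore so is their homotopy cofiber $\sR/(\{x_i\})$.

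For the second assertion, if $\sM$ is an effective $\sR$-module, then by definition $\sM/(\{x_i\}) = \1/(\{x_i\})\otimes Q\sM$, and by proposition~\ref{prop:zeroth slice} this is the homotopy cofiber of $\hocolim_{\sI^\circ}\sD_x\otimes Q\sM \to \hocolim_\sI\sD_x\otimes Q\sM$. Each term $\sD_x(N)\otimes Q\sM = T^N_*\otimes_\sR Q\sM$ is a nonnegative $T$-suspension smashed with the effective module $\sM$, hence effective by the same smash-product argument; taking homotopy colimits and the cofiber keeps us inside $\Ho\sC^\eff$. This gives the claim.

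The main obstacle is the closure of $\Ho\sC^\eff$ under the monoidal product $\otimes = \wedge_\sR$ — i.e. that a smash of effective modules is effective. I expect this to follow cleanly from the description of $\Ho\sC^\eff$ via compact generators and the computation $\sR\wedge\Sigma^p_TF_nX_+ \wedge_\sR \sR\wedge\Sigma^{p'}_TF_{n'}X'_+ \simeq \sR\wedge\Sigma^{p+p'}_TF_{n+n'}(X\times_S X')_+$, which has nonnegative slice degree whenever the two factors do; combined with the fact that $-\wedge_\sR-$ commutes with homotopy colimits in each variable and that $\Ho\sC^\eff$ is a localizing subcategory, one extends from generators to all of $\Ho\sC^\eff$ by a standard two-variable localizing-subcategory argument. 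Everything else is bookkeeping with the homotopy-colimit presentation and the positivity hypothesis $d_i>0$, which is exactly what guarantees the suspensions $\Sigma^{d_i}_T$ do not leave the effective range.
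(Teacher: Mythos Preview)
Your argument is correct, but it is more elaborate than needed, and the paper's proof is a single sentence. The paper simply observes that $\Ho\sC^\eff$ is a triangulated subcategory closed under homotopy colimits; since $\sM/(\{x_i\})$ is by definition the homotopy colimit over $\sP_\text{fin}(I)$ of objects obtained from $\sM$ by iterated cofibers $\sN\mapsto\hocofib(\Sigma^{d_i}_T\sN\to\sN)$ with $d_i>0$, each finite-stage quotient is effective by induction and the colimit remains effective. No appeal to proposition~\ref{prop:zeroth slice} or to monoidal closure of $\Ho\sC^\eff$ is required: the only monoidal input is that $\Sigma^d_T$ preserves effectivity for $d\ge0$, which already follows from lemma~\ref{lem:properties of slice functor}(2).

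Two small remarks on your route. First, proposition~\ref{prop:zeroth slice} is stated only for countable $I$, while the present lemma carries no such hypothesis; this is easily repaired by arguing directly from the definition $\1/(\{x_i\})=\hocolim_{A\in\sP_\text{fin}(I)}\1/(\{x_i:i\in A\})$ rather than through the cofiber description. Second, the ``main obstacle'' you isolate---closure of $\Ho\sC^\eff$ under $\wedge_\sR$---is a true and useful fact, and your generator-and-localizing-subcategory sketch is correct, but it is overkill here: as you yourself note, each $\sD_x(N)\otimes Q\sM$ is already a nonnegative $T$-suspension of $\sM$, so its effectivity follows without invoking general monoidal closure.
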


\begin{proof}
This follows from   lemma~\ref{lem:properties of slice functor} since $f^\sR_n$
is a triangulated functor and $\sC^\eff$ is closed under homotopy colimits.
\end{proof}

Let $A$ be an abelian group and $SA$ the topological sphere spectrum with 
$A$-coefficients. For a $T$-spectrum $\sE$ 
let us denote the spectrum $\sE\wedge (SA)$ 
by $\sE\otimes A$. Of course, if $A$ is the free abelian group on a set $S$,
then $\sE\otimes A=\oplus_{s\in S}\sE$.

Let $\{\bar{x}_i\in \sR^{-2d_i,-d_i}(S)\ |\ i\in I, d_i>0\} $
be a set of elements of $\sR$-cohomology,  with $I$ countable.  Suppose that
$\sR$ is cofibrant as an object in $\Mot$ and is in $\SH^\eff(S)$. Let $\sM$ be
in $\sC^\eff$ and let $Q\sM\to \sM$ be a cofibrant replacement. By  lemma
\ref{lem:zeroth slice}, we have a homotopy cofiber sequence in $\sC$, 
\[
\hocolim_{\sI^\circ}\sD_x\otimes Q\sM\to Q\sM\to \sM/(\{x_i\}).
\]
Clearly $\hocolim_{\sI^\circ}\sD_x\otimes Q\sM$ is in $\Sigma^1_T\Ho\sC^\eff$,
hence the above sequence induces an isomorphism in $\Ho\sC$
\[
s^\sR_0\sM\xrightarrow{\sigma_\sM}s^\sR_0( \sM/(\{x_i\})).
\]
Composing the canonical map $ \sM/(\{x_i\})\to s^\sR_0( \sM/(\{x_i\}))$ with
$\sigma_\sM^{-1}$ gives the canonical map
\[
\pi^\sR_\sM: \sM/(\{x_i\})\to s^\sR_0\sM
\]
in $\Ho\sC$.  Applying the forgetful functor gives the canonical  map in $\SH(S)$
\[
\pi_\sM: U(\sM/(\{x_i\}))\to U(s^\sR_0\sM)\cong s_0(U\sM).
\]
This equal to  the canonical map
$U(\sM/(\{x_i\}))\to s_0(U(\sM/(\{x_i\})))$ composed with the inverse of the isomorphism
$s_0(U\sM)\to s_0(U(\sM/(\{x_i\})))$.

\begin{theorem}\label{thm:slices of effective spectra}
Let $\sR$ be a commutative monoid in $\Mot(S)$, cofibrant as an object in
$\Mot(S)$, such that $\sR$ is in $\SH^\eff(S)$.
Let $X=\{\bar{x}_i\in \sR^{-2d_i,-d_i}(S)\ |\ i\in I, d_i>0\} $
be a countable set of elements of $\sR$-cohomology. Let $\sM$ be an
$\sR$-module in $\sC^\eff$ and suppose that the  canonical map $\pi_\sM:
U(\sM/(\{x_i\}))\to  s_0(U\sM)$ is an isomorphism. Then for each $n\ge0$, we
have a canonical isomorphism in $\Ho\sC$,
\[
s^\sR_n\sM\cong \Sigma^{n}_T s^\sR_0\sM
\otimes \Z[X]_n,
\]
where $\Z[X]_n$ is the abelian group of weighted-homogeneous degree $n$ polynomials 
over $\Z$ in the variables $\{x_i, i\in I \}$, $\deg x_i=d_i$. Moreover, for each $n$, we have a
canonical isomorphism in $\SH(S)$,
\[
s_nU\sM\cong \Sigma^{n}_T s_0U\sM
\otimes \Z[X]_n.
\]
\end{theorem}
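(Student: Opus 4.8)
The plan is to mirror Spitzweck's computation of the slices of $\MGL$. Fix a cofibrant replacement $Q\sM\to\sM$ and, for $n\ge 0$, set
\[
\Phi^n:=\hocolim_{\sI_{\deg\ge n}}\sD_x\otimes Q\sM ,
\]
so that the inclusions $\sI_{\deg\ge n+1}\hookrightarrow\sI_{\deg\ge n}$ produce a tower $\cdots\to\Phi^{n+1}\to\Phi^n\to\cdots\to\Phi^0$. Since $\sD_x(N)\otimes Q\sM\simeq\Sigma_T^{\deg N}\sM$ and $\sM$ is effective, every $\Phi^n$ lies in $\Sigma_T^n\Ho\sC^\eff$ (which is closed under homotopy colimits), and the weak equivalence $\hocolim_\sI\sD_x\to\1$ from the proof of Lemma~\ref{lem:zeroth slice} identifies $\Phi^0\simeq\sM=f^\sR_0\sM$. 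Two preliminary remarks: first, because weak equivalences in $\sR$-$\Mod$ are created by the forgetful functor $U$, so are isomorphisms in $\Ho\sC$, whence the hypothesis that $\pi_\sM=U(\pi^\sR_\sM)$ is an isomorphism forces $\pi^\sR_\sM:\bar\sM\to s^\sR_0\sM$ to be an isomorphism in $\Ho\sC$, where $\bar\sM:=\sM/(\{x_i\})$; since $\bar\sM$ is effective (Lemma~\ref{effectivity of quotients of effective spectra}) this is equivalent to $f^\sR_1\bar\sM=0$. Second, the monomials $x^N$ of degree $n$ form a $\Z$-basis of $\Z[X]_n$, so I will identify $\bigvee_{\deg M=n}\Sigma_T^n\bar\sM$ with $\Sigma_T^n\bar\sM\otimes\Z[X]_n$ throughout.

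The key computation is
\[
\cofib(\Phi^{n+1}\to\Phi^n)\ \simeq\ \bigvee_{M\,:\,\deg M=n}\Sigma_T^n\bar\sM\qquad(n\ge 0).
\]
Here the degree-$n$ monomials form a discrete full subcategory of $\sI_{\deg\ge n}$ all of whose objects are sinks, and for such $M$ one has $\sI_{>M}\subset\sI_{\deg\ge n+1}$. Define the $\sI_{\deg\ge n}$-diagram $\sG$ to agree with $\sD_x\otimes Q\sM$ on $\sI_{\deg\ge n+1}$ and to have value $\hocolim_{\sI_{>M}}\sD_x\otimes Q\sM$ at each degree-$n$ monomial $M$, with structure maps $\sG(N)\to\sG(M)$ the canonical maps into the homotopy colimit; rigidifying $\sG$ (and the map below) by the cone construction of Lemma~\ref{lem:Cofib} if necessary, the hypothesis of Proposition~\ref{prop:induction step for higher slices} holds for $\sG$, since for each degree-$n$ monomial $M$ the map $\hocolim\sG|_{\sI_{>M}}\to\sG(M)$ is the identity; hence $\Phi^{n+1}=\hocolim_{\sI_{\deg\ge n+1}}\sG\xrightarrow{\sim}\hocolim_{\sI_{\deg\ge n}}\sG$. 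Taking the pointwise homotopy cofiber of the evident map $\sG\to\sD_x\otimes Q\sM$ gives a diagram $\sQ$ which is the zero object on $\sI_{\deg\ge n+1}$ and equals $\cofib\bigl(\hocolim_{\sI_{>M}}\sD_x\otimes Q\sM\to\sD_x(M)\otimes Q\sM\bigr)$ at a degree-$n$ monomial $M$; reindexing $\sI_{\ge M}\cong\sI$ by $N\mapsto N-M$ and applying Proposition~\ref{prop:zeroth slice} to the cofibrant module $\sD_x(M)\otimes Q\sM\simeq\Sigma_T^n\sM$ identifies this with $(\Sigma_T^n\sM)/(\{x_i\})\simeq\Sigma_T^n\bar\sM$. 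Since $\sQ$ is trivial away from the sinks, $\hocolim_{\sI_{\deg\ge n}}\sQ\simeq\bigvee_{\deg M=n}\Sigma_T^n\bar\sM$, and passing to homotopy colimits over $\sI_{\deg\ge n}$ in the cofiber sequence of diagrams $\sG\to\sD_x\otimes Q\sM\to\sQ$ produces the cofiber sequence $\Phi^{n+1}\to\Phi^n\to\bigvee_{\deg M=n}\Sigma_T^n\bar\sM$.

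Now I identify $\Phi^\bullet$ with the slice tower of $\sM$ by induction on $n$, the case $n=0$ being recorded above. Assume $\Phi^n\simeq f^\sR_n\sM$ compatibly with the maps to $\sM$. Using the previous step, the identities $f^\sR_{n+1}\Sigma_T^n=\Sigma_T^n f^\sR_1$ (iterating Lemma~\ref{lem:properties of slice functor}(2)), the fact that $f^\sR_{n+1}$ commutes with homotopy colimits (Lemma~\ref{lem:properties of slice functor}(3)), and $f^\sR_1\bar\sM=0$, the object $\cofib(\Phi^{n+1}\to\Phi^n)$ is annihilated by $f^\sR_{n+1}$; moreover $\cofib(\Phi^n\to\sM)=\cofib(f^\sR_n\sM\to\sM)$ is annihilated by $f^\sR_n$, hence by $f^\sR_{n+1}$ because $\Sigma_T^{n+1}\Ho\sC^\eff\subset\Sigma_T^n\Ho\sC^\eff$. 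The octahedral axiom then gives $f^\sR_{n+1}\cofib(\Phi^{n+1}\to\sM)=0$, and applying $f^\sR_{n+1}$ to the triangle $\Phi^{n+1}\to\sM\to\cofib(\Phi^{n+1}\to\sM)$, together with $f^\sR_{n+1}\Phi^{n+1}=\Phi^{n+1}$ (as $\Phi^{n+1}\in\Sigma_T^{n+1}\Ho\sC^\eff$), yields an isomorphism $\Phi^{n+1}\xrightarrow{\sim}f^\sR_{n+1}\sM$, compatible with the maps to $\sM$ and in the tower by naturality of $f^\sR_{n+1}\to\id$ and $f^\sR_{n+1}\to f^\sR_n$. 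Consequently
\[
s^\sR_n\sM=\cofib(f^\sR_{n+1}\sM\to f^\sR_n\sM)\simeq\cofib(\Phi^{n+1}\to\Phi^n)\simeq\Sigma_T^n\bar\sM\otimes\Z[X]_n\simeq\Sigma_T^n s^\sR_0\sM\otimes\Z[X]_n,
\]
using $\bar\sM\cong s^\sR_0\sM$. For the last statement apply $U$: by Lemma~\ref{lem:properties of slice functor}(4) $U s^\sR_n\sM\cong s_n U\sM$ and $U s^\sR_0\sM\cong s_0 U\sM$, while $U$ commutes with $\Sigma_T$ and with $-\otimes\Z[X]_n$.

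I expect the main obstacle to lie in the second paragraph: one must upgrade the auxiliary diagram $\sG$ and the map $\sG\to\sD_x\otimes Q\sM$ from homotopy-coherent to strictly functorial data so that Proposition~\ref{prop:induction step for higher slices} applies and so that forming pointwise homotopy cofibers commutes with $\hocolim$ --- this is exactly what the cone constructions of Lemma~\ref{lem:Cofib} and the simplicial-bar models of \S\ref{general quotients} are for --- and one must check that the reindexing $\sI_{\ge M}\cong\sI$ is genuinely compatible with the diagram $\sD_x$ for the shifted module $\sD_x(M)\otimes Q\sM$, up to the coherences supplied by the multiplication maps $\mu_\1$. The remaining steps are formal: closure of $\Sigma_T^q\Ho\sC^\eff$ under homotopy colimits, routine manipulations with the idempotent triangulated functors $f^\sR_q$, and compatibility of $U$ with suspensions, direct sums and slices.
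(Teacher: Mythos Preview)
Your argument is correct and arrives at the same conclusion, but the organization differs from the paper's in a way worth noting. The paper also proves $\Phi^n\simeq f^\sR_n\sM$ by induction, but it builds the auxiliary diagram differently: instead of your $\sG$, it applies the \emph{model-level} truncation $\tilde f^\sR_{n+1}$ objectwise to $\sD_x\otimes\sM|_{\deg\ge n}$. This is automatically a strict functor, so Proposition~\ref{prop:induction step for higher slices} applies without any rigidification; the hypothesis is verified by the chain
\[
\hocolim\tilde f^\sR_{n+1}\sD_{>M}\otimes\sM\ \simeq\ \Sigma^n_T f^\sR_1\hocolim_{\sI^\circ}\sD_x\otimes\sM\ \simeq\ \Sigma^n_T f^\sR_1 f^\sR_1\sM\ \simeq\ f^\sR_{n+1}\sD(M)\otimes\sM,
\]
which uses only the base case $\Phi^1\simeq f^\sR_1\sM$. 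The induction step is then the single line $f^\sR_{n+1}\sM\cong f^\sR_{n+1}\Phi^n\cong\hocolim\tilde f^\sR_{n+1}\sD|_{\deg\ge n}\cong\hocolim\tilde f^\sR_{n+1}\sD|_{\deg\ge n+1}\cong\Phi^{n+1}$, and the slices drop out as the hocolim of the pointwise cofiber $\tilde f^\sR_{n+1}[\sD]\to\sD$. By contrast, you first compute the layers $\cofib(\Phi^{n+1}\to\Phi^n)$ directly via your $\sG$ and Proposition~\ref{prop:zeroth slice}, and only afterwards identify $\Phi^\bullet$ with the slice tower using the octahedral axiom and $f^\sR_1\bar\sM=0$.

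What each approach buys: the paper's use of $\tilde f^\sR_{n+1}$ cleanly avoids the strictification problem you flag in your last paragraph --- your $\sG$ is really the homotopy left Kan extension of $\sD_x\otimes Q\sM|_{\deg\ge n+1}$ along the inclusion into $\sI_{\deg\ge n}$, and making that a genuine diagram with a genuine map to $\sD_x\otimes Q\sM$ takes work you do not carry out (the cone construction of Lemma~\ref{lem:Cofib} is not quite the right tool; one would rather use a functorial bar model for the Kan extension). On the other hand, your route has the pleasant feature that the layer computation is decoupled from the inductive identification, and your octahedral argument is a nice alternative to the idempotence manipulation. If you want to keep your structure but remove the soft spot, simply replace $\sG$ by $\tilde f^\sR_{n+1}[\sD_x\otimes Q\sM|_{\deg\ge n}]$: it is pointwise weakly equivalent to your $\sG$ (at degree-$n$ monomials both are $\Sigma^n_T f^\sR_1\sM\simeq\hocolim_{\sI_{>M}}\sD_x\otimes Q\sM$, using the base case), and the rest of your argument goes through unchanged.
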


\begin{proof} Replacing $\sM$ with a cofibrant model, we may assume that $\sM$
is cofibrant in $\sC$; as $\sR$ is cofibrant in $\Mot$, it follows that $U\sM$
is cofibrant in $\Mot$.

Since $\pi_\sM=U(\pi^\sR_\sM)$, our assumption on $\pi_\sM$ is the same as assuming that
$\pi^\sR_\sM$ is an isomorphism in $\Ho\sC$. By construction, $\pi^\sR_\sM$
extends to a map of distinguished triangles
\[
\xymatrix{
(\hocolim_{\sI^\circ}\sD_x)\otimes \sM\ar[r]\ar[d]_\alpha&\sM\ar[r]\ar@{=}[d]&\sM/(\{x_i\})\ar[d]^{\pi^\sR_\sM}\ar[r]&\Sigma(\hocolim_{\sI^\circ}\sD_x)\otimes \sM\ar[d]^{\Sigma\alpha}\\
f^\sR_1\sM\ar[r]&\sM\ar[r]& s^\sR_0\sM\ar[r]&\Sigma f^\sR_1\sM,
}
\]
and thus the map $\alpha$ is an isomorphism. We note that $\alpha$ is equal to
the canonical map given by the universal property of $f^\sR_1\sM\to\sM$.

We will now identify $f^\sR_n\sM$   in terms of the
diagram $\sD_x|_{\sI_{\deg \ge n}}\otimes \sM$, proving by induction on $n\ge1$
that
the canonical map $\hocolim \sD_x\otimes \sM|_{\deg \ge n}\to f^\sR_n\sM$ in
$\Ho\sC$ is an isomorphism.

As $\sI^\circ=\sI_{\deg\ge1}$, the case $n=1$ is settled.  Assume the result for
$n$. We claim that the diagram
\[
\tilde{f}^\sR_{n+1}[\sD_x\otimes\sM|_{\deg \ge n}]:\sI_{\deg \ge n}\to \sC
\]
satisfies the hypotheses of  proposition~\ref{prop:induction step for higher
slices}. 
That is,  we need to verify that for every monomial $M$ of degree $n$
the natural map 
\[
\hocolim  \tilde{f}^\sR_{n+1}[\sD_{> M}\otimes\sM]\to 
\tilde{f}^\sR_{n+1}[\sD(M)\otimes\sM]
\]
is a weak equivalence in 
$\sC$.  This follows by the string of isomorphisms in $\Ho\sC$
\begin{align*}
\hocolim \tilde{f}^\sR_{n+1} \sD_{> M}\otimes\sM&\cong\hocolim
\tilde{f}^\sR_{n+1}
\Sigma^n_T\sD_{\deg  \geq 1}\otimes\sM \\
&\cong \hocolim  \Sigma^n_T \tilde{f}^\sR_{1}
\sD_{\deg  \geq 1}\otimes\sM\\
&\cong\Sigma^n_T   f^\sR_{1}\hocolim\sD_{\deg \geq 1}\otimes\sM\\
&\cong \Sigma^n_T f^\sR_{1}  f^\sR_1 \sM\\
&\cong \Sigma^n_T f^\sR_{1} \sM\\
&\cong f^\sR_{n+1}  \Sigma^n_T \sM\\
&\cong  f^\sR_{n+1}[\sD(M)\otimes\sM].
\end{align*}
Applying proposition~\ref{prop:induction step for higher slices} and our
induction hypothesis gives us the string of  isomorphisms in $\Ho\sC$
\begin{multline*}
f^\sR_{n+1}\sM\cong  f^\sR_{n+1}f^\sR_n\sM\cong
f^\sR_{n+1}\hocolim[\sD_x\otimes\sM|_{\deg \ge
n}]\\
\cong
 \hocolim \tilde{f}^\sR_{n+1}[\sD_x\otimes\sM|_{\deg \ge
n}]
\cong \hocolim \tilde{f}^\sR_{n+1}[\sD_x\otimes\sM|_{\deg \ge n+1}]\\
\cong  \hocolim\sD_x\otimes\sM|_{\deg \ge n+1},
\end{multline*}
the last isomorphism following from the fact that $\sD_x(x^N)\otimes\sM$ is in
$\Sigma^{|N|}_T\sC^\eff$, and hence the canonical map 
$\tilde{f}^\sR_{n+1}[\sD_x\otimes\sM]\to  \sD_x\otimes\sM$ is an objectwise weak equivalence
on $\sI_{\deg\ge n+1}$.

For the  slices $s_n$ we have
\begin{align*}
s^\sR_n\sM&:= \hocofib(\tilde{f}^\sR_{n+1}\sM\to \tilde{f}^\sR_n\sM)\cong
 \hocofib(\tilde{f}^\sR_{n+1}\tilde{f}^\sR_n\sM\to \tilde{f}^\sR_n\sM)\\
&\cong \hocofib( \hocolim  \tilde{f}^\sR_{n+1}[\sD_{\deg \geq n}\otimes\sM]\to
\hocolim\sD_{\deg  \geq n}\otimes\sM)\\ 
&\cong \hocolim\hocofib(\tilde{f}^\sR_{n+1}[\sD_{\deg \geq n}\otimes\sM]\to  
\sD_{\deg  \geq n}\otimes\sM).
\end{align*}
At a monomial of degree greater than 
$n$,  the canonical map $\tilde{f}^\sR_{n+1}[\sD_{\deg\geq n}\otimes\sM]\to  
\sD_{\deg  \geq n}\otimes\sM$ is a weak equivalence, and at a monomial $M$ of
degree $n$ the homotopy cofiber is given by 
\begin{multline*}
\hocofib(\tilde{f}^\sR_{n+1}[\sD(M)\otimes\sM]\to  \sD(M)\otimes\sM)
=\hocofib((\tilde{f}^\sR_{n+1}[\Sigma^n_T\sM]\to  \Sigma^n_T\sM)\\
\cong \hocofib(\Sigma^n_T \tilde{f}^\sR_1\sM\to   \Sigma^n_T\sM)
\cong \Sigma^n_T  s^\sR_0\sM
\end{multline*}

Let $\tilde{s}_0^\sR$ be the functor on $\sC^\eff$, $\sN\mapsto \hocofib(\tilde{f}^\sR_1\sN\to \sN)$,
and let $F_n\sM:\sI_{\deg \ge n}\to \sC^\eff$ be the diagram 
\[
F_n(M)=\begin{cases} 
pt&\text{ for }\deg M>n\\ \Sigma^n_T\tilde{s}^\sR_0\sM&\text{  for } \deg
M=n. 
\end{cases}
\]
We thus have a weak equivalence of pointwise cofibrant functors
\[
\hocofib(\tilde{f}^\sR_{n+1}[\sD_{\deg \geq n}\otimes\sM]\to  
\sD_{\deg  \geq n}\otimes\sM)\to F_n:\sI_{\deg\ge n}\to \sC,
\]
and therefore a weak equivalence on the homotopy colimits. As we have the
evident isomorphism in $\Ho\sC$
\[
\hocolim_{\sI_{\deg\ge n}}F_n\cong \oplus_{M, \deg M=n}\Sigma^{n}_T
s^\sR_0\sM,
\]
 this gives us the desired isomorphism $s^\sR_n\sM\cong  \Sigma^{n}_T
s^\sR_0\sM\otimes \Z[X]_n$ in $\Ho\sC$. Applying the forgetful functor and
using lemma~\ref{lem:properties of slice functor}  gives the isomorphism
 $s_nU\sM\cong  \Sigma^{n}_T s_0U\sM\otimes\Z[X]_n$  in $\SH(S)$.
\end{proof}

\begin{corollary}\label{cor:slices of localized spectra}
Let $\sR$, $X$ and $\sM$ be as in theorem~\ref{thm:slices of effective spectra}.
Let $Z=\{z_j\in \Z[X]_{e_j}\}$ be a collection of homogeneous
 elements of $\Z[X]$,  and let $\sM[Z^{-1}]\in \sC$ be the localization
of $\sM$ with respect to the collection of maps $\times z_j:\sM\to
\Sigma^{-e_j}_T\sM$. Then there are natural isomorphisms
\[
s^\sR_n\sM[Z^{-1}]\cong \Sigma^{n}_T s^\sR_0\sM
\otimes \Z[X][Z^{-1}]_n,
\]
\[
s_nU\sM[Z^{-1}]\cong \Sigma^{n}_T s_0U\sM
\otimes \Z[X][Z^{-1}]_n.
\]
\end{corollary}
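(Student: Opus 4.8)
The plan is to obtain the localized statement directly from Theorem~\ref{thm:slices of effective spectra} by exploiting that localization is a filtered homotopy colimit and that each $\tilde f^\sR_q$ commutes with homotopy colimits (lemma~\ref{lem:properties of slice functor}(3)). First I would set up $\sM[Z^{-1}]$ as the (sequential, or filtered) homotopy colimit of the diagram whose terms are copies of suspensions $\Sigma^{-N}_T\sM$ indexed by the monoid of monomials in the $z_j$, with transition maps given by multiplication by the $z_j$. Concretely, if $Z=\{z_1,z_2,\ldots\}$ with $\deg z_j=e_j$, one takes $\sM[Z^{-1}]=\hocolim$ over the poset of tuples $(k_1,k_2,\ldots)$ (finitely supported) of $\Sigma^{-\sum k_je_j}_T\sM$, with the map from $(k_j)$ to $(k_j')$ (for $k'_j\ge k_j$) being $\times\prod z_j^{k'_j-k_j}$. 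Since $\sM$ is effective and each $z_j$ has positive degree $e_j$, every term $\Sigma^{-\sum k_je_j}_T\sM$ is in $\Sigma^{-N}_T\sC^\eff$, so $\sM[Z^{-1}]$ need not be effective, but that is fine: I do not apply the slice computation to $\sM[Z^{-1}]$ directly, only to $\sM$ itself.

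Next I would compute $s^\sR_n$ of this homotopy colimit. By lemma~\ref{lem:properties of slice functor}(2)--(3), $\tilde f^\sR_q$ commutes with $\Sigma_T$ (up to the index shift) and with $\hocolim$, hence so does $\tilde s^\sR_q=\hocofib(\tilde f^\sR_{q+1}\to \tilde f^\sR_q)$; therefore
\[
s^\sR_n\sM[Z^{-1}]\cong \hocolim_{(k_j)} s^\sR_n\bigl(\Sigma^{-\sum k_je_j}_T\sM\bigr)\cong \hocolim_{(k_j)} \Sigma^{-\sum k_je_j}_T\, s^\sR_{n+\sum k_je_j}\sM .
\]
Now I feed in the formula from Theorem~\ref{thm:slices of effective spectra}: $s^\sR_{m}\sM\cong \Sigma^{m}_T s^\sR_0\sM\otimes\Z[X]_{m}$ for $m\ge 0$ (and is $0$ for $m<0$ since $\sM$ is effective, consistent with $\Z[X]_m=0$ there). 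Substituting $m=n+\sum k_je_j$ and cancelling the $\Sigma$'s gives the $(k_j)$-term $\Sigma^n_T s^\sR_0\sM\otimes \Z[X]_{n+\sum k_je_j}$, and the transition map $\times\prod z_j^{k'_j-k_j}$ becomes, on the right-hand factor, the multiplication map $\Z[X]_{n+\sum k_je_j}\xrightarrow{\cdot\prod z_j^{k'_j-k_j}}\Z[X]_{n+\sum k'_je_j}$. Hence
\[
s^\sR_n\sM[Z^{-1}]\cong \Sigma^n_T s^\sR_0\sM\otimes\colim_{(k_j)}\Z[X]_{n+\sum k_je_j}\cong \Sigma^n_T s^\sR_0\sM\otimes \Z[X][Z^{-1}]_n,
\]
the last identification being the purely algebraic fact that localizing the graded ring $\Z[X]$ at the homogeneous multiplicative set generated by $Z$ computes the degree-$n$ part as exactly this colimit (the tensor factor $\Sigma^n_T s^\sR_0\sM\otimes(-)$ commutes with the filtered colimit of abelian groups since it is a sum of copies). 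Applying the forgetful functor $U$ and lemma~\ref{lem:properties of slice functor}(4) transports this to $\SH(S)$, giving the second displayed isomorphism.

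The steps that need care, and where I expect the main friction, are two. First, the bookkeeping of naturality: one must check that the isomorphism of Theorem~\ref{thm:slices of effective spectra} is natural enough in $\sM$ (under the self-maps $\times z_j:\sM\to\Sigma^{-e_j}_T\sM$, equivalently under $\Sigma^{e_j}_T(\times z_j):\Sigma^{e_j}_T\sM\to\sM$) that the transition maps really do induce multiplication by $z_j$ on the $\Z[X]$-factor. This is where I would spend the effort: tracing through the construction of the isomorphism $s^\sR_n\sM\cong\Sigma^n_Ts^\sR_0\sM\otimes\Z[X]_n$ via the diagrams $\sD_x\otimes\sM|_{\deg\ge n}$ and the identification $\hocolim_{\sI_{\deg\ge n}}F_n\cong\oplus_{\deg M=n}\Sigma^n_Ts^\sR_0\sM$, and verifying that a self-map of $\sM$ induces on $F_n$ the expected map (this is essentially formal, as everything in that construction is natural in $\sM$, but it must be said). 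Second, one should make sure $\hocolim$ and $\colim$ agree for this filtered diagram (so that ``$\hocolim$ of the $\Z[X]_m$'s'' is literally the algebraic localization) — this is standard for filtered homotopy colimits of discrete abelian groups / suspension spectra, and I would just cite the relevant Reedy/cofinality facts already invoked in the proof of lemma~\ref{lem:zeroth slice}. No genuinely new idea is required beyond Theorem~\ref{thm:slices of effective spectra}; the corollary is a formal consequence of that theorem together with the commutation of slices with homotopy colimits and suspension.
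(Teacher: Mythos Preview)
Your proposal is correct and follows essentially the same approach as the paper: both arguments write $\sM[Z^{-1}]$ as a filtered homotopy colimit of suspensions $\Sigma^{-|N|}_T\sM$ indexed by monomials in the $z_j$, use that the slice (or truncation) functors commute with homotopy colimits and $T$-suspension to pull $s^\sR_n$ inside, apply theorem~\ref{thm:slices of effective spectra} termwise, and then identify the transition maps with multiplication by the $z_j$ on the $\Z[X]$-factor so that the colimit computes $\Z[X][Z^{-1}]_n$. The paper phrases the commutation step via the universal property of $f^\sR_q$ rather than directly invoking lemma~\ref{lem:properties of slice functor}(3), and it explicitly records the key compatibility ``via this isomorphism, the map $\times z_j$ goes over to $\id_{\Sigma^q_T s^\sR_0\sM}\otimes\times z_j$''---exactly the naturality point you flagged as the first place needing care.
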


\begin{proof} Each map $\times z_j:\sM\to \Sigma^{-e_j}_T\sM$ induces the
isomorphism  $\times z_j:\sM[Z^{-1}]\to \Sigma^{-e_j}_T\sM[Z^{-1}]$ in $\Ho\sC$, with inverse
$\times z^{-1}_j:\Sigma_T^{-e_j}\sM[Z^{-1}]\to \sM[Z^{-1}]$. Applying $f_q^\sR$ gives us the map in $\Ho\sC$
\[
\times z_j:f_q^\sR\sM\to f_q^\sR\Sigma^{-e_j}_T\sM\cong
\Sigma^{-e_j}_Tf_{q+e_j}^\sR\sM.
\]

As $f_{q+e_j}^\sR\sM$ is in $\Sigma_T^{q+e_j}\Ho\sC^\eff$,  both
$\Sigma^{-e_j}_Tf_{q+e_j}^\sR\sM
$ and $f_q^\sR\sM$ are in $\Sigma_T^q\Ho\sC^\eff$. The composition
\[
\Sigma^{-e_j}_Tf_{q+e_j}^\sR\sM\to \Sigma^{-e_j}_T \sM\xrightarrow{\times
z^{-1}_j}\sM[Z^{-1}]
\]
gives via the universal property of $f_q^\sR$ the map $\Sigma^{-e_j}_Tf_{q+e_j}^\sR\sM\to
f_q^\sR\sM[Z^{-1}]$. Setting $|N|=\sum_jN_je_j$, this extends to give a map of the system of monomial
multiplications
\[
\times z^{N-M}:\Sigma^{-|N|}_Tf_{q+|N|}^\sR\sM\to
\Sigma^{-|M|}_Tf_{q+|M|}^\sR\sM
\]
to $f_q^\sR\sM[Z^{-1}]$; the universal property of the truncation functors $f_n$
and of localization shows that this system induces an isomorphism
\[
\hocolim_{N\in \sI^\op}\Sigma^{-|N|}_Tf_{q+|N|}^\sR\sM\cong f^\sR_q\sM[Z^{-1}]
\]
in $\Ho\sC$.  As the slice functors $s_q$ are exact and commute with $\hocolim$, we have a similar collection of
isomorphisms
\[
\hocolim_{N\in \sI^\op}\Sigma^{-|N|}_Ts_{q+|N|}^\sR\sM\cong s_q(\sM[Z^{-1}]).
\]
Theorem~\ref{thm:slices of effective spectra} gives us the natural isomorphisms
\[
\Sigma^{-|N|}_Ts_{q+|N|}^\sR\sM\cong  \Sigma^q_T s^\sR_0\sM
\otimes \Z[X]_{q+|N|};
\]
via this isomorphism, the map $\times z_j$ goes over to  $\id_{\Sigma^q_T s^\sR_0\sM}\otimes \times z_j$, 
which yields the result.
\end{proof}

\begin{corollary}\label{cor:slices of quotient spectra}
Let $\sR$, $X$ and $\sM$ be as in theorem~\ref{thm:slices of effective spectra}.
Let $Z=\{z_j\in \Z[X]_{e_j}\}$ be a collection of homogeneous
 elements of $\Z[X]$,   and let $\sM[Z^{-1}]\in \sC$ be the localization
of $\sM$ with respect to the collection of maps $\times z_j:\sM\to
\Sigma^{-e_j}_T\sM$. Let $m\ge2$ be an integer. We let
$\sM[Z^{-1}]/m:=\hocofib\times m:\sM[Z^{-1}]\to \sM[Z^{-1}]$. Then there are
natural isomorphisms
\[
s^\sR_n\sM[Z^{-1}]/m\cong \Sigma^{n}_T s^\sR_0\sM/m
\otimes \Z[X][Z^{-1}]_n,
\]
\[
s_nU\sM[Z^{-1}]/m\cong \Sigma^{n}_T s_0U\sM/m
\otimes \Z[X][Z^{-1}]_n.
\]
\end{corollary}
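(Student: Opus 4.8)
The plan is to deduce this directly from Corollary~\ref{cor:slices of localized spectra} using only formal properties of the slice functors, since $\sM[Z^{-1}]/m$ is by definition the homotopy cofiber of the ``scalar'' map $\times m = m\cdot\id$ on $\sM[Z^{-1}]$.

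First I would record the distinguished triangle
\[
\sM[Z^{-1}] \xrightarrow{\times m} \sM[Z^{-1}] \to \sM[Z^{-1}]/m \to \Sigma\sM[Z^{-1}]
\]
in $\Ho\sC$ and apply $s^\sR_n$. Each $s^\sR_n$ is a triangulated, hence additive, endofunctor of $\Ho\sC$, so it sends this triangle to a distinguished triangle and sends $\times m = m\cdot\id$ to $m\cdot\id = \times m$; thus $s^\sR_n(\sM[Z^{-1}]/m)\cong \hocofib(\times m\colon s^\sR_n\sM[Z^{-1}]\to s^\sR_n\sM[Z^{-1}])$. Next I would plug in the natural isomorphism $s^\sR_n\sM[Z^{-1}]\cong \Sigma^n_T s^\sR_0\sM\otimes\Z[X][Z^{-1}]_n$ from Corollary~\ref{cor:slices of localized spectra}. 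Because $\Z[X][Z^{-1}]_n$ is a free abelian group, $\Sigma^n_T s^\sR_0\sM\otimes\Z[X][Z^{-1}]_n$ is a direct sum of copies of $\Sigma^n_T s^\sR_0\sM$ on which $\times m$ acts diagonally; since homotopy cofibers commute with direct sums and with $\Sigma^n_T$, I would conclude
\[
\hocofib(\times m\colon s^\sR_n\sM[Z^{-1}]\to s^\sR_n\sM[Z^{-1}]) \cong \Sigma^n_T\,\hocofib(\times m\colon s^\sR_0\sM\to s^\sR_0\sM)\otimes\Z[X][Z^{-1}]_n,
\]
and the right-hand cofiber is $s^\sR_0\sM/m$ by definition. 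The statement in $\SH(S)$ then follows by applying the exact forgetful functor $U$ and invoking lemma~\ref{lem:properties of slice functor}(4) to identify $U\circ s^\sR_n$ with $s_n\circ U$ and $U(s^\sR_0\sM/m)$ with $(s_0 U\sM)/m$.

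There is no real obstacle here: the statement is a purely formal consequence of Corollary~\ref{cor:slices of localized spectra} together with additivity and exactness of the slice functors. The one point deserving a line of care is the compatibility of the Corollary~\ref{cor:slices of localized spectra} isomorphism with $\times m$, but this is automatic because $\times m$ is $m\cdot\id$ and every functor and natural isomorphism in sight is additive; a secondary bookkeeping point is that $\hocofib$ commutes with the (possibly countably infinite) direct sum $(-)\otimes\Z[X][Z^{-1}]_n$, which holds because countable coproducts are exact in the triangulated categories at hand.
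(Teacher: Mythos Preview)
Your proposal is correct and takes essentially the same approach as the paper, which dispatches the corollary in a single sentence: it follows directly from corollary~\ref{cor:slices of localized spectra}, noting that $s_n^\sR$ and $s_n$ are exact functors. Your write-up simply unpacks this one-line justification, making explicit the distinguished triangle argument and the commutation of the cofiber with the free-abelian-group tensor that the paper leaves to the reader.
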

This follows directly from corollary~\ref{cor:slices of localized spectra},
noting that $s_n^\sR$ and $s_n$ are exact functors.

\begin{remark}\label{rem:Loc} Let $P$ be a multiplicatively closed subset of
$\Z$. We may replace $\Mot$ with its localization $\Mot[P^{-1}]$ with respect to
$P$ in theorem~\ref{thm:slices of effective spectra},  corollary~\ref{cor:slices of localized spectra} and corollary~\ref{cor:slices of quotient spectra}, and obtain a corresponding
description of $s_n^\sR\sM$ and $s_nU\sM$ for a commutative monoid $\sR$ in
$\Mot[P^{-1}]$ and an effective $\sR$-module $\sM$. 

For $P=\Z\setminus\{p^n, n=1, 2,\ldots\}$, we write $\Mot\otimes\Z_{(p)}$ for
$\Mot[P^{-1}]$ and $\SH(S)\otimes\Z_{(p)}$ for $\Ho\Mot\otimes\Z_{(p)}$. 
\end{remark}

\section{The slice spectral sequence}\label{sec:SliceSS}
The slice tower in $\SH(S)$ gives us the slice spectral sequence, for $\sE\in
\SH(S)$, $X\in \Sm/S$, $n\in \Z$,
\begin{equation}\label{eqn:SliceTower0} 
E_2^{p,q}(n):=(s_{-q}(\sE))^{p+q, n}(X) \Longrightarrow \sE^{p+q, n}(X).
\end{equation}
This spectral sequence is not always convergent, however, we do have a
convergence criterion:

\begin{lemma}[\hbox{\cite[lemma 2.1]{LevineExact}}] \label{lem:SliceConv}
Suppose that $S=\Spec k$, $k$ a perfect field. Take $\sE\in \SH(S)$. Suppose
that there is a non-decreasing function $f:\Z\to \Z$ with $\lim_{n\to
\infty}f(n)=\infty$, such that $\pi_{a+b,b}\sE=0$ for $a\le f(b)$. Then the for
all $Y$, and all $n\in\Z$, the spectral sequence \eqref{eqn:SliceTower0} is strongly
convergent.\footnote{As spectral sequence $\{E_r^{pq}\}\Rightarrow G^{p+q}$ {\em
converges strongly} to $G^*$ if for each $n$, the spectral sequence filtration
$F^*G^n$ on $G^n$ is finite and exhaustive, there is an $r(n)$ such that  for
all $p$ and all $r\ge r(n)$, all differentials entering and leaving $E_r^{p,
n-p}$  are zero and the resulting maps $E_r^{p, n-p}\to E_\infty^{p,
n-p}=\gr^p_FG^n$ are all isomorphisms.}  
\end{lemma}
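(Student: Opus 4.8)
The plan is to treat \eqref{eqn:SliceTower0} as the spectral sequence of the exact couple attached to the slice tower \eqref{slice tower}, evaluated on $Y$ in weight $n$, with filtration $F^p\sE^{\ast,n}(Y):=\operatorname{im}\bigl((f_p\sE)^{\ast,n}(Y)\to\sE^{\ast,n}(Y)\bigr)$, and to verify Boardman's criterion for strong convergence: that the spectral sequence converges \emph{conditionally} to $\sE^{\ast,n}(Y)$ and that its derived $E_\infty$-term vanishes. Conditional convergence is the conjunction of $\hocolim_{q\to-\infty}f_q\sE\ \xrightarrow{\ \sim\ }\ \sE$ and $\holim_{q\to+\infty}f_q\sE\ \simeq\ 0$, while the derived $E_\infty$-term vanishes once one knows that, for each $Y$ and each total degree $a$, only finitely many of the groups $E_2^{p,q}(n)=(s_{-q}\sE)^{p+q,n}(Y)$ with $p+q=a$ are non-zero. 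Given these, the filtration on each $\sE^{\ast,n}(Y)$ becomes finite and exhaustive and the pages stabilize in every bidegree, which is precisely strong convergence in the sense of the footnote. So I would split the proof into (I) exhaustiveness, (II) completeness, and (III) local finiteness of the $E_2$-page.

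Part (I) is formal and uses no hypothesis on $\sE$. By lemma~\ref{lem:properties of slice functor}(2), $\Sigma^{a,n}f_q\sE\simeq f_{q+n}\Sigma^{a,n}\sE$, and since $\Sigma^\infty_T Y_+\in\SH^\eff(S)\subseteq\Sigma^{q+n}_T\SH^\eff(S)$ for $q\le-n$, the adjunction defining $f_{q+n}$ makes $(f_q\sE)^{a,n}(Y)\to\sE^{a,n}(Y)$ an isomorphism for $q\le-n$. Since $\Sigma^\infty_T Y_+$ is compact, $[\Sigma^\infty_T Y_+,-]$ commutes with the homotopy colimit; hence $F^p\sE^{a,n}(Y)=\sE^{a,n}(Y)$ for $p\le-n$ and the filtration is exhaustive.

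For part (III) I would use the structure of the slices. From the slice triangle $f_{q+1}\sE\to f_q\sE\to s_q\sE$ together with the fact that $\pi_{a+b,b}(f_q\sE)\to\pi_{a+b,b}(\sE)$ is an isomorphism for $b\ge q$ (the same adjunction), the homotopy sheaves of $s_q\sE$ vanish in weights $>q$; consequently $(s_{-q}\sE)^{a,n}(Y)=0$ as soon as $q>n$. For a lower bound on $q$ one uses that each slice is an $H\Z$-module — invoking the Hopkins--Morel--Hoyois isomorphism \cite{Hoyois} in positive characteristic — so that $\Sigma^{-q}_T s_q\sE$ is an effective motive; feeding the hypothesis on $\sE$ through the slice filtration bounds the Tate weights and cohomological amplitude of these motives in terms of $q$, and then the Nisnevich cohomological-dimension bound for $Y$ forces $(s_{-q}\sE)^{a,n}(Y)=0$ for $q$ sufficiently negative relative to $a$, $n$ and $\dim Y$. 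Thus in each total degree $a$ only finitely many $(p,q)$ with $p+q=a$ can give a non-zero $E_2^{p,q}(n)$.

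Part (II), completeness of the slice tower, is where I expect the real difficulty, and where the hypothesis together with the perfectness of $k$ do the work. The point is that $\pi_{a+b,b}\sE=0$ for $a\le f(b)$ with $f$ non-decreasing and $f(b)\to\infty$ makes $\sE$ — and, after propagating through the homotopy (co)limits defining the functors $\tilde f_q$ and through the finitely many slices $s_d\sE,\dots,s_{q-1}\sE$ that control $f_q\sE$ in a fixed weight $d$ — uniformly bounded below, in each fixed weight, in the homotopy $t$-structure; here Morel's stable connectivity theorem (valid because $k$ is perfect) is what converts effectivity into such a bound. Combined with the slices being $H\Z$-modules, this makes the weight-$d$ part of $f_q\sE$ stabilize and then become acyclic as $q\to+\infty$, so that $\lim_q(f_q\sE)^{c,d}(Y)=0$ and its derived functor vanishes for all $c,d,Y$; by the Milnor sequence $\holim_{q\to+\infty}f_q\sE\simeq0$. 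With (I), (II) and (III) in hand, Boardman's criterion delivers strong convergence of \eqref{eqn:SliceTower0}: the filtration on each $\sE^{\ast,n}(Y)$ is exhaustive by (I), Hausdorff and complete by (II), and finite because $\operatorname{gr}^p_F\sE^{a,n}(Y)$ is a subquotient of $E_2^{p,a-p}(n)$, which by (III) vanishes for all but finitely many $p$; and the pages stabilize, giving the isomorphisms $E_r^{p,a-p}(n)\cong E_\infty^{p,a-p}(n)$ for $r$ large. The crux is the uniform weighted boundedness of $\sE$, $f_q\sE$ and $s_q\sE$ underlying part (II), and, to a lesser extent, the amplitude bounds on the slice motives in part (III).
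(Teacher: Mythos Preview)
The paper does not prove this lemma; it is cited from \cite[lemma~2.1]{LevineExact}, so there is no in-paper argument to compare against. Your Boardman framework and Part~(I) are correct, and you rightly identify (II) as the crux.

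Two comments. First, your route through $H\Z$-modules in (III) is both misattributed and unnecessary. The Hopkins--Morel--Hoyois isomorphism \cite{Hoyois} concerns $s_0\MGL$, not $s_0$ of the sphere spectrum; the statement that arbitrary slices are $H\Z$-modules requires the identification $s_0\mS_S\simeq H\Z$ (Levine, Voevodsky; in positive characteristic only known after inverting the exponential characteristic) together with Pelaez's module structure \cite{Pelaez}, so over a general perfect field it is not available integrally. More importantly, knowing that $s_q\sE$ is an $H\Z$-module does not by itself bound its cohomological amplitude: you still have to feed the hypothesis on $\pi_{*,*}\sE$ through, and the natural vehicle for that is the homotopy $t$-structure, not $\DM$.

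Second, (II) and the ``lower bound'' half of (III) are really the \emph{same} statement: $(f_m\sE)^{a,n}(Y)=0$ for all $m$ sufficiently large (depending on $a$, $n$, $\dim Y$); the vanishing of $(s_m\sE)^{a,n}(Y)$ then follows from the defining triangle. This is precisely what the argument behind \cite[lemma~2.1]{LevineExact} establishes, using Morel's stable connectivity theorem over perfect fields to show that the truncation functors $f_q$ preserve connectivity in the homotopy $t$-structure, so that the weighted hypothesis $\pi_{a+b,b}\sE=0$ for $a\le f(b)$ with $f$ non-decreasing and $f(b)\to\infty$ forces $f_q\sE$ to become arbitrarily highly connective (in the relevant weights) as $q\to\infty$; finite Nisnevich cohomological dimension of $Y$ then gives the vanishing. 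That single vanishing yields completeness, finiteness of the filtration, and stabilization of the pages all at once. Your outline is sound, but you should collapse (II) and the hard half of (III) into one connectivity argument and drop the $H\Z$-module detour.
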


This yields our first convergence result.  For $\sE\in \SH(S)$, $Y\in \Sm/S$,
$p,q,n\in \Z$, define 
\[
H^{p-q}(Y,\pi_{-q}(\sE)(n-q)):=\Hom_{\SH(S)}(\Sigma_T^\infty Y_+, \Sigma^{p+q,
n}s_{-q}(\sE)).
\]
Here $\Sigma^{a,b}$ is suspension with respect to the sphere $S^{a,b}\cong
S^{a-b}\wedge\G_m^{\wedge b}$.
This notation is justified by the case $S=\Spec k$, $k$ a field of
characteristic zero. In this case, there is for each $q$ a canonically defined
object $\pi^\mu_q(\sE)$ of Voevodsky's ``big'' triangulated category of motives
$\DM(k)$, and a canonical isomorphism
\[
\EM(\pi^\mu_q(\sE))\cong \Sigma_T^qs_q(\sE),
\]
where $\EM:\DM(k)\to \SH(k)$ is the motivic Eilenberg-MacLane functor. The
adjoint property of $\EM$ yields the isomorphism
\begin{multline*}
H^{p-q}(Y,\pi^\mu_{-q}(\sE)(n-q)):=\Hom_{\DM(k)}(M(Y),
\pi^\mu_{-q}(\sE)(n-q)[p-q])\\\cong
\Hom_{\SH(S)}(\Sigma_T^\infty Y_+, \Sigma^{p+q, n}s_{-q}(\sE)).
\end{multline*}
We refer the reader to \cite{Pelaez, RO, VoevS0} for details.

\begin{proposition}\label{prop:Conv1} Let $\sR$ be a commutative monoid in $\Mot(S)$, cofibrant as an object in
$\Mot(S)$, with $\sR$ in $\SH^\eff(S)$.  Let $X:=\{\bar{x_i}\in
\sR^{-2d_i,-d_i}(S)\}$ be a countable set of elements of $\sR$-cohomology,  with
$d_i>0$. Let $P$ be a multiplicatively closed subset of $\Z$ and let  $\sM$ be
an $\sR[P^{-1}]$-module, with $U\sM\in \SH(S)^\eff[P^{-1}]$. Suppose that the
canonical map 
\[
U(\sM/(\{x_i\}))\to s_0U\sM
\]
is an isomorphism in $\SH(S)[P^{-1}]$. Then\\
1. The slice spectral sequence for $\sM^{**}(Y)$  has the following form:
\[
E^{p,q}_2(n):=H^{p-q}(Y,\pi^\mu_0(\sM)(n-q))\otimes_\Z\Z[X]_{-q}\Longrightarrow
\sM^{p+q,n}(Y).
\]
2.  Suppose that $S=\Spec k$, $k$ a perfect
field. Suppose further that there is an integer
$a$ such that $\sM^{2r+s, r}(Y)=0$ for all $Y\in \Sm/S$, all $r\in \Z$ and all
$s\ge a$. Then the slice spectral sequence converges strongly
for all $Y\in \Sm/S$, $n\in\Z$.
\end{proposition}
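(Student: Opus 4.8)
The plan is to obtain both parts from results already established. Part 1 is the general slice spectral sequence \eqref{eqn:SliceTower0} with its $E_2$-term rewritten by means of the slice computation of Theorem~\ref{thm:slices of effective spectra} (in its $P$-localized form, Remark~\ref{rem:Loc}); part 2 deduces strong convergence from the vanishing criterion of Lemma~\ref{lem:SliceConv}. For part 1, I would first observe that the hypotheses --- $\sR$ a cofibrant commutative monoid in $\Mot(S)$ with $\sR\in\SH^\eff(S)$, $\sM$ an $\sR[P^{-1}]$-module with $U\sM$ effective, and $U(\sM/(\{x_i\}))\to s_0U\sM$ an isomorphism --- are precisely those needed to apply Theorem~\ref{thm:slices of effective spectra} over $\Mot[P^{-1}]$. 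This gives, for $q\le 0$, a canonical isomorphism $s_{-q}(U\sM)\cong\Sigma^{-q}_T s_0(U\sM)\otimes\Z[X]_{-q}$ in $\SH(S)[P^{-1}]$, while for $q>0$ both $s_{-q}(U\sM)$ and $\Z[X]_{-q}$ vanish, the former because $U\sM$ is effective. Substituting this into the $E_2$-term
\[
E_2^{p,q}(n)=(s_{-q}(U\sM))^{p+q,n}(Y)=\Hom_{\SH(S)}(\Sigma_T^\infty Y_+,\Sigma^{p+q,n}s_{-q}(U\sM))
\]
and using $\Sigma^{p+q,n}\Sigma^{-q}_T\cong\Sigma^{p-q,\,n-q}$ (as $T\simeq S^{2,1}$) together with the fact that $\Hom(\Sigma_T^\infty Y_+,-)$ commutes with the direct sum over a monomial basis of $\Z[X]_{-q}$ (the source being compact) yields
\[
E_2^{p,q}(n)\cong\Hom_{\SH(S)}(\Sigma_T^\infty Y_+,\Sigma^{p-q,\,n-q}s_0(U\sM))\otimes_\Z\Z[X]_{-q}=H^{p-q}(Y,\pi_0(\sM)(n-q))\otimes_\Z\Z[X]_{-q},
\]
the last equality being the definition recalled before the proposition; over a field of characteristic zero one rewrites $\pi_0(\sM)$ as $\pi^\mu_0(\sM)$ via the adjunction property of $\EM$. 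The abutment is unchanged, since $\sM^{p+q,n}(Y)=(U\sM)^{p+q,n}(Y)$.

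For part 2, I would apply Lemma~\ref{lem:SliceConv} to $\sE=U\sM$, viewed as a ($P$-local) object of $\SH(k)$; by Remark~\ref{rem:Loc} its slice tower is the $P$-localization of the one in $\SH(S)$, so this is legitimate and \eqref{eqn:SliceTower0} for $U\sM$ is the spectral sequence of part~1. It then remains to produce the function $f$ demanded by that lemma from the vanishing hypothesis. Taking $Y=\Spec k$ and converting cohomology into stable homotopy groups gives $\sM^{2r+s,r}(k)\cong\pi_{-2r-s,\,-r}(U\sM)$, so ``$\sM^{2r+s,r}(Y)=0$ for all $r$ and all $s\ge a$'' becomes ``$\pi_{u,v}(U\sM)=0$ whenever $u\le 2v-a$''. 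Writing $u=\alpha+b$, $v=b$, this reads $\pi_{\alpha+b,\,b}(U\sM)=0$ for $\alpha\le b-a$, so $f(b):=b-a$ is non-decreasing with $f(b)\to\infty$; Lemma~\ref{lem:SliceConv} (applicable as $k$ is perfect) then yields strong convergence of \eqref{eqn:SliceTower0} for all $Y\in\Sm/S$ and all $n$.

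The argument is essentially bookkeeping on top of Theorem~\ref{thm:slices of effective spectra} and Lemma~\ref{lem:SliceConv}. The two points that genuinely need care --- and where I expect the only, modest, obstacle to lie --- are the exact matching of bigraded indices, both in identifying the $E_2$-term with $H^{p-q}(Y,\pi_0(\sM)(n-q))$ after the $\Sigma_T$-shift and in putting the vanishing hypothesis into the $\pi_{\alpha+b,b}$-form required by Lemma~\ref{lem:SliceConv}, together with the routine verification, supplied by Remark~\ref{rem:Loc}, that passing to $P$-local coefficients changes neither the slices nor the spectral sequence.
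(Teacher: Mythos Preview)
Your proposal is correct and follows essentially the same approach as the paper: part~1 is obtained by plugging the slice computation of Theorem~\ref{thm:slices of effective spectra} (in its $P$-localized form via Remark~\ref{rem:Loc}) into the slice spectral sequence, and part~2 is Lemma~\ref{lem:SliceConv} applied with the function $f(b)=b-a$. The paper's own proof says exactly this in two sentences; you have simply supplied the index bookkeeping that the paper leaves implicit, and done so correctly.
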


\begin{proof} The form of the slice spectral sequence follows directly from
theorem~\ref{thm:slices of effective spectra}, extended via remark~\ref{rem:Loc}
to the $P$-localized situation. The convergence statement follows directly from
lemma~\ref{lem:SliceConv}, where one uses the function $f(r)=r-a$. 
\end{proof}

We may extend the slice spectral sequence to the localizations $\sM[Z^{-1}]$ as
in corollary~\ref{cor:slices of localized spectra}

\begin{proposition}\label{prop:Conv2} Let $\sR$, $X$, $P$ and $\sM$ be as in proposition~\ref{prop:Conv2} and
that assume  all the hypotheses for (1) in that proposition hold. Let $Z=\{z_j\in \Z[X]_{e_j}\}$ be a collection of homogeneous
 elements of $\Z[X]$,   and let $\sM[Z^{-1}]\in \sC$ be the localization of $\sM$ with
respect to the collection of maps $\times z_j:\sM\to \Sigma^{-e_j}_T\sM$. Then
the slice spectral sequence for $\sM[Z^{-1}]^{**}(Y)$  has the following form:
\[
E^{p,q}_2(n):=H^{p-q}(Y,\pi^\mu_0(\sM)(n-q))\otimes_\Z\Z[X][Z^{-1}]_{-q}
\Longrightarrow \sM[Z^{-1}]^{p+q,n}(Y).
\]
Suppose further that  $S=\Spec k$, $k$ a perfect field, and  there is an integer
$a$ such that $\sM^{2r+s, r}(Y)=0$ for all $Y\in \Sm/S$ all $r\in \Z$ and all
$s\ge a$. Then the slice spectral sequence converges strongly for all
$Y\in \Sm/S$, $n\in\Z$.
\end{proposition}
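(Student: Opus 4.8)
The plan is to run the argument of Proposition~\ref{prop:Conv1} essentially verbatim, with Corollary~\ref{cor:slices of localized spectra} (in the $P$-localized form of Remark~\ref{rem:Loc}) taking over the role played there by Theorem~\ref{thm:slices of effective spectra}. For the shape of the spectral sequence one starts from the slice spectral sequence \eqref{eqn:SliceTower0} for $\sE:=U\sM[Z^{-1}]$, whose $E_2$-term is by definition
\[
E_2^{p,q}(n)=(s_{-q}\sE)^{p+q,n}(Y)=\Hom_{\SH(S)}(\Sigma_T^\infty Y_+,\Sigma^{p+q,n}s_{-q}U\sM[Z^{-1}]).
\]
Corollary~\ref{cor:slices of localized spectra} identifies $s_{-q}U\sM[Z^{-1}]$ with $\Sigma^{-q}_Ts_0U\sM\otimes\Z[X][Z^{-1}]_{-q}$, and since $\Sigma^{p+q,n}\Sigma^{-q}_T=\Sigma^{p-q,n-q}$, this rewrites $E_2^{p,q}(n)$, once the coefficient group has been pulled out of the $\Hom$, as $H^{p-q}(Y,\pi^\mu_0(\sM)(n-q))\otimes_\Z\Z[X][Z^{-1}]_{-q}$, exactly as in Proposition~\ref{prop:Conv1}(1). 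The one point needing attention beyond Proposition~\ref{prop:Conv1} is that $\Z[X][Z^{-1}]_{-q}$ need no longer be free; but it is the filtered colimit of the free abelian groups $\Z[X]_{-q+|N|}$ along the injective maps $\times z^N$. Since the functor $A\mapsto SA$ and the smash product both commute with filtered homotopy colimits, and $\Sigma_T^\infty Y_+$ is a compact object of $\SH(S)$, the functor $\Hom_{\SH(S)}(\Sigma_T^\infty Y_+,\,s_0U\sM\otimes(-))$ carries this colimit to the corresponding colimit of abelian groups; this produces the factor $\otimes_\Z\Z[X][Z^{-1}]_{-q}$ with no $\mathrm{Tor}$-terms.

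For the convergence statement I would apply Lemma~\ref{lem:SliceConv} to $\sE=U\sM[Z^{-1}]$ with the non-decreasing, unbounded function $f(r)=r-a$. The only hypothesis to check is the vanishing $\pi_{i+j,j}(U\sM[Z^{-1}])=0$ for $i\le f(j)=j-a$. By construction $\sM[Z^{-1}]$ is the homotopy colimit, over the monomials $z^N$ in $Z$, of the system of shifts $\Sigma^{-|N|}_T\sM$ with transition maps the $\times z_j$; so by compactness of $\Sigma_T^\infty Y_+$ one gets, for every $Y\in\Sm/S$,
\[
\sM[Z^{-1}]^{2r+s,r}(Y)=\colim_N\sM^{2r+s-2|N|,\,r-|N|}(Y)=\colim_N\sM^{2(r-|N|)+s,\,r-|N|}(Y),
\]
and every term on the right vanishes as soon as $s\ge a$, by the hypothesis on $\sM$. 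Hence $\sM[Z^{-1}]$ again satisfies $\sM[Z^{-1}]^{2r+s,r}(Y)=0$ for all $Y\in\Sm/S$, all $r\in\Z$ and all $s\ge a$. Translating this into the indexing of Lemma~\ref{lem:SliceConv} — $\pi_{i+j,j}(U\sM[Z^{-1}])$ corresponds to $U\sM[Z^{-1}]^{-(i+j),-j}(-)$, which lies in the forbidden half-plane precisely when $j-i\ge a$, i.e.\ when $i\le j-a=f(j)$ — gives exactly its hypothesis, and strong convergence for all $Y\in\Sm/S$ and all $n\in\Z$ follows.

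I expect the only genuinely substantive step to be the vanishing-inheritance used in the second paragraph: one has to observe that the direction in which the localization homotopy colimit is formed, namely the $T$-shift $(p,q)\mapsto(p-2e_j,q-e_j)$, runs parallel to the half-plane $\{(2r+s,r):s\ge a\}$ on which $\sM^{**}$ vanishes, so that passing to the colimit does not erode the vanishing region and the criterion of Lemma~\ref{lem:SliceConv} is preserved intact. Everything else is the same bookkeeping as in Proposition~\ref{prop:Conv1}, together with the filtered-colimit/compactness remark above which replaces the use of freeness of $\Z[X]_{-q}$ by flatness of $\Z[X][Z^{-1}]_{-q}$.
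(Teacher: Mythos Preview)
Your proposal is correct and follows exactly the approach the paper takes: the paper's proof is the single sentence ``The proof is same as for proposition~\ref{prop:Conv1}, using corollary~\ref{cor:slices of localized spectra} to compute the slices of $\sM[Z^{-1}]$.'' You have simply unpacked this, supplying the details the paper omits---in particular the filtered-colimit argument handling the non-free coefficient group $\Z[X][Z^{-1}]_{-q}$, and the observation that the vanishing half-plane $\{s\ge a\}$ is preserved under the $T$-shifts defining the localization, so that Lemma~\ref{lem:SliceConv} applies to $\sM[Z^{-1}]$ with the same function $f(r)=r-a$.
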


The proof is same as for proposition~\ref{prop:Conv1}, using
corollary~\ref{cor:slices of localized spectra} to compute the slices of
$\sM[Z^{-1}]$.

\begin{remark}\label{rem:Converge} Let  $\sR$ be a commutative monoid in $\Mot$,
with $\sR\in \SH^\eff(S)$. Suppose that there are elements $a_i\in \sR^{2f_i,
f_i}(S)$, $i=1, 2, \ldots$, $f_i\le0$, so that $\sM$ is the quotient module
$\sR/(\{a_i\})$. Suppose in addition that there is a constant $c$ such that
$\sR^{2r+s, r}(Y)=0$ for all $Y\in \Sm/S$, $r\in \Z$, $s\ge c$. Then 
$\sM^{2r+s, r}(Y)=0$ for all $Y\in \Sm/S$, $r\in \Z$, $s\ge c$. Indeed
\[
\sM:=\hocolim_n\sR/(a_1, a_2,\ldots, a_n)
\]
so it suffices to handle the case $\sM=\sR/(a_1, a_2,\ldots, a_n)$, for which we
may use induction in $n$. Assuming the result for $\sN:=\sR/(a_1, a_2,\ldots,
a_{n-1})$, we have the long exact sequence ($f=f_n$)
\[
\ldots\to \sN^{p+2f,q+f}(Y)\xrightarrow{\times a_n}\sN^{p,q}(Y)\to
\sM^{p,q}(Y)\to \sN^{p+2f+1,q+f}(Y)\to\ldots
\]
Thus the assumption for $\sN$ implies the result for $\sM$ and the induction
goes through. 
\end{remark}

\section{Slices of  quotients of $\MGL$}\label{sec:SlicesTruncatedBPSpectra}
The  slices of a Landweber exact spectrum have
been described by Spitzweck in \cite{Spitzweck10}, but a quotient of $\MGL$ or a localization of such is often not Landweber exact.  We will apply the results of the previous section to describe the slices of the 
motivic truncated Brown-Peterson spectra   $\BP\langle n\rangle$, effective motivic Morava $K$-theory $k(n)$ and 
motivic Morava $K$-theory $K(n)$, as well as recovering the known computations for the Landweber examples \cite{Spitzweck12}, such as the  Brown-Peterson spectra $\BP$ and the Johnson-Wilson spectra $E(n)$. 

Let $\MGL_p$ be the commutative monoid in
$\Mot\otimes\Z_{(p)}$ representing $p$-local algebraic cobordism, as constructed
in \cite[\S 2.1]{PPR}\footnote{This gives $\MGL$ as a symmetric spectrum, we
take the image in the $p$-localized model structure to define $\MGL_p$}.  As
noted in {\it loc.\,cit.}, $\MGL_p$ is a cofibrant object of
$\Mot\otimes\Z_{(p)}$. The motivic $\BP$ was first constructed 
by Vezzosi in \cite{Vezzosi} as a direct summand of $\MGL_p$
by using Quillen's idempotent theorem. Here we construct $\BP$
and $\BP\langle n\rangle$ as quotients of 
$\MGL_p$; the effective Morava $K$-theory is similarly a quotient of $\MGL_p/p$. We consider as well the . Our explicit description of the slices  
allows us to describe the $E_2$-terms of 
slice spectral sequences for $\BP$ and $\BP\langle n\rangle$.

The bigraded coefficient ring $\pi_{*,*}\MGL_p(S)$  
contains $\pi_{2*}MU\simeq \L_*$, localized at $p$, as a graded subring of the bi-degree  $(2*,*)$ part, via the classifying map for the formal group law of $\MGL$, split by the appropriate Betti or \'etale realization map. The
ring $\mathbb{L}_{*p}:=\L_*\otimes_\Z\Z_{(p)}$ is isomorphic to polynomial ring 
$\Z_{(p)}[x_1, x_2, \cdots]$ \cite[Part II, theorem 7.1]{Adams}, where the element $x_i$ 
has degree $2i$ in $\pi_{*}MU$,  degree $(2i, i)$ in $\pi_{*,*}MGL_p$ and degree $i$ in $\L_*$. 

The following result of Hopkins-Morel-Hoyois \cite{Hoyois} is crucial for the application of the general results of the previous sections to quotients of $\MGL$ and $\MGL_p$.

\begin{theorem}[\hbox{\cite[theorem 7.12]{Hoyois}}]\label{thm:HMH} Let $p$ be a prime integer, $S$ an essentially smooth scheme over a field of characteristic prime to $p$. Then the canonical maps $\MGL_p/(\{x_i:i=1, 2,\ldots\})\to s_0\MGL_p\to H\Z_{(p)}$ are isomorphisms in $\SH(S)$.  In case $S=\Spec k$, $k$ a perfect field of characteristic prime to $p$, the inclusion $\L_{*p}\subset\pi_{2*,*}\MGL_p(S)$  is an equality.
\end{theorem}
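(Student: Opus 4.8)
I would reprove this along the lines of Hoyois \cite{Hoyois}, running the Hopkins--Morel strategy motivically. First reduce to the case $S=\Spec k$ with $k$ a perfect field of characteristic prime to $p$: $\MGL_p$, the motivic cohomology spectrum $H\Z_{(p)}$, the slice functors, and the construction $\MGL_p/(\{x_i\})$ all commute with essentially smooth base change, and a continuity (filtered colimit) argument on the base descends the general statement to one over a perfect field one can treat directly. The ring map $\MGL_p\to H\Z_{(p)}$ is the canonical one coming from the orientation of motivic cohomology (Chern classes); its formal group law is additive, so the classifying map $\L_*\to\pi_{2*,*}H\Z_{(p)}$ kills every $x_i$ and the ring map factors as $\theta\colon\MGL_p/(\{x_i\})\to H\Z_{(p)}$. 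Since each $x_i$ has strictly positive slice weight ($d_i>0$), proposition~\ref{prop:zeroth slice} identifies the homotopy fibre of $\MGL_p\to\MGL_p/(\{x_i\})$ with $\hocolim_{\sI^\circ}\sD_x\otimes\MGL_p$, which lies in $\Sigma^1_T\SH^\eff(S)$; hence $s_0\MGL_p\to s_0(\MGL_p/(\{x_i\}))$ is an isomorphism. As $H\Z_{(p)}$ lies in the heart of the slice filtration, once $\theta$ is shown to be an equivalence it follows that $\MGL_p/(\{x_i\})$ is in the heart as well, and applying $s_0$ shows all three of $\MGL_p/(\{x_i\})\to s_0\MGL_p\to H\Z_{(p)}$ are isomorphisms. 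So the whole first assertion reduces to showing $\theta$ is an equivalence.

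\textbf{$\theta$ is a homology isomorphism; the rational case.} I would smash $\theta$ with $H\Z_{(p)}$. The Thom isomorphism for $\MGL$ gives $H\Z_{(p)}\wedge\MGL_p\simeq H\Z_{(p)}[b_1,b_2,\ldots]$, a polynomial algebra on classes $b_i$ of bidegree $(2i,i)$, and the motivic analogue of Quillen's Hurewicz computation shows the image of $x_i$ here is $b_i$ modulo decomposables. Smashing the defining cofibre sequences of $\MGL_p/(\{x_i\})$ with $H\Z_{(p)}$ and passing to the (filtered) homotopy colimit then gives $H\Z_{(p)}\wedge\MGL_p/(\{x_i\})\simeq H\Z_{(p)}$, i.e.\ $\theta$ is an $H\Z_{(p)}$-homology isomorphism. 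The rational statement is immediate: $\MGL_\Q$ is a sum of Tate twists and shifts of $H\Q$ (Landweber exactness / the Chern character) and $\L_*\otimes\Q=\Q[x_1,x_2,\ldots]$, so $\theta_\Q$ is an equivalence; since $\MGL_p$ is $p$-local, by arithmetic fracture it remains only to show $\theta$ is an equivalence after $p$-completion.

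\textbf{Where the difficulty lies.} The $p$-complete step is the heart of the matter, and I expect it to be the main obstacle: one must pass from the ($H\Z_{(p)}$-, equivalently $H\F_p$-) homology isomorphism to an honest equivalence of the effective, Morel-connective spectra $\MGL_p/(\{x_i\})$ and $H\Z_{(p)}$, i.e.\ one needs the relevant mod-$p$ motivic Adams/Postnikov tower to converge. This rests on Morel's stable connectivity theorem together with the computation of the motivic Steenrod algebra and of $\pi_{2*,*}H\Z$ over $k$; in characteristic $0$ this is precisely Hopkins--Morel, while in characteristic $p>0$ it forces the use of Gabber's refined alterations in place of resolution of singularities, and that ingredient is exactly what makes the hypothesis ``characteristic prime to $p$'' (equivalently, inverting the characteristic) necessary. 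All remaining steps are formal manipulations of cofibre sequences and homotopy colimits as in \S\ref{general quotients}.

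\textbf{The coefficient ring.} Finally, for $k$ perfect of characteristic prime to $p$, the equality $\L_{*p}=\pi_{2*,*}\MGL_p(k)$ is extracted from the slice computation $\theta$ now makes available. The isomorphism $\theta$ verifies the hypothesis of theorem~\ref{thm:slices of effective spectra} for $\sR=\sM=\MGL_p$ (working over $\Mot\otimes\Z_{(p)}$ as in remark~\ref{rem:Loc}), so $s_n\MGL_p\cong\Sigma^n_T H\Z_{(p)}\otimes\Z_{(p)}[X]_n$ with $\Z_{(p)}[X]_n$ the weight-$n$ summand of $\L_{*p}$. Computing $\pi_{2m,m}$ of each slice, one finds $\pi_{2m,m}(\Sigma^n_T H\Z_{(p)}\otimes\Z_{(p)}[X]_n)=H^{2n-2m,n-m}(k,\Z_{(p)})\otimes\Z_{(p)}[X]_n$, which vanishes unless $n=m$ and then equals $\Z_{(p)}[X]_m$; thus only the slice $s_m\MGL_p$ contributes to $\pi_{2m,m}$, so the slice spectral sequence of proposition~\ref{prop:Conv1} has a single nonzero term in that total degree and degenerates there, giving $\pi_{2m,m}\MGL_p(k)=\Z_{(p)}[X]_m$. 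Summing over $m$ yields $\pi_{2*,*}\MGL_p(k)=\L_{*p}$, and the inclusion in the statement is visibly this identification.
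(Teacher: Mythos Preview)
The paper does not prove this theorem: it is quoted verbatim from Hoyois \cite[theorem~7.12]{Hoyois} as an external input (``The following result of Hopkins--Morel--Hoyois \cite{Hoyois} is crucial\ldots''), so there is no proof in the paper to compare your proposal against.

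That said, your sketch is a faithful outline of the argument in \cite{Hoyois}: reduction by essentially smooth base change to a perfect field, factoring the orientation map $\MGL_p\to H\Z_{(p)}$ through the quotient, the Thom/Hurewicz computation showing $\theta$ is an $H\Z_{(p)}$-homology equivalence, the easy rational case, and correctly isolating the $p$-complete step (convergence of the motivic Adams spectral sequence, the motivic Steenrod algebra, and Gabber's alterations in positive characteristic) as the substantive difficulty. Your extraction of $\pi_{2*,*}\MGL_p(k)=\L_{*p}$ from the resulting slice spectral sequence is likewise how Hoyois obtains it. One small caution on internal references: you invoke proposition~\ref{prop:Conv1} for convergence, but that proposition's hypothesis (the vanishing $\MGL_p^{2r+s,r}(Y)=0$ for $s\gg0$) is itself supplied in this paper by citing \cite[theorem~8.12]{Hoyois}, so you are not avoiding the reference to \cite{Hoyois} there either. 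In short, your proposal is a summary of the cited proof rather than an alternative to anything the present paper does.
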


We define a series of subsets of the set of generators $\{x_i\ |\ i=1,
2\ldots\}$, 
\begin{align*}
&B^c_p=\{x_i:i \neq p^k-1, k\ge1 \},\\
&B_p=\{x_i:i = p^k-1, k\ge1 \},\\
&B\langle n \rangle^c_p= \{x_i:i \neq p^k-1, 1\le k\leq n \}, \\
&B\langle n \rangle_p= \{x_i:i = p^k-1, 1\le k\leq n \},\\
&k\langle n \rangle_p= \{x_{p^n-1}\}.
\end{align*}
We also define 
\[
k\langle n \rangle^c_p= \{x_i : i \neq p^n-1, \mathrm{and}\ x_0=p\}\subset \{p, x_i\ |\ i=1,
2\ldots\}.
\]

\begin{definition}[$BP,\ BP\langle n\rangle$ and $E(n)$]
\label{BP, BP<n> and E(n)}
The  Brown-Peterson spectrum $\BP$ is defined as
\[
\BP:=\MGL_p/(\{x_i\ |\ i\in B^c_p\}),
\]
the truncated Brown-Peterson spectrum $\BP\langle n\rangle$
is  defined as 
\[
\BP\langle n\rangle:=\MGL_p/(\{x_i\ |\ i\in B\langle n \rangle^c_p\})
\]
and the Johnson-Wilson spectrum $E(n)$ is the localization
\[
E(n):=\BP\langle n\rangle[x_{p^n-1}^{-1}].
\]
\end{definition}

\begin{definition}[Morava $K$-theories $k(n)$ and $K(n)$]
\label{k(n) and K(n)}
{\em Effective Morava $K$-theory} $k(n)$ defined as
\[
k(n):=\MGL_p/(\{x_i\ |\ i\in k\langle n \rangle^c_p\})\cong \BP\langle n\rangle/(x_{p-1}, \ldots, x_{p^{n-1}-1},p).
\]
Define Morava $K$-theory $K(n)$ as the localization
\[
K(n):= k(n)[x_{p^n-1}^{-1}]
\]
\end{definition}

The spectra $\BP, \BP\langle n\rangle,\mm E(n), k(n)$ 
and $K(n)$ are $\MGL_p$-modules.  $\BP$ and $E(n)$ are Landweber exact. We let
$\sC$ denote the category of $\MGL_p$-modules.

\begin{lemma}
\label{lem:effectivity of various BP-related quotients}
The $MGL_p$-module spectra $\BP, \BP\langle n\rangle$ and  $k(n)$ 
 are effective.  $\BP$ and $E(n)$ have the structure of  oriented weak
commutative ring $T$-spectra in $\SH(S)$.
\end{lemma}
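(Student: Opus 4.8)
The plan is to treat the two assertions separately; the first is essentially a corollary of lemma~\ref{effectivity of quotients of effective spectra}. The only background fact needed is that $\MGL$, and hence its $p$-localization $\MGL_p$, is effective; this is standard, since $\MGL$ is a homotopy colimit of $T$-suspension spectra of Thom spaces of tautological bundles on finite Grassmannians, all of which are effective, and $p$-localization preserves effectivity. Because $\BP=\MGL_p/(\{x_i:i\in B^c_p\})$ and $\BP\langle n\rangle=\MGL_p/(\{x_i:i\in B\langle n\rangle^c_p\})$ are quotients of $\MGL_p$ by countable sets of $\MGL_p$-cohomology classes of strictly positive degree, lemma~\ref{effectivity of quotients of effective spectra} applies verbatim and yields effectivity of both. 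For $k(n)$ there is one extra wrinkle, because the generator $x_0=p$ has degree $0$ and is therefore not covered by that lemma; using remark~\ref{rem:DoubleQuot} I would instead write
\[
k(n)\cong\bigl(\BP\langle n\rangle/(x_{p-1},\ldots,x_{p^{n-1}-1})\bigr)/p,
\]
note that the inner quotient is effective by lemma~\ref{effectivity of quotients of effective spectra} (all degrees are now positive), and observe that $(-)/p=\hocofib(\times p)$ carries effective objects to effective objects since $\Ho\sC^\eff$ is a localizing, hence triangulated, subcategory of $\Ho\sC$. That settles the first sentence.

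For the second sentence the guiding principle is to use no ring structure on a quotient of $\MGL_p$ --- none is known, as the introduction stresses --- and to obtain the multiplicative structure on $\BP$ and $E(n)$ from Landweber exactness instead. The graded rings $\BP_*=\Z_{(p)}[v_1,v_2,\ldots]$ and $E(n)_*=\Z_{(p)}[v_1,\ldots,v_n][v_n^{-1}]$ are Landweber exact, graded-commutative $\L_{*p}$-algebras, so by the motivic Landweber exact functor theorem of Naumann--Spitzweck--\O stv\ae r the associated homology theories are represented by commutative monoids $\BP^{\mathrm{LE}}, E(n)^{\mathrm{LE}}$ in $\SH(S)$, each of which is an $\MGL$-algebra and so carries a canonical orientation. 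It then remains to identify $\BP$ with $\BP^{\mathrm{LE}}$ and $E(n)$ with $E(n)^{\mathrm{LE}}$. For this, choosing the polynomial generators $x_i$ compatibly with the $p$-typical splitting (i.e.\ with Quillen's idempotent), the canonical $\MGL_p$-algebra map $\MGL_p\to\BP^{\mathrm{LE}}$ annihilates $x_i$ for every $i\in B^c_p$, hence factors canonically through the quotient $\BP$ via the universal property of the homotopy-colimit presentation in proposition~\ref{prop:zeroth slice}; this factorization is an isomorphism on bigraded homotopy groups --- on one side a quotient of $\pi_{*,*}\MGL_p$ by a regular sequence, using theorem~\ref{thm:HMH} to know $\pi_{2*,*}\MGL_p=\L_{*p}$ over a perfect field, and on the other side the Landweber exact base change $\BP_*\otimes_{\L_{*p}}\pi_{*,*}\MGL_p$ --- and since both spectra are cellular $\MGL_p$-modules a $\pi_{*,*}$-isomorphism is a weak equivalence. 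Transporting the commutative-monoid and orientation structures along this equivalence proves the claim for $\BP$; alternatively, for $\BP$ one may simply invoke Vezzosi's construction \cite{Vezzosi} of it as a multiplicative direct summand of $\MGL_p$. For $E(n)=\BP\langle n\rangle[x_{p^n-1}^{-1}]$ the same scheme applies, now using that $x_{p^n-1}$ maps to an invertible element of $E(n)_*$ so that $\MGL_p\to E(n)^{\mathrm{LE}}$ factors through the localization by the universal property of localization, followed by the analogous comparison of $\pi_{*,*}$.

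The step I expect to be the main obstacle is precisely this identification: making sure the canonical $\MGL_p$-algebra maps kill, respectively invert, exactly the intended generators --- which forces one either to choose the generating system $x_1,x_2,\ldots$ of $\L_{*p}$ adapted to the $p$-typical structure or to argue through Quillen's idempotent --- and then checking that the induced map on the quotient, respectively the localization, really is a $\pi_{*,*}$-isomorphism. If, as in the discussion just preceding the statement, ``$\BP$ and $E(n)$ are Landweber exact'' is already understood to include their identification with the representing spectra $\BP^{\mathrm{LE}}, E(n)^{\mathrm{LE}}$, then this obstacle is absorbed and the second sentence reduces to a direct appeal to the motivic Landweber exact functor theorem together with the fact that $\MGL$-algebras are oriented.
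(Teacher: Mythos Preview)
Your proof is correct and follows the same approach as the paper: effectivity via lemma~\ref{effectivity of quotients of effective spectra} (plus closure of effective objects under cofibers to handle the degree-zero generator $p$ in $k(n)$, which the paper absorbs into the phrase ``homotopy colimits of effective spectra are effective''), and the ring structure for $\BP$ and $E(n)$ via Landweber exactness and \cite{NSO}. Your detailed identification of the quotient $\BP$ with the Landweber-exact representing spectrum goes well beyond what the paper provides --- it simply asserts ``$\BP$ and $E(n)$ are Landweber exact'' just before the lemma and cites \cite{NSO} --- so your final paragraph correctly anticipates that this identification is being taken as given.
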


\begin{proof}
The effectivity of these theories follows from lemma
\ref{effectivity of quotients of effective spectra} and the fact that homotopy
colimits of effective spectra are effective. The ring structure for $\BP$ and
$E(n)$ follows from the Landweber exactness (see \cite{NSO}).
\end{proof}

We first discuss the effective theories $\BP, \BP\langle n\rangle$ and  $k(n)$.

\begin{proposition}
\label{prop:zeroth slices of BP and BP<n>} Let $p$ be a prime, $k$  a  field with
exponential characteristic prime to $p$ and $S$ an essentially smooth $k$-scheme.  Then in $\SH(S)$:\\
1. The zeroth slices of both
$\BP$ and $\BP\langle n\rangle$ are isomorphic to $p$-local  motivic
Eilenberg-MacLane spectrum $H\Z_{(p)}$, and the zeroth slice 
of $k(n)$ is isomorphic to $H\Z/p$.\\
2. The quotient maps from $\MGL_p$ induce isomorphisms
\[
s_0\BP\simeq  (s_0\MGL)_p\simeq s_0\BP\langle n\rangle
\]
and 
\[
s_0k(n)\simeq   (s_0\MGL)_p/p.
\]
3. The respective quotient maps from $\BP$, $\BP\langle n\rangle$ and $k(n)$
induce isomorphisms
\begin{align*}
&\BP/(\{x_i : x_i\in B_p\})  \simeq s_0\BP\\
&\BP\langle n\rangle/(\{x_i: x_i\in B\langle n\rangle_p\}) \simeq s_0\BP\langle
n\rangle\\
&k(n)/(x_{p^n-1})\simeq s_0k(n)
\end{align*}
\end{proposition}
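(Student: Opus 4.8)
The plan is to deduce all three parts of proposition~\ref{prop:zeroth slices of BP and BP<n>} from the Hopkins--Morel--Hoyois theorem~\ref{thm:HMH} together with the double-quotient formalism of remark~\ref{rem:DoubleQuot}. The key observation is that the generating set $\{x_i : i = 1, 2, \ldots\}$ decomposes as a disjoint union $B^c_p \amalg B_p$, and similarly $\{x_i\} = B\langle n\rangle^c_p \amalg B\langle n\rangle_p$, and the index set $k\langle n\rangle^c_p$ (sitting inside $\{p, x_i\}$) together with the single element $\{x_{p^n-1}\}$ exhausts $\{p, x_1, x_2, \ldots\}$. So by remark~\ref{rem:DoubleQuot},
\[
\MGL_p/(\{x_i : i = 1, 2, \ldots\}) \cong \BP/(\{x_i : x_i \in B_p\}),
\]
and likewise for the $\BP\langle n\rangle$ and $k(n)$ cases, where for $k(n)$ one uses that $\MGL_p/(p) = \MGL_p/(x_0)$ in the notation with $x_0 := p$.

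First I would prove part (3) for $\BP$: by theorem~\ref{thm:HMH}, the left-hand side $\MGL_p/(\{x_i : i \geq 1\})$ is isomorphic to $s_0\MGL_p = H\Z_{(p)}$; rewriting the left side via remark~\ref{rem:DoubleQuot} as $\BP/(\{x_i : x_i \in B_p\})$ and observing that this quotient is built from $\BP$ by killing elements of strictly positive degree, the canonical map $\BP/(\{x_i : x_i \in B_p\}) \to s_0\BP$ is the map $\pi^\sR_{\BP}$ appearing before theorem~\ref{thm:slices of effective spectra}, which (by the argument establishing $\sigma_\sM$ there, since $\hocolim_{\sI^\circ}\sD_x \otimes Q\BP$ is in $\Sigma^1_T\Ho\sC^\eff$) is an isomorphism onto $s_0\BP$. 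Comparing with the HMH isomorphism identifies $s_0\BP$ with $s_0\MGL_p = H\Z_{(p)}$, giving parts (1), (2) and (3) for $\BP$ simultaneously. The identical argument with $B\langle n\rangle^c_p$ in place of $B^c_p$ handles $\BP\langle n\rangle$. For $k(n)$, one applies the same reasoning to $\MGL_p/p = \MGL_p/(x_0)$: by theorem~\ref{thm:HMH} and the cofiber sequence defining $H\Z/p$ from $H\Z_{(p)}$, $\MGL_p/(\{x_0, x_1, x_2, \ldots\}) \cong s_0(\MGL_p)/p \cong H\Z/p$; rewriting the left side as $k(n)/(x_{p^n-1})$ via remark~\ref{rem:DoubleQuot} and using that this is again a quotient by a positive-degree element gives $s_0 k(n) \cong H\Z/p$ and the isomorphism $k(n)/(x_{p^n-1}) \simeq s_0 k(n)$.

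For the effectivity needed to invoke the $\pi^\sR_\sM$ machinery one cites lemma~\ref{lem:effectivity of various BP-related quotients}; this is why the statement is restricted to the effective theories $\BP$, $\BP\langle n\rangle$, $k(n)$ and not their localizations $E(n)$, $K(n)$. The main obstacle I anticipate is purely bookkeeping: one must check that each rewriting via remark~\ref{rem:DoubleQuot} is compatible with the \emph{canonical} quotient maps — i.e., that the isomorphism $\MGL_p/(\{x_i\}) \cong \BP/(\{x_i : x_i \in B_p\})$ carries the HMH canonical map $\MGL_p/(\{x_i\}) \to s_0\MGL_p$ to the composite of the quotient map $\BP \to \BP/(\{x_i : x_i \in B_p\})$ with the identification of the target with $s_0\BP$, so that the "respective quotient maps ... induce isomorphisms" assertion of part (3) is literally what is being proved and not merely an abstract existence statement. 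Unwinding the definition of the maps $p_I$ and $\rho_{A \subset B}$ in \S\ref{general quotients} shows these canonical maps are all compatible with the symmetry isomorphisms used in remark~\ref{rem:DoubleQuot}, so this compatibility holds, but spelling it out cleanly is the one genuinely fiddly point.
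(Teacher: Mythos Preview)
Your approach is essentially identical to the paper's: both use the Hopkins--Morel--Hoyois isomorphism together with the double-quotient identification (remark~\ref{rem:DoubleQuot}) and the observation that quotienting by positive-degree generators does not affect $s_0$. The paper phrases the general step as: for any $\sS\subset\N$, the quotient map $\MGL_p\to\MGL_p/(\{x_i:i\in\sS^c\})$ induces an isomorphism on $s_0$, and then the double-quotient rewrite of $\MGL_p/(\{x_i:i\in\N\})$ together with HMH gives the chain $(\MGL_p/\sS^c)/\sS\cong s_0[\MGL_p/\sS^c]\cong s_0\MGL_p$.

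One sentence in your write-up needs correction. You assert that $\pi^\sR_{\BP}:\BP/(\{x_i:x_i\in B_p\})\to s_0\BP$ is an isomorphism ``by the argument establishing $\sigma_\sM$, since $\hocolim_{\sI^\circ}\sD_x\otimes Q\BP\in\Sigma^1_T\Ho\sC^\eff$''. That argument only shows $\sigma_\sM:s_0\BP\xrightarrow{\sim}s_0(\BP/(\{x_i\}))$; it does \emph{not} show that $\BP/(\{x_i\})$ coincides with its own $0$-slice, which is what $\pi^\sR_{\BP}$ being an isomorphism means. The missing step is precisely the HMH input you invoke in the next sentence: since $\BP/(\{x_i:x_i\in B_p\})\cong\MGL_p/(\{x_i:i\ge1\})\cong H\Z_{(p)}$ and $H\Z_{(p)}$ is already a $0$-slice, the canonical map $\BP/(\{x_i\})\to s_0(\BP/(\{x_i\}))$ is an isomorphism, and composing with $\sigma_\sM^{-1}$ gives the isomorphism $\pi^\sR_{\BP}$. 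Once you reorder the logic so that HMH is invoked \emph{before} claiming $\pi^\sR_{\BP}$ is an isomorphism, the argument is complete and matches the paper's. Your final paragraph on compatibility of the canonical maps is exactly the bookkeeping the paper leaves implicit.
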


\begin{proof}
By theorem~\ref{thm:HMH},  the classifying map $\MGL\to H\Z$ for
motivic cohomology induces isomorphisms
\[
\MGL_p/(\{x_i:i=1, 2,\ldots\})\cong s_0\MGL_p\cong H\Z_{(p)}
\]
in $\SH(S)\otimes\Z_{(p)}$.

Now let $\sS\subset \N$ be a subset and $\sS^c$ its complement. By
remark~\ref{rem:DoubleQuot}, we have an isomorphism
\[
(\MGL_p/(\{x_i:i\in \sS^c\}))/(\{x_i:i\in \sS\})\cong \MGL_p/(\{x_i:i\in\N\}).
\]
Also, as $x_i$ is a map $\Sigma^{2i,i}\MGL_p\to \MGL_p$,  $i>0$, the quotient
map $\MGL_p\to \MGL_p/(\{x_i:i\in \sS^c\})$ induces an isomorphism
\[
s_0\MGL_p\to s_0[\MGL_p/(\{x_i:i\in \sS^c\})].
\]
This gives us isomorphisms
\[
(\MGL_p/(\{x_i:i\in \sS^c\}))/(\{x_i:i\in \sS\})\cong s_0[\MGL_p/(\{x_i:i\in
\sS^c\})]\cong s_0\MGL_p 
\]
with the first isomorphism induced by the quotient map
\[
\MGL_p/(\{x_i:i\in \sS^c\}\to (\MGL_p/(\{x_i:i\in \sS^c\}))/(\{x_i:i\in \sS\}).
\]
Taking $\sS=B_p, B\langle n\rangle_p, \{x_{p^n-1}\}$ proves the result for
$\BP$, $\BP\langle n\rangle$ and $k(n)$, respectively. 
\end{proof}

For motivic spectra $\sE= \BP,\ \BP\langle n\rangle,\ k(n),\ E(n)
\ \mathrm{and}\ K(n)$ defined in \ref{BP, BP<n> and E(n)} and
\ref{k(n) and K(n)} let us denote the corresponding
topological spectra by $\sE^{top}$. 
The graded coefficient rings $\sE^{top}_*$
of these topological spectra are
\[
\sE^{top}_* \simeq  \left\{\begin{array}{ll}  \Z_p[v_1,\mm 
                  v_2,\cdots]&\ \ \ \sE= \BP\\
                  \Z_p[v_1,\mm v_2, \cdots,v_n]
                   &\ \ \ \sE=\BP\langle n\rangle \\
                  \Z_p[v_1,\mm v_2, \cdots,v_n, \mm v_n^{-1}]
                   &\ \ \ \sE=E(n)\\
                   \Z/p[v_n]
                   &\ \ \ \sE=k(n)\\
                   \Z/p[v_n, \mm v_n^{-1}]
                   &\ \ \ \sE=K(n)                                        
\end{array} \right\}
\]
where deg\mm$v_n=2(p^n-1)$. The element $v_n$ corresponds to the element
$\bar{x}_n\in \MGL^{2n,n}(k)$.

\begin{corollary}
\label{cor:slices of BP and BP<n>}   Let $p$ be a prime, $k$  a  field with
exponential characteristic prime to $p$ and $S$ an essentially smooth $k$-scheme. 
Then in $\SH(S)$, the slices of Brown-Peterson, Johnson-Wilson and Morava theories are given by
\[
s_i\sE \simeq  \left\{\begin{array}{ll} \Sigma^i_T\mm 
                                H_{\Z_p}\otimes \sE^{top}_{2i}   
                                &\ \ \ \sE= \BP,\ \BP\langle n\rangle 
                                \ \mathrm{and}\ E(n)\\
                                 \Sigma^i_T\mm 
                                H_{\Z/p}\otimes \sE^{top}_{2i}   
                                & \ \ \ \sE= 
                                k(n)\ \mathrm{and}\ K(n)                        
               
\end{array} \right\}
\]
where $\sE^{top}_{2i}$ is degree $2i$ homogeneous component
of coefficient ring of the corresponding topological theory.
\end{corollary}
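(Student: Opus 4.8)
The plan is to read off the corollary from theorem~\ref{thm:slices of effective spectra} and corollary~\ref{cor:slices of localized spectra}, applied with $\sR=\MGL_p$ in the $p$-local model category $\Mot\otimes\Z_{(p)}$ (using remark~\ref{rem:Loc}); recall from \S\ref{sec:SlicesTruncatedBPSpectra} that $\MGL_p$ is a cofibrant commutative monoid in $\Mot\otimes\Z_{(p)}$ lying in $\SH^\eff(S)\otimes\Z_{(p)}$. For the effective spectra $\BP$, $\BP\langle n\rangle$ and $k(n)$ I would apply theorem~\ref{thm:slices of effective spectra} directly: for $\BP$ take $X=\{x_{p^k-1}:k\ge1\}$, viewed as in that theorem as a countable set of homogeneous classes in $\MGL_p$-cohomology of positive degrees $d_i=p^k-1$; for $\BP\langle n\rangle$ take $X=\{x_{p^k-1}:1\le k\le n\}$; for $k(n)$ take $X=\{x_{p^n-1}\}$ (the class $x_0=p$ has degree $0$, so it does not enter $X$ --- it is one of the elements quotiented out in forming $k(n)$). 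Effectivity of $\BP$, $\BP\langle n\rangle$ and $k(n)$ as $\MGL_p$-modules is lemma~\ref{lem:effectivity of various BP-related quotients}, and the hypothesis of theorem~\ref{thm:slices of effective spectra} that the canonical map $\pi_\sM: U(\sM/(\{x_i\}))\to s_0U\sM$ be an isomorphism is precisely proposition~\ref{prop:zeroth slices of BP and BP<n>}(3) in the three relevant cases. Theorem~\ref{thm:slices of effective spectra} then gives $s_i\sE\cong\Sigma^i_T s_0\sE\otimes\Z[X]_i$ in $\SH(S)$.

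For $E(n)$ and $K(n)$ I would use corollary~\ref{cor:slices of localized spectra}: $E(n)=\BP\langle n\rangle[x_{p^n-1}^{-1}]$ is the localization of $\BP\langle n\rangle$ at the single homogeneous element $z=x_{p^n-1}\in\Z[X]_{p^n-1}$, and $K(n)=k(n)[x_{p^n-1}^{-1}]$ is the analogous localization of $k(n)$; since the hypotheses of theorem~\ref{thm:slices of effective spectra} have just been checked for $\BP\langle n\rangle$ and $k(n)$, corollary~\ref{cor:slices of localized spectra} gives $s_iE(n)\cong\Sigma^i_T s_0\BP\langle n\rangle\otimes\Z[X][z^{-1}]_i$ and $s_iK(n)\cong\Sigma^i_T s_0k(n)\otimes\Z[X][z^{-1}]_i$.

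It then remains to translate these formulas into topological terms. By proposition~\ref{prop:zeroth slices of BP and BP<n>}(1), $s_0\BP\cong s_0\BP\langle n\rangle\cong H_{\Z_p}$ and $s_0k(n)\cong H_{\Z/p}$, and these identifications persist after localization, since the degree-zero part of $\Z[X][z^{-1}]$ is just $\Z\cdot1$. To match $\Z[X]_i$ and $\Z[X][z^{-1}]_i$ with $\sE^{top}_{2i}$, observe that $x_{p^k-1}$ has weighted degree $p^k-1$ whereas the topological generator $v_k$ has degree $2(p^k-1)$, so halving degrees identifies the degree-$i$ weighted-homogeneous component of $\Z[\{x_{p^k-1}\}]$, resp.\ of $\Z[\{x_{p^k-1}\}][x_{p^n-1}^{-1}]$, with the degree-$2i$ component $\sE^{top}_{2i}$ of $\Z_{(p)}[v_1,v_2,\ldots]$, $\Z_{(p)}[v_1,\ldots,v_n]$, $\Z_{(p)}[v_1,\ldots,v_n,v_n^{-1}]$, $\Z/p[v_n]$ and $\Z/p[v_n,v_n^{-1}]$ respectively. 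Each such graded piece is free of some finite rank $r_i$ over its ground ring, so both $\Sigma^i_T s_0\sE\otimes\Z[X]_i$ and $\Sigma^i_T s_0\sE\otimes\sE^{top}_{2i}$ are a wedge of $r_i$ copies of $\Sigma^i_T H_{\Z_p}$, resp.\ $\Sigma^i_T H_{\Z/p}$; this absorbs the discrepancy between the coefficient rings $\Z$, $\Z_{(p)}$ and $\Z/p$ and gives the stated formulas. The only step that requires genuine care, rather than being a real obstacle, is checking that the isomorphisms produced in proposition~\ref{prop:zeroth slices of BP and BP<n>}(3) coincide with the canonical maps $\pi_\sM$ in the hypothesis of theorem~\ref{thm:slices of effective spectra} --- both are induced by the relevant quotient maps, but this coincidence should be made explicit --- after which the whole argument is bookkeeping over the five cases.
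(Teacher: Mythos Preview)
Your proof is correct and follows essentially the same route as the paper: apply theorem~\ref{thm:slices of effective spectra} (via remark~\ref{rem:Loc}) to the effective spectra, using proposition~\ref{prop:zeroth slices of BP and BP<n>} to verify the hypothesis on the zeroth slice, and then pass to the localizations. The only minor difference is that for $k(n)$ and $K(n)$ the paper invokes corollary~\ref{cor:slices of quotient spectra} (treating the quotient by $p$ last), whereas you apply theorem~\ref{thm:slices of effective spectra} directly to $k(n)$ with $X=\{x_{p^n-1}\}$ and then corollary~\ref{cor:slices of localized spectra} for $K(n)$; since proposition~\ref{prop:zeroth slices of BP and BP<n>}(3) supplies exactly the required isomorphism $k(n)/(x_{p^n-1})\simeq s_0k(n)$, your variant is equally valid and arguably more uniform.
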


\begin{proof}
The statement for $\BP$ and  $\BP\langle n\rangle$  follows 
from theorem~\ref{thm:slices of effective spectra}, and remark~\ref{rem:Loc}.
The case of   $E(n)$ follows from corollary~\ref{cor:slices of localized
spectra} and the cases of $k(n)$ and 
$K(n)$  follow from corollary~\ref{cor:slices of quotient spectra}.
\end{proof}

\begin{theorem}\label{slice spectral sequence for BP and BP<n>}  Let $p$ be a prime, $k$  a  field with
exponential characteristic prime to $p$ and $S$ an essentially smooth $k$-scheme. 
The slice spectral sequence for any of the spectra 
$\sE= \BP,\ \BP\langle n\rangle,\ k(n),\ E(n)
\ \mathrm{and}\ K(n)$ in $\SH(S)$ has the form 
\[
\sE^{p,q}_2(X,m)= H^{p-q}(X, 
\mathcal{Z}(m-q))\otimes_\Z \sE^{top}_{-2q}
\Rightarrow \sE^{p+q, m}(X)
\]
where $\mathcal{Z}= \Z_p$ for $\sE=\BP,\ \BP\langle n\rangle$
and $E(n)$, and $\mathcal{Z}= \Z/p$ for $\sE=k(n)$
and $K(n)$. In case $S=\Spec k$ and $k$ is perfect, these spectral sequences are all strongly convergent.
\end{theorem}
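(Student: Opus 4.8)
The shape of $E_2$ is a formal consequence of the slice computations already available, whereas the convergence statement is an application of lemma~\ref{lem:SliceConv} once the right connectivity estimate is in hand.

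\emph{The $E_2$-page.} For $\sE=\BP,\BP\langle n\rangle,k(n)$, the spectrum $\sE$ is an effective $\MGL_p$-module (lemma~\ref{lem:effectivity of various BP-related quotients}), $\sR=\MGL$ is a cofibrant commutative monoid lying in $\SH^\eff(S)$, and by proposition~\ref{prop:zeroth slices of BP and BP<n>}(3) the canonical map $U(\sE/(\{x_i\}))\to s_0U\sE$ is an isomorphism, where the index set is $B_p$, $B\langle n\rangle_p$, or $\{x_{p^n-1}\}$ respectively. So proposition~\ref{prop:Conv1}(1), with $P$ the set of integers prime to $p$, gives the stated spectral sequence once one identifies $\pi^\mu_0(\sE)$ with $\Z_p$ --- or with $\Z/p$, as $s_0k(n)\cong H\Z/p$ --- and the factor $\Z[X]_{-q}$ with $\sE^{top}_{-2q}$ under $v_i\leftrightarrow x_{p^i-1}$, $\deg v_i=2(p^i-1)=2\deg_\L x_{p^i-1}$. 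For $E(n)=\BP\langle n\rangle[x_{p^n-1}^{-1}]$ and $K(n)=k(n)[x_{p^n-1}^{-1}]$ one uses proposition~\ref{prop:Conv2} instead, its hypotheses being inherited from the $\BP\langle n\rangle$ and $k(n)$ cases. Most concretely, one may simply substitute corollary~\ref{cor:slices of BP and BP<n>} into the definition \eqref{eqn:SliceTower0}: from $s_{-q}\sE\cong\Sigma^{-q}_TH_{\mathcal Z}\otimes\sE^{top}_{-2q}$ and $\Sigma^{-q}_T=\Sigma^{-2q,-q}$ one reads off $(s_{-q}\sE)^{p+q,m}(X)\cong H^{p-q}(X,\mathcal Z(m-q))\otimes_\Z\sE^{top}_{-2q}$.

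\emph{Convergence.} Let $S=\Spec k$ with $k$ perfect. By lemma~\ref{lem:SliceConv} it suffices to find a non-decreasing $f\colon\Z\to\Z$ with $f(b)\to\infty$ and $\pi_{a+b,b}\sE=0$ for $a\le f(b)$; I claim $f(b)=b-1$ will do, which translates into
\[
\sE^{2r+s,r}(\Spec k)=0\qquad\text{for all }r\in\Z\text{ and all }s\ge1.
\]
For $\sE=\BP,\BP\langle n\rangle,k(n)$ this follows from remark~\ref{rem:Converge}, since each is a homotopy colimit of quotient modules $\MGL_p/(a_1,\dots,a_m)$ with the $a_i$ drawn from $\{p\}\cup\{x_i\}_{i\ge1}$, so the vanishing descends from the corresponding vanishing for $\MGL_p$. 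For $\sE=E(n),K(n)$ one inverts $x_{p^n-1}$, which has positive $\L$-degree, and
\[
E(n)^{2r+s,r}(\Spec k)\cong\colim_{j\ge0}\ \BP\langle n\rangle^{2(r-j(p^n-1))+s,\ r-j(p^n-1)}(\Spec k)
\]
(and likewise for $K(n)$) is a filtered colimit of groups vanishing for $s\ge1$, so the vanishing survives. With this $f$, lemma~\ref{lem:SliceConv} yields strong convergence for every $Y\in\Sm/k$ and every $n\in\Z$.

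\emph{The crux.} The only non-formal ingredient is the coefficient-level vanishing $\MGL_p^{2r+s,r}(\Spec k)=0$ for $s\ge1$, uniformly in $r$. For $r\ge0$ it is just connectivity of the effective spectrum $\MGL_p$ over a perfect field (Morel). For $r<0$ it is the vanishing of negative-weight $\MGL_p$-cohomology of a point above twice the weight, which is not elementary: it rests on the Hopkins--Morel--Hoyois theorem~\ref{thm:HMH} and the ensuing strong convergence of the slice spectral sequence for $\MGL_p$. One should note that proposition~\ref{prop:Conv1}(2) cannot be quoted verbatim, because the uniform-in-$Y$ vanishing it hypothesizes genuinely fails here --- for $\dim Y>0$ one only gets vanishing for $s>\dim Y$ --- so the reduction to $Y=\Spec k$ followed by the bootstrap in lemma~\ref{lem:SliceConv} is essential. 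Granting the $\MGL_p$ input, remark~\ref{rem:Converge} together with colimit-stability under inverting $x_{p^n-1}$ handles all five spectra at once.
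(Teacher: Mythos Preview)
Your treatment of the $E_2$-page is fine and matches the paper's (which just quotes corollary~\ref{cor:slices of BP and BP<n>}). The convergence argument, however, contains a factual error that leads you on an unnecessary and somewhat shaky detour.

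You assert that ``the uniform-in-$Y$ vanishing [proposition~\ref{prop:Conv1}(2)] hypothesizes genuinely fails here --- for $\dim Y>0$ one only gets vanishing for $s>\dim Y$.'' This is wrong: the vanishing $\MGL_p^{2r+s,r}(Y)=0$ holds for \emph{all} $Y\in\Sm/k$, all $r\in\Z$, and all $s\ge1$. In the Hopkins--Morel--Hoyois spectral sequence $E_2^{p,q}(n)=H^{p-q}(Y,\Z(n-q))\otimes\L_{-q}$, one has $\L_{-q}=0$ for $q>0$, and for $q\le0$ the motivic cohomology group $H^{p-q}(Y,\Z(n-q))$ vanishes whenever $p-q>2(n-q)$, i.e.\ $p+q>2n$ --- this is the elementary bound $H^a(Y,\Z(b))=\CH^b(Y,2b-a)=0$ for $a>2b$, which is independent of $\dim Y$. (You seem to have in mind the other bound $H^a(Y,\Z(b))=0$ for $a>b+\dim Y$, which is not what is relevant here.) Since the spectral sequence converges strongly by \cite[theorem 8.12]{Hoyois}, the claimed uniform vanishing follows. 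This is exactly the paper's argument: it invokes remark~\ref{rem:Converge} together with this uniform vanishing for $\MGL$, which feeds directly into proposition~\ref{prop:Conv2}.

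Your workaround --- reduce to $Y=\Spec k$ and then invoke lemma~\ref{lem:SliceConv} --- is therefore unnecessary, and it is also not clearly sufficient as stated: in lemma~\ref{lem:SliceConv} the hypothesis $\pi_{a+b,b}\sE=0$ refers to the homotopy \emph{sheaf} (this is why proposition~\ref{prop:Conv1}(2) translates it into a vanishing for all $Y\in\Sm/S$), so checking only $\sE^{2r+s,r}(\Spec k)=0$ does not literally verify it. Once you drop the mistaken claim and use the uniform vanishing, your argument collapses to the paper's.
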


\begin{proof} The form of the
slice spectral sequence for $\sE$ follows from corollary~\ref{cor:slices of BP and BP<n>}.
The fact that the slice spectral sequences strongly converge for $S=\Spec k$, $k$ perfect, follows from
remark~\ref{rem:Converge} and the fact that $\MGL^{2r+s,r}(Y)=0$ for all
$Y\in\Sm/S$, $r\in\Z$ and $s\ge1$. This in turn follows from the
Hopkins-Morel-Hoyois spectral sequence
\[
E_2^{p,q}(n):=H^{p-q}(Y,\Z(n-q))\otimes\L_{-q}\Longrightarrow \MGL^{p+q,n}(Y)
\]
which is strongly convergent by \cite[theorem 8.12]{Hoyois}. 
\end{proof}

\section{Modules for oriented theories}\label{sec:modules}
We will use the slice spectral sequence to compute the ``geometric
part'' $\sE^{2*,*}$ of a quotient spectrum $\sE=\MGL_p/(\{x_{i_j}\})$ in terms
of algebraic cobordism, when working over a base field $k$ of characteristic
zero. As the quotient spectra are naturally $\MGL_p$-modules but may not have a
ring structure, we will need to extend the existing theory of oriented
Borel-Moore homology and related structures to allow for modules over ring-based
theories.

\subsection{Oriented Borel-Moore homology}
We first discuss the extension of oriented Borel-Moore homology. We use the
notation of \cite{LM}.  Let $\Sch/k$ be the category of quasi-projective schemes
over a field $k$ and let $\Sch/k'$ denote the subcategory of projective morphisms in 
$\Sch/k$. Let $\Ab_*$ denote the category of graded abelian groups, $\Ab_{**}$ the category of bi-graded abelian groups. 

\begin{definition} Let $A$ be an oriented Borel-Moore homology theory on
$\Sch/k$. An oriented $A$-module $B$ is given by
\\\\
(MD1) An additive functor $B_*:\Sch/k'\to {\bf Ab}_*$, $X\mapsto B_*(X)$.\\
(MD2) For each \lci morphism $f:Y\to X$ in $\Sch/k$ of relative dimension $d$, a
homomorphism of graded groups $f^*:B_*(X)\to B_{*+d}(Y)$.\\
(MD3) For each pair $(X,Y)$ of objects in $\Sch/k$ a bilinear graded pairing
\begin{align*}
A_*(X)\otimes B_*(Y)&\to B_*(X\times_kY)\\
u\otimes v&\mapsto u\times v
\end{align*}
which is associative and unital with respect to the external products in the
theory $A$. 

These satisfy the conditions (BM1), (BM2), (PB) and (EH) of \cite[definition
5.1.3]{LM}. In addition, these satisfy the following modification of (BM3)
\\\\
(MBM3) Let $f:X'\to X$ and $g:Y'\to Y$ be morphisms in $\Sch/k$. If $f$ and $g$
are projective, then for $u'\in A_*(X')$, $v'\in B_*(Y')$, one has
\[
(f\times g)_*(u'\times v')=f_*(u')\times g_*(v').
\]
If $f$ and $g$ are \lci morphisms, then for $u\in A_*(X)$, $v\in B_*(Y)$, one
has
\[
(f\times g)^*(u\times v)=f_*(u)\times g_*(v).
\]
\ \\
Let $f:A\to A'$ be a morphism of Borel-Moore homology theories, let $B$ be an
oriented $A$-module, $B'$ an oriented $A'$-module. A morphism $g:B\to B'$ over
$f$ is a collection of homomorphisms of graded abelian groups $g_X:B_*(X)\to
B'_*(X)$, $X\in \Sch/k$ such that the $g_X$ are compatible with projective
push-forward, \lci pull-back and external products.
\end{definition}
We do not require the analog of the axiom (CD) of \cite[definition
5.1.3]{LM}; this axiom plays a role only in the
proof of universality of $\Omega_*$, whereas the universality of $\Omega$ for
$A$-modules follows formally from the universality for $\Omega$ among oriented
Borel-Moore homology theories (see proposition~\ref{prop:universality} below).

\begin{example} Let $N_*$ be a graded module for the Lazard ring $\L_*$ and let
$A_*$ be an oriented Borel-Moore homology theory. Define
$A_*^N(X):=A_*(X)\otimes_{\L_*}N_*$. Then with push-forward
$f^N_*:=f^A_*\otimes\id_{N_*}$, pull-back   $f_N^*:=f_A^*\otimes\id_{N_*}$, and
product $u\times(v\otimes n):=(u\times v)\otimes n$, for $u\in A_*(X)$, $v\in
A_*(Y)$, $n\in N_*$, $A_*^N$ becomes an oriented $A$ module. Sending $N_*$ to
$A_*^N$ gives a functor from graded $\L_*$-modules to oriented $A$-modules.

In case $k$ has characteristic zero, we note that, for $A_*=\Omega_*$, we have a canonical isomorphism
$\theta_{N_*}:\Omega_*^{N_*}(k)\cong N_*$, as the classifying map $\L_*\to \Omega_*(k)$ is an
isomorphism \cite[theorem 1.2.7]{LM}.
\end{example} 

Just as for a Borel-Moore homology theory, one can define operations of $A_*(Y)$
on $B_*(Z)$ via a  morphism $f:Z\to Y$, assuming that $Y$ is in $\Sm/k$: for
$a\in A_*(Y)$, $b\in B_*(Z)$, define $a\cap_fb\in B_*(Z)$ by 
\[
a\cap_fb:=(f,\id_Z)^*(a\times b)
\]
where $(f,\id_Z):Z\to Y\times_kZ$ is the (transpose of) the graph embedding. As
$Y$ is smooth over $k$, $(f,\id_Z)$ is an \lci morphism, so the pullback
$(f,\id_Z)^*$ is defined. Similarly, $B_*(Y)$ is an $A_*(Y)$-module via
\[
a\cup_Yb:=\delta_Y^*(a\times b).
\]
These products satisfy the analog of the properties listed in \cite[\S
5.1.4, proposition 5.2.1]{LM}.

\begin{proposition}\label{prop:universality} Let $A$ be an oriented Borel-Moore
homology theory on $\Sch/k$ and let $B$ be an oriented $A$-module. Let 
$\vartheta_A :\Omega_*\to A_*$ be the classifying map. There is a unique
morphism $\theta_{A/B}:\Omega_*^{B_*(k)}\to B_*$ over $\vartheta_A$ such that
$\theta_{A/B}(k):\Omega_*^{B_*(k)}(k)\to B_*(k)$ is the canonical isomorphism
$\theta_{B_*(k)}$.
\end{proposition}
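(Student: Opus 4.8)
The plan is to mimic the proof of the universality of $\Omega_*$ among oriented Borel-Moore homology theories from \cite{LM}, but to carry along the module data. Recall that the classifying map $\vartheta_A:\Omega_*\to A_*$ is built by first constructing, for each $X\in\Sch/k$, a natural transformation using the geometric presentation of $\Omega_*(X)$: every class in $\Omega_*(X)$ is represented by a linear combination of cobordism cycles $[f:Y\to X]$ with $Y$ smooth and $f$ projective, and the relations are generated by the ``double point'' (resp. algebraic cobordism) relations. So the first step is to define $\theta_{A/B}(X):\Omega_*^{B_*(k)}(X)=\Omega_*(X)\otimes_{\L_*}B_*(k)\to B_*(X)$ on generators by
\[
[f:Y\to X]\otimes b\ \longmapsto\ f_*\bigl(p_Y^*(1_\Omega)\cap\ (\text{action of }b)\bigr),
\]
more precisely: for $Y$ smooth over $k$ with structure map $\pi_Y:Y\to\Spec k$, send $[f:Y\to X]\otimes b$ to $f_*\bigl(\pi_Y^*(b)\bigr)$, where $\pi_Y^*:B_*(k)\to B_*(Y)$ is the \lci pullback (MD2) and $f_*$ is projective push-forward (MD1). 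This is manifestly the ``module analogue'' of the formula $[f:Y\to X]\mapsto f_*\pi_Y^*(1)$ that defines $\vartheta_A$.

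The second step is to check this is well defined, i.e.\ it kills the relations defining $\Omega_*$. Here is where I would invoke the universality theorem \emph{for $A$ itself} rather than redo the geometry: the key point (already flagged in the remark after the definition that axiom (CD) is not needed) is that the relations in $\Omega_*(X)$ are, after the work of \cite{LM}, consequences of the formal group law relations and the basic functoriality, all of which now hold for $B_*$ by axioms (MD1)--(MD3), (MBM3), (BM1), (BM2), (PB), (EH). Concretely, one writes the module structure map $A_*(X)\otimes B_*(k)\to B_*(X\times_k X)\to B_*(X)$ (using $\cup_X$ for $X$ smooth, or more generally the $\cap_f$ construction) to see that $\theta_{A/B}(X)$ factors as
\[
\Omega_*(X)\otimes_{\L_*}B_*(k)\xrightarrow{\vartheta_A(X)\otimes\id} A_*(X)\otimes_{\L_*}B_*(k)\to B_*(X),
\]
where the second arrow is the canonical pairing coming from the oriented $A$-module structure (tensored down over $\L_*$, which is legitimate because the pairing is $\L_*$-bilinear via the orientations). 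Since $\vartheta_A$ is already known to be well defined, well-definedness of $\theta_{A/B}$ reduces to checking the pairing $A_*(X)\otimes_{\L_*}B_*(k)\to B_*(X)$ is well defined over $\L_*$ — which is immediate from the axioms. Compatibility with projective push-forward, \lci pull-back, and external products then follows from the corresponding compatibilities of $\vartheta_A$ together with axioms (MD2), (MD3), (MBM3); and $\theta_{A/B}(k)=\theta_{B_*(k)}$ holds by construction because over $\Spec k$ the generator $[\id_{\Spec k}]\otimes b$ maps to $b$.

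For uniqueness, I would argue that any morphism $g:\Omega_*^{B_*(k)}\to B_*$ over $\vartheta_A$ with $g(k)=\theta_{B_*(k)}$ is forced on generators: given $[f:Y\to X]\otimes b$ with $Y$ smooth, write it as $f_*\bigl(\pi_Y^*([\id_{\Spec k}]\otimes b)\bigr)$ inside $\Omega_*^{B_*(k)}(X)$ — this is possible because $[f:Y\to X]=f_*\pi_Y^*(1_\Omega)$ in $\Omega_*$ and the module generators are pulled back from $\Spec k$ — and then compatibility of $g$ with $f_*$ and $\pi_Y^*$ forces $g([f:Y\to X]\otimes b)=f_*\pi_Y^*(g(k)([\id]\otimes b))=f_*\pi_Y^*(b)=\theta_{A/B}([f:Y\to X]\otimes b)$. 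Since such classes generate $\Omega_*^{B_*(k)}(X)$ as a group, $g=\theta_{A/B}$.

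The main obstacle I anticipate is purely bookkeeping: verifying that the pairing $A_*(X)\otimes B_*(k)\to B_*(X)$ descends to $\otimes_{\L_*}$ and that the resulting $\theta_{A/B}$ genuinely respects \emph{all} the structure — in particular checking the two clauses of (MBM3) and the projection-formula-type identities needed so that push-forward and pull-back interact correctly with the external products $A_*(X)\otimes B_*(Y)\to B_*(X\times Y)$. None of this is deep, but one must be careful that every identity used for $B_*$ is among the listed axioms (MD1)--(MD3), (MBM3), (BM1), (BM2), (PB), (EH) and does not secretly require (CD) or a ring structure on $B$. Because the generation and relations statements for $\Omega_*$ are imported wholesale from \cite{LM}, the ``geometric'' content is entirely outsourced, and what remains is the verification that the oriented $A$-module axioms suffice to transport $\vartheta_A$ to $\theta_{A/B}$.
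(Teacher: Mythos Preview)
Your proposal is correct, and the core idea matches the paper's, but you have taken a detour that the paper avoids entirely. The paper's proof is one line: define $\theta_{A/B}(u\otimes b):=\vartheta_A(u)\times b\in B_*(X\times_k\Spec k)=B_*(X)$ directly, using the external product (MD3) with $Y=\Spec k$. What you discover in your ``second step''---the factorization through $\vartheta_A(X)\otimes\id$ followed by the pairing---\emph{is} the paper's definition; there is no need to first write down the generator formula $[f:Y\to X]\otimes b\mapsto f_*\pi_Y^*(b)$ and then check relations, because $\vartheta_A$ is already well defined on all of $\Omega_*(X)$ and the external product is defined for arbitrary classes. Your anticipated ``bookkeeping obstacle'' largely evaporates: compatibility with push-forward, pull-back, and products for $\theta_{A/B}$ is immediate from the corresponding compatibilities for $\vartheta_A$ together with (MBM3), with no need to revisit generators or the geometric relations in $\Omega_*$.

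For uniqueness, the paper's argument is also slicker than yours: rather than decomposing generators as $f_*\pi_Y^*([\id]\otimes b)$, one simply uses unitality of the external product. Any morphism $g$ over $\vartheta_A$ must satisfy $g(u\otimes b)=g\bigl((u\times 1_\Omega)\otimes b\bigr)=\vartheta_A(u)\times g(1_\Omega\otimes b)=\vartheta_A(u)\times b$, the middle equality being compatibility with external products and the last being the hypothesis $g(k)=\theta_{B_*(k)}$. Your generator argument works too, but it requires invoking compatibility with both push-forward and \lci pull-back, whereas the paper needs only the product axiom.
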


\begin{proof}
For $X\in \Sch/k$, $b\in B_*(k)$  and  $u\in \Omega_*(X)$, we define
$\theta_{A/B}(u\otimes b):=\vartheta_A(u)\times b\in B_*(X\times_kk)=B_*(X)$. It
is easy to check that this defines a morphism over $\vartheta_A$. Uniqueness
follows easily from the fact that the product structure in $A$ and $\Omega$ is
unital.
\end{proof}

\subsection{Oriented duality theories}
Next, we discuss a theory of modules for an oriented duality theory $(H, A)$. We
use the notation and definitions from \cite{LevineOrient}. In particular, we have the category $\SP$ of smooth pairs over $k$, with objects $(M, X)$, $M\in \Sm/k$, $X\subset M$ a closed subset, and where a morphism $f:(M,X)\to (N,Y)$ is a morphism $f:M\to N$ in $\Sm/k$ such that $f^{-1}(Y)\subset X$.

\begin{definition} Let $A$ be a bi-graded oriented ring cohomology theory, in
the sense of \cite[definition 1.5, remark 1.6]{LevineOrient}. An oriented
$A$-module $B$ is a bi-graded cohomology theory on $\SP$, satisfying the analog
of \cite[definition 1.5]{LevineOrient}, that is:
for each pair of smooth pairs $(M, X)$, $(N,Y)$ there is a bi-graded
homomorphism
\[
\times: A^{**}_X(M)\otimes B^{**}_Y(N)\to B^{**}_{X\times Y}(M\times_kN)
\]
satisfying
\begin{enumerate}
\item {\em associativity}: $(a\times b)\times c=a\times(b\times c)$ for $a\in
A^{**}_X(M)$, $b\in A^{**}_Y(N)$, $c\in B^{**}_Z(P)$
\item {\em unit}: $1\times a=a$.
\item {\em Leibniz rule}: Given smooth pairs $(M,X)$, $(M, X')$, $(N,Y)$ with
$X\subset X'$ we have
\[
\partial_{M\times N, X'\times N, X\times N}(a\times b)=\partial_{M, X',
X}(a)\times b
\]
for $a\in A^{**}_{X'\setminus X}(M\setminus X)$, $b\in B^{**}_Y(N)$. For  a
triple $(N, Y', Y)$ with $Y\subset Y'\subset N$,  $a\in A^{m,*}_{X'}(M)$, $b\in
B_{Y'\setminus Y}(N\setminus Y)$ we have
\[
\partial_{M\times N, M\times Y', M\times Y}(a\times b)=(-1)^m
a\times\partial_{N, Y', Y}(b).
\]
\end{enumerate}
We write $a\cup b\in B_{X\cap Y}(M)$ for $\delta^*_M(a\times b)$, $a\in
A^{**}_X(M)$, $b\in B^{**}_Y(M)$.

In addition, we assume that the ``Thom classes theory'' \cite[lemma
3.7.2]{Panin} arising from the orientation on $A$ induces an orientation on $B$
in the following sense: Let $(M, X)$ be a smooth pair and let $p:E\to M$ be a
rank $r$ vector bundle on $M$. Then the cup product with the Thom class
$th(E)\in A^{2r,r}_M(E)$
\[
B^{**}_X(M)\xrightarrow{p^*}B^{**}_{p^{-1}(X)}(E)\xrightarrow{th(E)\cup(-)}B^{
2r+*, r+*}_X(E)
\]
is an isomorphism.
\end{definition}

Let $\SP'$ be the category with the same objects $(M,X)$ as in $\SP$, and where a morphism $f:(M,X)\to (N,Y)$ is a projective morphism $f:M\to N$ such that $f(X)\subset Y$. One proceeds just as in \cite{LevineOrient} to show that the orientation on $B$
gives rise to an integration on $B$,  that is,  one has for each morphism
$F:(M,X)\to (N,Y)$ in $\SP'$ a pushforward map $F_*:B^{**}_X(M)\to B^{*-2d,
*-d}_Y(N)$, $d=\dim_kM-\dim_kN$, defining an {\em integration with supports} for
$B$, in the sense of \cite[definition 1.8]{LevineOrient}, with the introduction
of the bi-grading and the evident change to definition 1.8(2), in that the product
$f^*(-)\cup$ is a map from $A^{**}_Z(M)\otimes B^{**}_Y(N)$ to $B_{Y\cap
f^{-1}(Z)}(N)$, and $\cup$ is  similarly a map from $A^{**}_Z(M)\otimes
B^{**}_X(M)$ to $B_{X\cap Z}(M)$.  One similarly proves the analog of
\cite[theorem 1.12]{LevineOrient}, that the integration so constructed is the
unique integration on $B$ subjected to the orientation induced by the
orientation on $A$.

\begin{definition} Let $(H, A)$ be an oriented duality theory, in the sense of
\cite[definition 3.1]{LevineOrient}. An oriented $(H,A)$-module is a pair $(J, B)$,
where\\
(D1)  $J:\Sch/k'\to \Ab_{**}$ is a functor \\
(D2)  $B$ is an oriented $A$-module, \\
(D3) For each open immersion $j:U\to X$  there is a pullback map
$j^*:J_{**}(X)\to J_{**}(U)$ \\
(D4) i.   for each smooth pair $(M,X)$ and each morphism $f:Y\to M$ in $\Sch/k$
and bi-graded cap product map
\[
f^*(-)\cap:A_X(M)\otimes H(Y)\to H(f^{-1}(X))
\]
ii. For $X, Y\in \Sch/k$ a bi-graded external product
\[
\times: H_{**}(X)\otimes J_{**}(Y)\to J_{**}(X\times Y).
\]
(D5) For each smooth pair $(M, X)$, a graded isomorphism
\[
\beta_{M,X}:J_{**}(X)\to B_X^{2d-*, d-*}(M);\quad d=\dim_kM.
\]
(D6) For each $X\in \Sch/k$ and each closed subset $Y\subset X$, a   map
\[
\partial_{X,Y}:J_{*+1,*}(X\setminus Y)\to J_{**}(Y).
\]
These satisfy the evident analogs of properties (A1)-(A4) of \cite[definition
3.1]{LevineOrient}, where we make the following changes: Let  $d=\dim_kM$, $e=\dim_kN$. One  replaces $H$ with
$J_{**}$   throughout (except in (A3)(ii)), and
\begin{itemize}
\item in (A1) one replaces   $A_Y(N)$, $A_X(M)$ with $B_Y^{2d-*,d-*}(N)$,
$B_X^{2d-*,d-*}(M)$,
\item  in (A2) on replaces $A_Y(N)$, $A_X(M)$ with $B_Y^{2e-*,e-*}(N)$,
$B_X^{2d-*,d-*}(M)$, 
\item  in (A3)(i) one replaces $A_Y(M)$ with $B_Y^{2d-*, d-*}(M)$ and $A_{Y\cap
f^{-1}(X)}(N)$\\
with $B^{2e-*, e-*}_{Y\cap f^{-1}(X)}(N)$, 
\item in (A3)(ii) one replaces  $A_Y(M)$ with $B_Y^{2e-*, e-*}(N)$ and 
$A_{X\times Y}(M\times N)$ \\
 with $B^{2(d+e)-*, d+e-*}_{X\times Y}(M\times N)$, $H(X)$ with $H_{**}(X)$,
$H(Y)$ with $J_{**}(Y)$ \\
 and $H(X\times Y)$ with $J_{**}(X\times Y)$.
\item in (A4) one replaces $A_{X\setminus Y}(M\setminus Y)$ with $B^{2d-*,
d-*}_{X\setminus Y}(M\setminus Y)$.
\end{itemize}
\end{definition}

\begin{remark}  \label{rem:Univ} Let  $(H, A)$ be an oriented duality theory on
$\Sch/k$, for $k$ a field admitting resolution of singularities.  By
\cite[proposition 4.2]{LevineOrient} there is a unique natural transformation 
\[
\vartheta_H:\Omega_*\to H_{2*,*}
\]
of functors $\Sch/k'\to \Ab_*$  compatible with all the structures available
for $H_{2*,*}$ and, after restriction to $\Sm/k$ is just the classifying map
$\Omega^*\to A^{2*,*}$ for the oriented cohomology theory $X\mapsto
A^{2*,*}(X)$. We refer the reader to \cite[\S 4]{LevineOrient} for a complete
description of the properties satisfied by $\vartheta_H$.

Via $\vartheta_H$ and the ring homomorphism $\rho_\Omega:\L_*\to \Omega_*(k)$
classifying the formal group law for $\Omega_*$, we have the ring homomorphism
$\rho_H:\L_*\to H_{2*,*}(k)$. If $(J, B)$ is an  oriented $(H, A)$-module, then
via the $H_{2*,*}(k)$-module structure on $J_{2*,*}(k)$, $\rho_H$ makes 
$J_{2*,*}(k)$ a $\L_*$-module. We write $J_*$ for the $\L_*$-module
$J_{2*,*}(k)$.
\end{remark}

\begin{proposition}\label{prop:Univ2} Let $k$ be a field admitting resolution of
singularities. Let  $(H, A)$ be an oriented duality theory and $(J, B)$ an 
oriented $(H, A)$-module. 
There is a unique natural transformation $\vartheta_{H/J}:\Omega_*^{J_*}\to
J_{2*,*}$ from $\Sch/k'\to \Ab_*$   satisfying
\begin{enumerate}
\item $\vartheta_{H/J}$ is compatible with pullback maps $j^*$ for $j:U\to X$ an
open immersion in $\Sch/k$.
\item $\vartheta_{H/J}$ is compatible with fundamental classes.
\item $\vartheta_{H/J}$ is compatible with external products.
\item $\vartheta_{H/J}$ is compatible with the action of 1st Chern class
operators.
\item Identifying $\Omega_*^{J_*}(k)$ with $J_{2*,*}(k)$ via the product
map $\Omega_*(k)\otimes_{\L_*}J_{2*,*}(k)\to J_{2*,*}(k)$, 
$\vartheta_{H/J}(k):\Omega_*^{J_*}(k)\to J_{2*,*}$ is  the identity map.
\end{enumerate}
\end{proposition}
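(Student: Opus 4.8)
The plan is to mimic the construction of $\vartheta_H$ recorded in Remark~\ref{rem:Univ} (i.e.\ \cite[proposition 4.2]{LevineOrient}) together with the module-level device used in the proof of Proposition~\ref{prop:universality}. Concretely, for $X\in\Sch/k'$ an element of $\Omega_*^{J_*}(X)=\Omega_*(X)\otimes_{\L_*}J_*$ is a finite sum of simple tensors $u\otimes b$ with $u\in\Omega_*(X)$ and $b\in J_*=J_{2*,*}(k)$. Applying $\vartheta_H$ to $u$ and the external product (D4)(ii) for the pair $(X,\Spec k)$, followed by the canonical identification $X\times_k\Spec k=X$, one sets
\[
\vartheta_{H/J}(u\otimes b):=\vartheta_H(u)\times b\in J_{2*,*}(X\times_k\Spec k)=J_{2*,*}(X).
\]
The grading matches: $u\in\Omega_n(X)$ gives $\vartheta_H(u)\in H_{2n,n}(X)$ and $b\in J_{2m,m}(k)$, so the product lands in $J_{2(n+m),n+m}(X)$, as required of an element of degree $n+m$ in $\Omega_*^{J_*}(X)$.

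The first thing to check is that this descends to the tensor product over $\L_*$. The $\L_*$-action on $\Omega_*(X)$ is via external product with $\rho_\Omega(\alpha)\in\Omega_*(k)$ and the $\L_*$-action on $J_*$ is $b\mapsto\rho_H(\alpha)\cup b$ with $\rho_H=\vartheta_H\circ\rho_\Omega$ (Remark~\ref{rem:Univ}); one therefore needs
\[
\vartheta_H\bigl(\rho_\Omega(\alpha)\cdot u\bigr)\times b=\vartheta_H(u)\times\bigl(\rho_H(\alpha)\cup b\bigr)\quad\text{for }\alpha\in\L_*,
\]
which follows from the compatibility of $\vartheta_H$ with external products and with the $A$-module structure on $B$, combined with the associativity of the three pairings $A\otimes A\to A$, $A\otimes B\to B$, $H\otimes J\to J$: both sides equal $\vartheta_H(\rho_\Omega(\alpha))\times(\vartheta_H(u)\times b)$ after reshuffling the auxiliary $\Spec k$-factors. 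Next I would verify naturality in $\Sch/k'$ for projective push-forward and then properties (1)--(4): on the generators $u\otimes b$ each of these reduces to the corresponding property already established for $\vartheta_H$ in \cite[\S 4]{LevineOrient} (compatibility with open pull-back $j^*$, with fundamental classes, with external products, with the action of first Chern class operators), fed through the external product $H_{**}\otimes J_{**}\to J_{**}$ and its associativity, unit, and Leibniz rules together with the module-axiom analogs (A1)--(A4) of the oriented $(H,A)$-module structure. Property (5) is immediate from the unit axiom: over $\Spec k$ the identification $\Omega_*^{J_*}(k)\cong J_*$ is $u\otimes b\mapsto u\cdot b$ (since $\L_*\to\Omega_*(k)$ is an isomorphism, \cite[theorem 1.2.7]{LM}), which matches $\vartheta_{H/J}(k)(u\otimes b)=\vartheta_H(u)\times b$ once $\times$ over $\Spec k$ is identified with the module cup product.

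Uniqueness follows along the lines of \cite[proposition 4.2]{LevineOrient} and Proposition~\ref{prop:universality}: any $\vartheta'$ satisfying (1)--(5) must agree with $\vartheta_{H/J}$ on classes of the form $f_*(\tilde u)\otimes b$ with $f$ projective and $\tilde u$ a product of Chern class operators applied to a fundamental class, by (2), (4), naturality, and the unit axiom; since such classes generate $\Omega_*^{J_*}(X)$ over all $X$ (this is where the universality of $\Omega_*$ among oriented Borel--Moore homology theories, resp.\ oriented $(H,A)$-modules, enters, exactly as noted after the definition of oriented $A$-modules), the two transformations coincide. The main obstacle I expect is purely bookkeeping: keeping the auxiliary $\Spec k$-factors, the bi-grading, and the signs in the Leibniz rule straight when transporting each structural identity for $\vartheta_H$ across the pairing with $J_*$, and matching the (A1)--(A4) analogs precisely to (A1)--(A4) of \cite{LevineOrient}; conceptually it is a routine translation of the ring-theoretic argument to the module setting.
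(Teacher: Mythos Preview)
Your proposal is correct and follows essentially the same approach as the paper: define $\vartheta_{H/J}(u\otimes b):=\vartheta_H(u)\times b$ via the external product (D4)(ii), verify properties (1)--(5) from the corresponding properties of $\vartheta_H$, and deduce uniqueness from the fact that $\Omega_*(X)$ is generated by push-forwards of fundamental classes. The paper's write-up is terser (it omits the well-definedness check over $\L_*$ and cites only (2), (3), (5) for uniqueness rather than your (2), (4), naturality), but the argument is the same.
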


\begin{proof} For $X\in\Sch/k$, we define $\vartheta_{H/J}(X)$ by
\[
\vartheta_{H/J}(u\otimes j)=\vartheta_H(u)\times j\in J_{2*,*}(X\times_k\Spec
k)=J_{2*,*}(X),
\]
for $u\otimes j\in \Omega_*^{J_*}(X):=\Omega_*(X)\otimes_{\L_*}J_{2*,*}(k)$. The
properties (1)-(5) follow directly from the construction. As $\Omega_*(X)$ is
generated by push-forwards of fundamental classes, the properties (2), (3) and
(5) determine $\vartheta_{H/J}$ uniquely.
\end{proof}

\begin{remark} \label{rem:products} Let $k$, $(H,A)$ and $(J,B)$ be as in
proposition~\ref{prop:Univ2}. Suppose that $J_*:=J_{2*,*}$ has external products
$\times_J$ and there is a unit element $1_J\in J_0(k)$ for these external
products. Suppose further that these are compatible with the external products
$H_*(X)\otimes J_*(Y)\to J_*(X\times_kY)$, in the sense that
\[
(h\times 1_J)\times_J b=h\times b\in J_*(X\times_kY)
\]
for $h\in H_*(X)$, $b\in J_*(Y)$, and that $1_H\times 1_J=1_J$.  Then
$\vartheta_{H/J}$ is compatible with external products and is unital. This
follows directly from our assumptions and the identity
\[
\vartheta_{H/J}((u\otimes h)\times(u'\otimes j'))=\vartheta_H(u)\times
\vartheta_{H/J}(u'\otimes (h\times j)).
\]
\end{remark}

\subsection{Modules for oriented ring spectra}
We now discuss the oriented duality theory and oriented Borel-Moore homology
associated to a module spectrum for an oriented weak commutative ring
$T$-spectrum.

Let $\ph$ be the two-sided ideal of phantom maps in $\SH(S)$, that is a map $f:\sE\to \sF$
that vanishes after pre-composition with a map from a compact object. 
Let $\sE$ be a weak commutative ring $T$-spectrum, that is, there are maps $\mu:\sE\wedge\sE\to \sE$, $\eta:\mS_S\to \sE$ in $\SH(S)$ that satisfy the axioms for a monoid modulo phantom maps.  An $\sE$-module  is similarly an object $\sN\in \SH(S)$ together
with a multiplication map $\rho:\sE\wedge\sN\to \sE$ that makes $\sN$ into a
unital $\sE$-module modulo phantoms.

Suppose that $(\sE, c)$ is an oriented weak commutative ring $T$-spectrum in
$\SH(k)$, $k$ a field admitting resolution of singularities. We have constructed
in \cite[theorem 3.4]{LevineOrient} a bi-graded oriented duality theory
$(\sE'_{**}, \sE^{**})$ by defining $\sE'_{a,b}(X):=\sE_X^{2m-a, m-b}(M)$, where $M\in\Sm/k$ is a chosen smooth quasi-projective scheme
containing $X$ as a closed subscheme and 
$m=\dim_kM$. Let $\sN$ be an $\sE$-module. For $E\to M$
a rank $r$ vector bundle on $M\in \Sm/k$ and $X\subset M$ a closed subscheme,
the Thom classes for $\sE$ give rise to a Thom isomorphism $\sN^{**}_X(M)\to
\sN^{2r+*, r+*}_X(E)$.

Using these Thom isomorphisms, the arguments used to construct the oriented
duality theory $(\sE'_{**}, \sE^{**})$ go through without change to give $\sN^{**}$ the
structure of an oriented $\sE^{**}$-module, and to define an oriented $(\sE'_{**}, \sE^{**})$-module $(\sN'_{**}, \sN^{**})$, with 
canonical isomorphisms $\sN'_{a,b}(X)\cong \sN_X^{2m-a, m-b}(M)$, $m=\dim_kM$, and where the cap products are induced by the $\sE$-modules structure on $\sN$.

\subsection{Geometrically Landweber exact modules}

\begin{definition} Let $(\sE, c)$ be a weak oriented ring $T$-spectrum and let
$\sN$ be an $\sE$-module. The {\em geometric part} of $\sE^{**}$ is the $(2*,*)$-part $\sE^*:=\sE^{2*,*}$ of $\sE^{**}$, the geometric part of $\sN$ is the  
$\sE^*$-module $\sN^{2*,*}$, and the geometric part of
$\sN'$ is similarly given by $X\mapsto \sN'_*(X):=\sN'_{2*,*}(X)$. This gives us the $\Z$-graded oriented duality theory $(\sE'_*, \sE^*)$ and the oriented $(\sE'_*, \sE^*)$-module $(\sN'_*, \sN^*)$.
\end{definition}

Let $(\sE, c)$ be a weak oriented ring $T$-spectrum and let
$\sN$ be an $\sE$-module. By proposition~\ref{prop:Univ2}, we have a canonical natural transformation 
\[
\vartheta_{\sE'/\sN'}:\Omega_*^{\sN'_*(k)}\to \sN'_*
\]
satisfying the compatibilities listed in that proposition. 

We extend the definition of a geometrically Landweber exact weak commutative
ring $T$ -spectrum (see \cite[definition 3.7]{LevineExact}) to the case of an
 $\sE$-module:

\begin{definition}\label{def:GeomLE} Let $(\sE, c)$ be a weak oriented ring
$T$-spectrum and let $\sN$ be an $\sE$-module. We say that $\sN$ is {\em 
geometrically Landweber exact} if  for each point $\eta\in X\in\Sm/k$\\\\
i.  The structure map $p_\eta:\eta\to \Spec k$ induces an isomorphism
$p_\eta^*:\sN^{2*, *}(k)\to \sN^{2*,*}(\eta)$.\\
ii. The product map $\cup_\eta: \sE^{1,1}(\eta)\otimes\sN^{2*,*}(\eta)\to
\sN^{2*+1, *+1}(\eta)$ induces a surjection $k(\eta)^\times\otimes
\sN^{2*,*}(\eta)\to \sN^{2*+1, *+1}(\eta)$
\\\\
Here we use the canonical natural transformation $t_\sE:\G_m\to \sE^{1,1}(-)$
defined in \cite[remark 1.5]{LevineExact} to define the map $k(\eta)^\times\to
\sE^{1,1}(\eta)$ needed in (ii).
\end{definition}

The following result generalizes \cite[theorem 6.2]{LevineExact} from oriented
weak commutative ring $T$-spectra to modules:

\begin{theorem} \label{thm:Rational} Let $k$ be a field of characteristic zero,
$\sN$ an $\MGL$-module in $\SH(k)$, $(\sN'_{**}, \sN^{**})$ the associated oriented $(\MGL'_{**},
\MGL^{**})$-module, and $\sN'_*$ the geometric part of $\sN'$. Suppose that $\sN$ is
geometrically Landweber exact.  Then the classifying map
\[
\vartheta_{\MGL'_*/\sN'_*}:\Omega_*^{\sN'_*(k)}\to \sN'_*
\]
is an isomorphism.
\end{theorem}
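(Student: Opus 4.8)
The plan is to run the proof of \cite[theorem 6.2]{LevineExact} essentially verbatim, systematically replacing the coefficient ring $\sE^{2*,*}(k)$ used there by the coefficient module $\sN^{2*,*}(k)$; the framework of modules over ring-valued theories set up in \S\ref{sec:modules} is designed precisely so that this substitution causes no difficulty. Recall that $\vartheta_{\MGL'_*/\sN'_*}$ is the morphism of oriented $\MGL'_*$-modules produced by proposition~\ref{prop:Univ2}, and that by its construction it is, on $\Spec k$, the canonical isomorphism $\Omega_*^{\sN'_*(k)}(k)\cong \sN'_*(k)$; this serves as the base of the dimension induction to come. The argument then splits into a reduction from $\Sch/k$ to $\Sm/k$, followed by an application on $\Sm/k$ of the ``free module over a free theory'' criterion underlying \cite[theorem 6.2]{LevineExact}.

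For the reduction: both $\Omega_*^{\sN'_*(k)}$ and $\sN'_*$ are the geometric parts of oriented $(\MGL'_{**},\MGL^{**})$-modules on $\Sch/k$ in the sense of \S\ref{sec:modules}, with $\vartheta_{\MGL'_*/\sN'_*}$ a morphism of such; in particular both carry localization sequences (via the boundary maps $\partial_{X,Y}$) and projective push-forwards, compatibly. Since $\Char k=0$ we have resolution of singularities, and the d\'evissage of \cite[\S 4]{LevineOrient} and \cite{LM} — Noetherian induction on $\dim_kX$, resolving a given reduced $X$ by a projective birational $\tilde X\to X$ with $\tilde X$ smooth which is an isomorphism over a dense open, and comparing localization sequences for the resulting closed decomposition — shows that $\vartheta_{\MGL'_*/\sN'_*}$ is an isomorphism on all of $\Sch/k$ once it is so on $\Sm/k$. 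For $X\in\Sm/k$ of dimension $m$, the duality isomorphisms $\beta_{M,X}$ identify this with the assertion that the classifying map $\Omega^*(X)\otimes_{\L_*}\sN^{2*,*}(k)\to \sN^{2*,*}(X)$ is an isomorphism of $\Omega^*(X)$-modules.

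On $\Sm/k$ one invokes $\MGL^{2*,*}=\Omega^*$ (the main theorem of \cite{LevineComp}) to regard $\sN^{2*,*}$ as a module over the free oriented cohomology theory $\Omega^*$, equipped with first Chern class operators coming from the orientation $c$ (it is the geometric part of the oriented $\MGL^{**}$-module $\sN^{**}$ of \cite{LevineOrient}). Conditions (i) and (ii) in the definition of geometric Landweber exactness are exactly the generic-constancy and divisorial-surjectivity hypotheses needed to run Vishik's d\'evissage \cite{Vishik}, as carried out for ring spectra in \cite[\S 5--6]{LevineExact}: an induction on the codimension of supports along the coniveau filtration, using (i) at generic points of subvarieties and (ii) for the codimension-one increments, shows simultaneously that $\sN^{2*,*}(X)$ is generated over $\Omega^*(X)$ (by Chern class operators and push-forwards) by the image of $\sN^{2*,*}(k)$, and that every relation among such classes already holds in $\Omega^*(X)\otimes_{\L_*}\sN^{2*,*}(k)$; that is, $\sN^{2*,*}$ is the free $\Omega^*$-module on $\sN^{2*,*}(k)$.

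The main obstacle is confirming that this last d\'evissage uses only the $\Omega^*$-module structure, the Chern class operators, and $\sN^{2*,*}(k)$-linear pullbacks and push-forwards on $\sN^{2*,*}$, and never a multiplication on $\sN^{2*,*}$ itself — which is the entire reason for developing modules over ring-valued theories rather than just ring theories. Granting this, every step of \cite[\S 5--6]{LevineExact} applies with $\sN^{2*,*}(k)$ in place of $\sE^{2*,*}(k)$, and combined with the freeness of $\Omega^*$ itself \cite{LevineComp} the freeness of $\sN^{2*,*}$ — hence the theorem — follows. (The $\Sch/k\to\Sm/k$ reduction also uses resolution of singularities essentially, which is why the hypothesis $\Char k=0$ cannot be weakened in this statement.)
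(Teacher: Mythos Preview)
Your core idea — the d\'evissage of \cite[\S5--6]{LevineExact} with the coefficient module $\sN'_*(k)$ in place of a coefficient ring, after checking that only the $\MGL$-module structure on $\sN$ is used and never an internal product — is exactly the paper's approach, and you correctly flag that check as the crux. The problem is your two-step decomposition. Your Step~1 asserts that $\Omega_*^{\sN'_*(k)}$ carries localization sequences with boundary maps $\partial_{X,Y}$ comparable to those of $\sN'_*$, but $\Omega_*$ has only the right-exact sequence $\Omega_*(Z)\to\Omega_*(X)\to\Omega_*(U)\to 0$; even after identifying $\Omega_*\cong\MGL'_{2*,*}$ via \cite{LevineComp}, tensoring the resulting long exact sequence with $\sN'_*(k)$ over $\L_*$ need not remain exact unless $\sN'_*(k)$ is $\L_*$-flat, which is not assumed. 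So the five-lemma reduction from $\Sch/k$ to $\Sm/k$ that you sketch is not available in general.

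The paper avoids this by running the induction directly on all of $\Sch/k$: for arbitrary $X$ of dimension $d$ it sets up a comparison diagram with the divisor map $\oplus_{\eta\in X_{(d)}}k(\eta)^\times\otimes\sN'_{*-d+1}\to (-)^{(1)}(X)$ on the left, followed by $(-)^{(1)}(X)\to (-)(X)\to \oplus_\eta(-)(\eta)\to 0$. The $\Omega^{\sN'_*}$-row need only be a complex with right-exact tail (which it is, since tensoring preserves right-exactness of the $\Omega$-localization); exactness of the $\sN'$-row comes from condition~(ii), the isomorphism at generic points from~(i), the isomorphism on $(-)^{(1)}$ from induction on $d$, and commutativity of the leftmost square from the Leibniz rule together with the $\MGL$-level diagram \cite[(5.4)]{LevineComp}. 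This is essentially your Steps~2--3, but carried out homologically on $\Sch/k$ at once rather than cohomologically on $\Sm/k$; the fix is simply to drop Step~1.
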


\begin{remark}  Let $k$ be a field of characteristic zero, and let $(\sE, c)$ be an oriented weak commutative ring $T$-spectrum
in $\SH(S)$, and let $\sN$ be an $\sE$-module. Via the classifying map
$\phi_{\sE,c}:\MGL\to \sE$, $\sN$ becomes an $\MGL$-module. In addition,  
the classifying map $\vartheta_{\sE'}:
\Omega_*\to \sE'_*$ is induced from $\phi_{\sE,c}$ and the classifying map
$\vartheta_{\MGL'_*/\sN'_*}$ factors through the classifying map
$\vartheta_{\sE'_*/\sN'_*}:\sE_*^{\prime \sN'_*(k)}\to \sN'_*$ as
\[
\vartheta_{\MGL'_*/\sN'_*}=\vartheta_{\sE'_*/\sN'_*}\circ
(\phi_{\sE,c}\otimes\id_{\sN'_*(k)}).
\]
Thus, theorem~\ref{thm:Rational} applies to $\sE$-modules for arbitrary $(\sE,
c)$. Moreover, if $(\sE,c)$ is geometrically Landweber exact in the sense of
\cite[definition 3.7]{LevineExact}, the map
$\bar{\vartheta}_{\sE'_*}:\Omega_*^{\sE'_*(k)}\to \sE'_*$ is an isomorphism
(\cite[theorem 6.2]{LevineExact}) hence the map $\vartheta_{\sE'_*/\sN'_*}$ is
an isomorphism as well.
\end{remark}

\begin{proof}[proof of theorem~\ref{thm:Rational}]
The proof of theorem~\ref{thm:Rational} is essentially the same as the proof of
\cite[theorem 6.2]{LevineExact}. Indeed, just as in {\it loc. cit.}, one
constructs a commutative diagram (see \cite[(6.4)]{LevineComp})
\begin{equation}\label{eqn:Diag3}
\xymatrixcolsep{14pt}
\xymatrix{
\oplus_{\eta\in X_{(d)}}k(\eta)^\times \otimes \sN'_{*-d+1}
\ar[r]^-{\Div_\sN}&\Omega_*^{\sN'_*(1)}(X)\ar[d]_{\bar\vartheta^{(1)}}\ar[r]^-{
i_*}&\Omega^{\sN'_*}_*(X)\ar[d]_{\bar\vartheta(X)}\ar[r]^-{j^*}&\oplus_{\eta\in
X_{(d)}}\Omega^{\sN'_*}_*(\eta)\ar[r]\ar[d]_{\bar\vartheta}&0\\
\oplus_{\eta\in X_{(d)}}k(\eta)^\times\otimes \sN'_{*-d+1} \ar[r]_-{div_\sN}
&\sN_{2*,*}^{\prime
(1)}(X)\ar[r]_-{i_*}&\sN_{2*,*}'(X)\ar[r]_-{j^*}&\oplus_{\eta\in
X_{(d)}}\sN_{2*,*}'(\eta)\ar[r]&0}
\end{equation}
where we write $\sN'_*$ for $\sN'_*(k)$, $d$ is the maximum of $\dim_kX_i$ as
$X_i$ runs over the irreducible components of $X$, and $\sN_{2*,*}^{\prime
(1)}(X)$ is the colimit of $\sN_{2*,*}'(W)$, as $W$ runs over closed subschemes
of $X$ containing no dimension $d$ generic point of $X$. A similarly defined
colimit of the $\Omega_*^{\sN'_*}(W)$ gives us  
$\Omega_*^{\sN'_*(1)}(X)$.   The maps $\bar{\vartheta}^{(1)}$,
$\bar{\vartheta}(X)$ and $\bar{\vartheta}$ are all induced by the classifying
map $\vartheta_{\MGL'_*/\sN'_*}$. The top row is a complex and the bottom row is
exact; this latter fact follows from the surjectivity assumption in 
definition~\ref{def:GeomLE}(ii). The map $\bar{\vartheta}$ is an isomorphism by
part (i) of definition~\ref{def:GeomLE} and $\bar\vartheta^{(1)}$ is an
isomorphism by induction on $d$. To show that $\bar\vartheta(X)$ is an
isomorphism, it suffices to show that the identity map on $\oplus_\eta
\sN'_{*-d+1}\otimes k(\eta)^\times$ extends diagram~\eqref{eqn:Diag3} to a
commutative diagram.

To see this, we note that the map $div_\sN$ is defined by composing the boundary
map
\[
\del:\oplus_{\eta\in X_{(d)}}\sN'_{2*+1, *}(\eta)\to \sN_{2*,*}^{\prime
(1)}(X)
\]
with the sum of the product maps $\MGL'_{2d-1, d-1}(\eta)\otimes \sN'_{*-d+1}(k)\to \sN'_{2*+1, *}(\eta)$ and the canonical map
$t_\MGL(\eta):k(\eta)^\times \to \MGL^{1,1}(\eta)=\MGL'_{2d-1, d-1}(\eta)$ (see
\cite[remark 1.5]{LevineComp}). For $\MGL'$, we have the similarly defined map
\[
div_\MGL:\oplus_{\eta\in X_{(d)}} k(\eta)^\times\otimes  \L_{*-d+1} \to
\MGL_{2*,*}^{\prime (1)}(X),
\]
after replacing $\MGL'_{*-d+1}(k)$ with $\L_{*-d+1}$ via the classifying map
$\L_*\to \MGL'_*(k)$. We have as well the
 commutative diagram (see \cite[(5.4)]{LevineComp})
\[
\xymatrixcolsep{25pt}
\xymatrix{
\oplus_{\eta\in X_{(d)}}k(\eta)^\times\otimes 
\L_{*-d+1}\ar[r]^-{\Div}\ar@{=}[d]&\Omega_*^{(1)}(X)\ar[d]^{\vartheta_\MGL^{(1)}
}\\
\oplus_{\eta\in X_{(d)}}k(\eta)^\times\otimes  \L_{*-d+1}\ar[r]_-{div_\MGL} 
&\MGL_{2*,*}^{\prime (1)}(X),
}
\]
 which after applying $-\otimes_{\L_*}\sN'_*$ gives us the commutative diagram
\begin{equation}\label{eqn:HM1}
\xymatrixcolsep{25pt}
\xymatrix{
\oplus_{\eta\in X_{(d)}}k(\eta)^\times\otimes 
\sN_{*-d+1}\ar[r]^-{\Div_\sN}\ar@{=}[d]&\Omega_*^{\sN_*'
(1)}(X)\ar[d]^{\vartheta_\MGL^{(1)}\otimes\id}\\
\oplus_{\eta\in X_{(d)}}k(\eta)^\times\otimes \sN_{*-d+1}\ar[r]_-{\overline{div}_\MGL} 
&\MGL_{2*,*}^{\prime (1)}(X)\otimes_{\L_*}\sN'_*
}
\end{equation}

The Leibniz rule for $\partial$ gives us the commutative diagram 
\begin{equation}\label{eqn:HM2}
\xymatrixcolsep{25pt}
\xymatrix{
\oplus_{\eta\in X_{(d)}}k(\eta)^\times\otimes \sN_{*-d+1}\ar[r]^-{\overline{div}_\MGL}
\ar@{=}[d]
&\MGL_{2*,*}^{\prime (1)}(X)\otimes_{\L_*}\sN'_*\ar[d]^{\cup}\\
\oplus_{\eta\in X_{(d)}}k(\eta)^\times\otimes \sN'_{*-d+1} \ar[r]_-{div_\sN}
&\sN_{2*,*}^{\prime (1)}(X);
}
\end{equation}
combining diagrams \eqref{eqn:HM1} and \eqref{eqn:HM2} yields the desired
commutativity.
\end{proof}

\section{Applications to quotients of $\MGL$}\label{sec:Apps}

We return to our discussion of quotients of $\MGL_p$ and their localizations. We
select a system of polynomial generators for the Lazard ring, $\L_*\cong \Z[x_1,
x_2,\ldots]$, $\deg\, x_i=i$. Let $\sS\subset \N$,  $\sS^c$ its complement and
 let $\Z[\sS^c]$ denote the graded polynomial ring on the $x_i$,
$i\in\sS^c$, $\deg x_i=i$. Let 
$\sS_0\subset \Z[\sS^c]$ be a collection of homogeneous elements, $\sS_0=\{z_j\in  \Z[\sS^c]_{e_j}\}$, and let  $\Z[\sS^c][\sS_0^{-1}]$ denote the localization of $\Z[\sS^c]$ with
respect to $\sS_0$.

We consider   a quotient spectrum
$\MGL_p/(\sS):=\MGL_p/(\{x_i\ |\ i\in \sS\})$ or an integral version
$\MGL/(\sS):=\MGL/(\{x_i\ |\ i\in \sS\})$. We consider as well the localizations
\begin{gather*}
\MGL_p/(\sS)[\sS_0^{-1}]:=\MGL_p/(\sS)[\{z_j^{-1}\ |\ z_j\in \sS_0\}],\\
\MGL/(\sS)[\sS_0^{-1}]:=\MGL/(\sS)[\{z_j^{-1}\ |\ z_j\in \sS_0\}].
\end{gather*}
and the mod $p$ version
\[
\MGL/(\sS,p)[\sS_0^{-1}]:=\MGL_p/(\sS)[\sS_0^{-1}]/p
\]

\begin{proposition}\label{prop:GeomLEQuot} Let $p$ be a prime, and let $S=\Spec
k$, $k$ a perfect field with exponential characteristic prime to $p$. Let $\sS$ be a subset
of $\N$ and $\sS_0$ a set of homogeneous elements of $\Z[\sS^c]$. Then the spectra 
 $\MGL_p/(\sS)[\sS_0^{-1}]$ and
$\MGL_p/(\sS,p)[\sS_0^{-1}]$ are geometrically Landweber exact. In case $\Char k=0$, 
$\MGL/(\sS)[\sS_0^{-1}]$ is geometrically Landweber exact.
\end{proposition}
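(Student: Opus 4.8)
The plan is to verify, for each of the three spectra, conditions (i) and (ii) of Definition~\ref{def:GeomLE} with $\sE=\MGL$, using the computation of the slices in Theorem~\ref{thm:slices of effective spectra} and its corollaries together with the slice spectral sequence. Write $\sN$ for one of $\MGL_p/(\sS)[\sS_0^{-1}]$, $\MGL_p/(\sS,p)[\sS_0^{-1}]$, or (when $\Char k=0$) $\MGL/(\sS)[\sS_0^{-1}]$; in each case $\sN$ is an $\MGL$-module — via $\MGL\to\MGL_p$ in the first two cases — so it carries the oriented $\MGL^{**}$-module structure of \S\ref{sec:modules} appearing in Definition~\ref{def:GeomLE}, and, modelling $\MGL$ as a cofibrant commutative monoid (as in \cite{PPR}), the results of \S\ref{sec:SlicesModuleSpectra} apply. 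First I would record the slices: by Theorem~\ref{thm:HMH} and the argument of Proposition~\ref{prop:zeroth slices of BP and BP<n>}, the hypotheses of Theorem~\ref{thm:slices of effective spectra} hold with $\sR=\MGL_p$ (resp.\ $\MGL$), $\sM=\MGL_p/(\sS)$ (resp.\ $\MGL/(\sS)$) and $X=\{\bar x_i:i\in\sS^c\}$, so by Corollaries~\ref{cor:slices of localized spectra}, \ref{cor:slices of quotient spectra} and Remark~\ref{rem:Loc} one has $s_n\sN\cong\Sigma^n_TH_{\mathcal Z}\otimes_\Z A_n$ with $A_*:=\Z[\sS^c][\sS_0^{-1}]$ and $\mathcal Z$ equal to $\Z_{(p)}$, $\Z/p$ or $\Z$ respectively; in particular $\pi^\mu_0(\sN)=\mathcal Z$, and the associated slice spectral sequence (Propositions~\ref{prop:Conv1},~\ref{prop:Conv2}) has the form
\[
E_2^{p,q}(n)=H^{p-q}(Y,\mathcal Z(n-q))\otimes_\Z A_{-q}\ \Longrightarrow\ \sN^{p+q,n}(Y).
\]

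For convergence one checks that $\sN^{2r+s,r}(Y)=0$ for all $Y\in\Sm/k$, all $r\in\Z$ and all $s\ge1$: this holds for $\MGL$ by the Hopkins--Morel--Hoyois spectral sequence (cf.\ the proof of Theorem~\ref{slice spectral sequence for BP and BP<n>}), passes to $\MGL_p/(\sS)$ and $\MGL/(\sS)$ by Remark~\ref{rem:Converge}, and then to $\sN$ because inverting $\sS_0$ replaces these groups by filtered colimits of groups of the same shape while modding out by $p$ changes them only by a two-term extension; Proposition~\ref{prop:Conv2} then gives strong convergence. The essential point is the behaviour at a point $\eta\in Y\in\Sm/k$, where I take $Y=\eta=\Spec F$, $F=k(\eta)$ finitely generated over $k$ (the spectral sequence over $\Spec F$ being the filtered colimit of those over smooth affine models). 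Using the vanishing $H^j(F,\mathcal Z(m))=0$ for $j>m$ or $m<0$, together with $H^0(F,\mathcal Z(0))=\mathcal Z$ and $H^1(F,\mathcal Z(1))=F^\times\otimes\mathcal Z$ (resp.\ $F^\times/p$), one gets that $E_2^{p,q}(n)\ne0$ forces $q\le p\le n$. Hence on the line $p+q=2n$ only the corner $(p,q)=(n,n)$ survives, and on the line $p+q=2n+1$ in weight $n+1$ only $(p,q)=(n+1,n)$; moreover every potential source or target of a differential into or out of these corners is an $E_2$-term $H^j(F,\mathcal Z(m))\otimes_\Z A_{-q}$ with $j>m$ or $j<0$, hence zero. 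So the spectral sequence degenerates there with one-step filtrations, giving natural isomorphisms
\[
\sN^{2n,n}(\eta)\cong\mathcal Z\otimes_\Z A_{-n},\qquad\sN^{2n+1,n+1}(\eta)\cong H^1(F,\mathcal Z(1))\otimes_\Z A_{-n}.
\]
Condition (i) is then immediate, as $p_\eta^*$ is identified with the restriction $H^0(\Spec k,\mathcal Z(0))\to H^0(\Spec F,\mathcal Z(0))$ tensored with $A_{-n}$, i.e.\ the identity of $\mathcal Z\otimes_\Z A_{-n}$.

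For condition (ii) I would invoke that, Voevodsky's slice filtration being multiplicative, the slice spectral sequence of the $\MGL$-module $\sN$ is a module over the (likewise degenerate) slice spectral sequence of $\MGL$. The symbol of $t_\MGL(u)\in\MGL^{1,1}(\eta)$ is the class of $u$ in $E_\infty^{1,0}(1)=H^1(F,\Z(1))=F^\times$, the symbol of a class of $\sN^{2n,n}(\eta)$ lies in $E_\infty^{n,n}(n)=H^0(F,\mathcal Z(0))\otimes_\Z A_{-n}$, and on $E_\infty$ the module pairing is the cup product of motivic cohomology classes tensored with the unital action of $\L_0=\Z$ on $A_{-n}$. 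Therefore $t_\MGL(u)\cup(-)$ realizes on symbols the cup product with the class of $u$; letting $u$ run over $k(\eta)^\times$, the products $t_\MGL(u)\cup a$ then generate all of $H^1(F,\mathcal Z(1))\otimes_\Z A_{-n}=\sN^{2n+1,n+1}(\eta)$, so $k(\eta)^\times\otimes\sN^{2*,*}(\eta)\to\sN^{2*+1,*+1}(\eta)$ is surjective. This establishes (i) and (ii), hence geometric Landweber exactness for all three spectra.

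The step I expect to be the main obstacle is this last one: making precise that the cup products of Definition~\ref{def:GeomLE}(ii) — defined through the oriented $\MGL$-module structure on $\sN$ constructed in \S\ref{sec:modules} — are compatible with the slice filtration and induce on the associated graded the product of motivic cohomology classes together with the $\L_*$-module structure on $A_*$. Once this multiplicativity of the slice spectral sequence is in hand, conditions (i) and (ii) are essentially bookkeeping with the vanishing range of motivic cohomology of fields. A secondary technical point is the convergence of the slice spectral sequence for the non-effective spectra $\sN$ and for the essentially smooth scheme $\Spec k(\eta)$, handled as indicated above.
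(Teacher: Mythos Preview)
Your proposal is correct and follows essentially the same route as the paper: compute the slices via Theorem~\ref{thm:slices of effective spectra} and its corollaries, use Remark~\ref{rem:Converge} and the Hopkins--Morel--Hoyois vanishing for $\MGL$ to get strong convergence of the slice spectral sequence, and then read off $\sN^{2n,n}(\eta)$ and $\sN^{2n\pm1,n(\pm1)}(\eta)$ from the single surviving $E_2$-term at a point, using the standard computation of $H^0(\eta,\mathcal Z(0))$ and $H^1(\eta,\mathcal Z(1))$.

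The one place you differ is in your treatment of condition~(ii). The paper simply computes the abstract isomorphisms $\sN^{2n,n}(\eta)\cong\mathcal Z[\sS^c][\sS_0^{-1}]_n$ and $\sN^{2n-1,n}(\eta)\cong\mathcal Z[\sS^c][\sS_0^{-1}]_n\otimes k(\eta)^\times$ and then asserts that geometric Landweber exactness ``easily follows,'' referring to the proof of \cite[proposition~3.8]{LevineExact} for the degeneracy argument. You instead spell out why the cup product $t_\MGL(u)\cup(-)$ is identified, on the single surviving $E_\infty$-term, with the motivic cup product tensored with the $\L_0$-action on $A_*$, invoking multiplicativity of the slice filtration. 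This is exactly the content the paper is suppressing (or outsourcing to \cite{LevineExact}); your flagging it as the main technical point is accurate, and the argument you sketch is the right one. So there is no gap---you have simply been more explicit than the paper at the one step where the paper is terse.
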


\begin{proof} We discuss the cases $\MGL_p/(\sS)[\sS_0^{-1}]$ and
$\MGL_p/(\sS,p)[\sS_0^{-1}]$; the case  of $\MGL/(\sS)[\sS_0^{-1}]$ is exactly the
same. 

Let $A$ be a finitely generated abelian group and let $\eta$ be a point in some
$X\in \Sm/k$.  Then  the motivic cohomology $H^*(\eta, A(*))$ satisfies
\[
H^{2r}(\eta, A(r))= H^{2r+1}(\eta, A(r+1))=0
\]
for $r\neq0$, 
\[
H^0(\eta, A(0))=A,\quad H^1(\eta,A(1))=k(\eta)^\times\otimes_\Z A.
\]
We consider the  slice spectral sequences
\[
E_2^{p,q}(n):=H^{p-q}(\eta, \Z(n-q))\otimes
\Z[\sS^c][\sS_0^{-1}]_{-q}\Rightarrow (\MGL_p/(\sS)[\sS_0^{-1}])^{p+q,n}(\eta)
\]
and
\[
E_2^{p,q}(n):=H^{p-q}(\eta, \Z/p(n-q))\otimes
\Z[\sS^c][\sS_0^{-1}]_{-q}\Rightarrow
(\MGL_p/(\sS,p)[\sS_0^{-1}])^{p+q,n}(\eta)
\]
given by proposition~\ref{prop:Conv2}.  As in the proof of theorem~\ref{slice spectral sequence for BP and BP<n>}, $\MGL^{2n+a,n}_p(\eta)=0$ for $a>0$ and $n\in\Z$, and thus by remark~\ref{rem:Converge}, the convergence hypotheses in proposition~\ref{prop:Conv2} are satisfied. Thus, these spectral sequences are strongly convergent. As discussed in the proof of \cite[proposition 3.8]{LevineExact},   the only
non-zero $E_2$ term contributing to 
$(\MGL_p/(\sS)[\sS_0^{-1}])^{2n,n}(\eta)$ or to
$(\MGL_p/(\sS,p)[\sS_0^{-1}])^{2n,n}(\eta)$ is $E_2^{n,n}(n)$, the 
only non-zero $E_2$ term contributing to 
$(\MGL_p/(\sS)[\sS_0^{-1}])^{2n-1,n}(\eta)$ or  contributing to
$(\MGL_p/(\sS,p)[\sS_0^{-1}])^{2n-1,n}(\eta)$ is $E_2^{n,n-1}(n)$, and all
differentials entering or leaving these terms are zero.

This gives us isomorphisms
\begin{align*}
&(\MGL_p/(\sS)[\sS_0^{-1}])^{2n,n}(\eta)\cong \Z_{(p)}[\sS^c][\sS_0^{-1}]_n\\
&(\MGL_p/(\sS,p)[\sS_0^{-1}])^{2n,n}(\eta)\cong \Z/(p)[\sS^c][\sS_0^{-1}]_n\\
&(\MGL_p/(\sS)[\sS_0^{-1}])^{2n-1,n}(\eta)\cong
\Z_{(p)}[\sS^c][\sS_0^{-1}]_n\otimes k(\eta)^\times\\
&(\MGL_p/(\sS,p)[\sS_0^{-1}])^{2n,n}(\eta)\cong
\Z/(p)[\sS^c][\sS_0^{-1}]_n\otimes k(\eta)^\times
\end{align*}
from which it easily follows that $\MGL_p/(\sS)[\sS_0^{-1}]$ and
$\MGL_p/(\sS,p)[\sS_0^{-1}]$ are geometrically Landweber exact.
\end{proof} 

\begin{corollary} Let $S=\Spec k$, $k$ a field of characteristic zero.  Fix a
prime $p$ and let $\sN=$ $\MGL/(\sS)[\sS_0^{-1}]$, $\MGL_p/(\sS)[\sS_0^{-1}]$ or
$\MGL_p/(\sS,p)[\sS_0^{-1}]$,  let $(\sN', \sN)$ be the associated $(\MGL',
\MGL)$-module and $\sN'_*$ the geometric part of $\sN'_{**}$. Then the
classifying map
\[
\vartheta_{\sN'_*(k)}: \Omega_*^{\sN'_*(k)}\to \sN'_*
\]
is an isomorphism of $\Omega_*$-modules.
\end{corollary}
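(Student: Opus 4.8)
The plan is to deduce this corollary immediately from Theorem~\ref{thm:Rational} together with Proposition~\ref{prop:GeomLEQuot}, so the only real work is the bookkeeping needed to put oneself in the situation to which those two results apply.

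First I would observe that each of the three spectra $\sN$ is an $\MGL$-module in $\SH(k)$. The spectrum $\MGL/(\sS)[\sS_0^{-1}]$ is by construction an $\MGL$-module. The $p$-local spectra $\MGL_p/(\sS)[\sS_0^{-1}]$ and $\MGL_p/(\sS,p)[\sS_0^{-1}]$ are $\MGL_p$-modules by the constructions of \S\ref{sec:SlicesTruncatedBPSpectra}; since the localization map $\MGL\to \MGL_p$ is a map of weak commutative ring $T$-spectra, restriction along it makes every $\MGL_p$-module an $\MGL$-module, and the mod-$p$ reduction of an $\MGL_p$-module is again an $\MGL_p$-module. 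Hence in all three cases the associated oriented duality theory $(\MGL'_{**},\MGL^{**})$-module $(\sN'_{**},\sN^{**})$ of the subsection ``Modules for oriented ring spectra'' is defined, $\sN'_*$ is its geometric part, and the classifying map $\vartheta_{\sN'_*(k)}$ of the statement is precisely the map $\vartheta_{\MGL'_*/\sN'_*}$ furnished by Proposition~\ref{prop:Univ2} (equivalently, the $\Omega_*$-module map of Proposition~\ref{prop:universality}), which is a morphism of $\Omega_*$-modules by construction.

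Next I would invoke Proposition~\ref{prop:GeomLEQuot}, taking $\Char k=0$: this covers all three cases simultaneously, including the integral spectrum $\MGL/(\sS)[\sS_0^{-1}]$, and shows that each such $\sN$ is geometrically Landweber exact in the sense of Definition~\ref{def:GeomLE}. Finally, Theorem~\ref{thm:Rational} applies verbatim to an $\MGL$-module $\sN$ in $\SH(k)$, $k$ of characteristic zero, that is geometrically Landweber exact, and yields that $\vartheta_{\MGL'_*/\sN'_*}\colon \Omega_*^{\sN'_*(k)}\to \sN'_*$ is an isomorphism; by the identification above this is the asserted isomorphism.

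There is essentially no hard step: the substance is entirely contained in Theorem~\ref{thm:Rational} and Proposition~\ref{prop:GeomLEQuot}. The only point demanding a line of care is that the $\MGL$-module structure obtained by restriction along $\MGL\to \MGL_p$ induces exactly the oriented $(\MGL'_{**},\MGL^{**})$-module structure used to define $\sN'_*$ and its classifying map; this follows from the compatibility of the orientation on $\MGL_p$ with that on $\MGL$ and the functoriality/factorization statements in the remark following Theorem~\ref{thm:Rational}, so one can simply cite that remark rather than re-derive anything.
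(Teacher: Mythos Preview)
Your proof is correct and matches the paper's approach: the paper simply says the corollary follows directly from Proposition~\ref{prop:GeomLEQuot}, with Theorem~\ref{thm:Rational} implicit in the background. Your additional bookkeeping about restricting the $\MGL_p$-module structure along $\MGL\to\MGL_p$ is a reasonable elaboration of what the paper leaves unsaid.
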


This follows directly from proposition~\ref{prop:GeomLEQuot}. As immediate
consequence, we have

\begin{corollary} Let $S=\Spec k$, $k$ a field of characteristic zero. Fix a
prime $p$ and let $\sN=$ $\BP$, $\BP\langle n\rangle$, $E(n)$,  $k(n)$ or
$K(n)$, let $(\sN', \sN)$ be the associated $(\MGL', \MGL)$-module and $\sN'_*$
the geometric part of $\sN'_{**}$. Then the classifying map
\[
\vartheta_{\sN'_*(k)}: \Omega_*^{\sN'_*(k)}\to \sN'_*
\]
is an isomorphism of  $\Omega_*$-modules. In case $\sN=\BP$ or $E(n)$,
$\vartheta_{\sN'_*(k)}$ is compatible with external products.
\end{corollary}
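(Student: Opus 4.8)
The plan is to recognize each of the five spectra as one of the quotients and localizations of $\MGL_p$ treated in the preceding corollary, and then simply to quote that corollary. First I would unwind definitions~\ref{BP, BP<n> and E(n)} and \ref{k(n) and K(n)}: one has $\BP=\MGL_p/(\sS)$ with $\sS=\{i\ge1:i\neq p^k-1\text{ for all }k\ge1\}$; $\BP\langle n\rangle=\MGL_p/(\sS)$ with $\sS=\{i\ge1:i\neq p^k-1\text{ for }1\le k\le n\}$; $E(n)=\MGL_p/(\sS)[\sS_0^{-1}]$ with this same $\sS$ and $\sS_0=\{x_{p^n-1}\}$; $k(n)=\MGL_p/(\sS,p)$ with $\sS=\{i\ge1:i\neq p^n-1\}$; and $K(n)=\MGL_p/(\sS,p)[\sS_0^{-1}]$ with this last $\sS$ and $\sS_0=\{x_{p^n-1}\}$ (taking $\sS_0=\emptyset$ in the non-localized cases, and using remark~\ref{rem:DoubleQuot} to read the ``mod $p$'' in $k(n)$ and $K(n)$ as an iterated quotient). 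The only point requiring a check here is that in the two localized cases the inverted element lies in the polynomial subring on the remaining generators: for $E(n)$ one has $\Z[\sS^c]=\Z[x_{p-1},x_{p^2-1},\dots,x_{p^n-1}]$, and for $K(n)$ one has $\Z[\sS^c]=\Z[x_{p^n-1}]$, so in both cases $\sS_0=\{x_{p^n-1}\}$ is a set of homogeneous elements of $\Z[\sS^c]$, exactly as the hypotheses of the preceding corollary require.

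Granting this, since $k$ has characteristic zero the preceding corollary applies to each of the five cases and gives that the classifying map $\vartheta_{\sN'_*(k)}\colon\Omega_*^{\sN'_*(k)}\to\sN'_*$ is an isomorphism of $\Omega_*$-modules; this is the first assertion.

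For the product statement I would argue as follows. When $\sN=\BP$ or $E(n)$, lemma~\ref{lem:effectivity of various BP-related quotients} equips $\sN$ with the structure of an oriented weak commutative ring $T$-spectrum (via Landweber exactness), so the geometric part $\sN'_*$ of the associated oriented $(\MGL'_{**},\MGL^{**})$-module $(\sN'_{**},\sN^{**})$ inherits external products together with a unit $1\in\sN'_0(k)$; and, the orientation on $\sN$ being the one classified by $\MGL$, these products are compatible with those of $\MGL'_*$ in the sense of remark~\ref{rem:products}. Applying that remark with $(H,A)=(\MGL'_{**},\MGL^{**})$ and $(J,B)=(\sN'_{**},\sN^{**})$ then yields that $\vartheta_{\sN'_*(k)}$ is compatible with external products, as claimed; equivalently, in these two cases $\vartheta_{\sN'_*(k)}$ agrees with the classifying map $\bar\vartheta_{\sN'_*}$ of \cite[theorem 6.2]{LevineExact}, whose compatibility with products is recorded there. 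I do not expect any real obstacle beyond the indexing bookkeeping; the only genuine verification is the membership $x_{p^n-1}\in\Z[\sS^c]$ for $\sN=E(n),K(n)$ noted above, and everything of substance has already been carried out in proposition~\ref{prop:GeomLEQuot}, theorem~\ref{thm:Rational}, and the preceding corollary.
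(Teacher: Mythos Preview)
Your proposal is correct and matches the paper's approach. The paper states this corollary as ``an immediate consequence'' of the preceding corollary, with no further argument; you have simply spelled out the identification of each of the five spectra with the appropriate $\MGL_p/(\sS)[\sS_0^{-1}]$ or $\MGL_p/(\sS,p)[\sS_0^{-1}]$, and for the product statement you invoke exactly the ingredients (lemma~\ref{lem:effectivity of various BP-related quotients} for the ring structure on $\BP$ and $E(n)$, then remark~\ref{rem:products}) that the paper points to in the remark immediately following the corollary.
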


\begin{remark}  Suppose that the theory with supports $\sN^{2*,*}$ has products
and a unit, compatible with its $\MGL^{2*,*}$-module structure. Then by
remark~\ref{rem:products}, the classifying map $\vartheta_{\sN'_*(k)}$ is also
compatible with products.

In the case of a quotient $\sE$ of $\MGL$ or $\MGL_p$ by subset $\{x_i:i\in I\}$
of the set of polynomial generators, the vanishing of $\MGL^{2r+s, r}(k)$ for
$s>0$ shows that $\sE^{2*,*}(k)=\MGL^{2*,*}(k)/(\{x_i:i\in I\})$, which has the
evident ring structure induced by the natural $\MGL^{2*,*}(k)$-module structure.
Thus, the rational theory $\Omega_*^{\sE_*(k)}$ has a canonical structure of an
oriented Borel-Moore homology theory on $\Sch/k$; the same holds for $\sE$ a
localization of this type of quotient. The fact that the classifying
homomorphism $\vartheta_{\sE}:\Omega_*^{\sE'_*(k)}\to \sE_*'$ is an isomorphism
induces on $\sE'_*$ the structure of an oriented Borel-Moore homology theory on
$\Sch/k$; it appears to be unknown if this arises from a multiplicative
structure on the spectrum level.
\end{remark}

\end{document}